\numberwithin{equation}{section}
\newcommand{\qand}{\quad\text{and}\quad}
\theoremstyle{plain}
\newtheorem{maintheorem}{Theorem}
\newtheorem{maincorollary}[maintheorem]{Corollary}
\newtheorem{theorem}{Theorem}[section]
\newtheorem{proposition}[theorem]{Proposition}
\newtheorem{corollary}[theorem]{Corollary}
\newtheorem{lemma}[theorem]{Lemma}
\theoremstyle{definition}
\newtheorem{remark}[theorem]{Remark}
\newtheorem{conjecture}{Conjecture}
\newcommand{\RR}{{\mathbb R}}
\newcommand{\ZZ}{{\mathbb Z}}
\newcommand{\EE}{{\mathbb E}}
\newcommand{\DD}{{\mathbb D}}
\newcommand{\sS}{{\mathbb S}}
\newcommand{\ov}{\overline}
\newcommand{\vfi}{\varphi}
\renewcommand{\epsilon}{\varepsilon}
\newcommand{\dist}{\operatorname{dist}}
\newcommand{\closure}{\operatorname{clos}}
\newcommand{\interior}{\operatorname{int}}
\newcommand{\diag}{\operatorname{diag}}
\newcommand{\Leb}{\operatorname{Leb}}
\newcommand{\Lip}{\operatorname{Lip}}
\newcommand{\sing}{\operatorname{Sing}}
\newcommand{\per}{\operatorname{Per}}
\newcommand{\crit}{\operatorname{Crit}}
\newcommand{\supp}{\operatorname{supp}}
\newcommand{\spec}{\operatorname{Spec}}
\newcommand{\cF}{\EuScript{F}}
\newcommand{\cP}{\EuScript{P}}
\newcommand{\cO}{\EuScript{O}}
\newcommand{\V}{\EuScript{V}}
\newcommand{\U}{\EuScript{U}}
\newcommand{\X}{\EuScript{X}}
\newcommand \cD {{\mathcal D}}
\newcommand \cL {{\mathcal L}}
\newcommand{\m}{{\rm Leb}\:}
\author[Vitor Araujo]{Vitor Araujo}
\email{vitor.araujo.im.ufba@gmail.com or vitor.d.araujo@ufba.br}
\urladdr{https://sites.google.com/site/vdaraujo99/}
\address{Instituto de Matemática e Estat\'{\i}stica,
  Universidade Federal da Bahia, Av. Ademar de Barros s/n,
  40170-110 Salvador, Brazil.}
\keywords{singular-hyperbolicity, physical/SRB measures,
  Lorenz-like attracting sets, Lorenz-like equilibria}
\subjclass[2010]{Primary: 37D25. Secondary: 37D30, 37D20.}
\date{\today}
\title[Number of ergodic physical measures of
  singular-hyperbolic attracting sets]
{On the number of ergodic physical/SRB measures of
  singular-hyperbolic attracting sets}
\thanks{The author was partially supported by
  CNPq-Brazil (grant 300985/2019-3).}
\begin{document}

\begin{abstract}
  It is known that sectional-hyperbolic attracting sets, for a $C^2$
  flow on a finite dimensional compact manifold, have at most finitely
  many ergodic physical invariant probability measures.  We prove an
  upper bound for the number of distinct ergodic physical measures
  supported on a connected singular-hyperbolic attracting set for a
  $3$-flow. This bound depends only on the number of Lorenz-like
  equilibria contained in the attracting set. Examples of
  singular-hyperbolic attracting sets are provided showing that the
  bound is sharp.
\end{abstract}


\maketitle

\tableofcontents

\section{Introduction}
\label{sec:intro}In 1963, the meteorologist Edward Lorenz
published in the Journal of Atmospheric Sciences \cite{Lo63}
an example of a parametrized polynomial system of
differential equations
\begin{align}
  \label{e-Lorenz-system}
\dot X &= a(Y - X) &\quad a &=10
 \nonumber \\
\dot Y &= rX -Y -XZ &\text{where}\qquad\qquad b &=8/3
\\
\dot Z &= XY - bZ &\quad  r &=28
 \nonumber
\end{align}
as a very simplified model for thermal fluid convection,
motivated by an attempt to understand the foundations of
weather forecast.
Numerical simulations performed by Lorenz for an open
neighborhood of the chosen parameters suggested that almost
all points in phase space tend to a \emph{chaotic attractor},
whose well known ``butterfly''  picture can be easily found in the
literature.

The mathematical study of these equations began with the geometric
Lorenz flows, introduced independently by Afraimovich-Bykov-Shil'nikov
\cite{ABS77} and Guckenheimer-Williams \cite{GW79,Wil79} as an
abstraction of the numerically observed features of solutions to
(\ref{e-Lorenz-system}). Tucker \cite{Tucker} showed that the maximal
invariant subset of the classical Lorenz equations
\eqref{e-Lorenz-system} is in fact a geometric Lorenz attractor
through a computer assisted proof.  For more on the rich history of
the study of this system of equations, the reader can consult
\cite{viana2000i, AraPac2010}.

The geometric Lorenz attractor is the most representative example of
the class of singular-hyperbolic flows, an extension of the notion of
(uniform) hyperbolicity encompassing invariant sets with equilibria
accumulated by regular orbits inside the set \cite{MPP99,
  MPP04}. Singular-hyperbolic attracting sets were shown to be
sensitive to initial conditions, to posses finitely many ergodic
physical measures with full basin and with strong statistical
properties; see e.g. \cite{APPV,AMV15,ArMel17,ArMel18}.

Arguably one of the most important concepts in Dynamical Systems
theory is the notion of physical (or $SRB$) measure. We say that an
invariant probability measure $\mu$ for a flow $\phi_t$ induced by a
vector field $G$ is \emph{physical} if the set
\[
B(\mu)=\left\{z\in M:
\lim_{t\to\infty}\frac{1}{t}\int_{0}^{t}\psi(\phi_s(z))\,ds=
\int\psi\, d\mu, \forall \psi\in C^0(M,\RR)\right\}
\]
has non-zero volume, with respect to any volume form on the
ambient compact manifold $M$. The set $B(\mu)$ is by
definition the (ergodic) \emph{basin} of $\mu$. It is assumed that
time averages of these orbits be observable if the flow
models a physical phenomenon.

The study of the existence of these special measures and their
statistical properties for uniformly hyperbolic diffeomorphisms and
flows has a long and rich history, starting with the works of Sinai,
Ruelle and Bowen \cite{Bo75,BR75,Ru76,Ru78,Si72} which were inspired
in the work of Anosov~\cite{An67}.  Hyperbolic attractors (transitive
basic pieces of the Smale ``Spectral Decomposition'') admit a unique
physical measure, as well as transitive Anosov flows, as long as the
smoothness of the underlying vector field is at least $C^2$. For
non-transitive Anosov flows in $3$-manifolds, constructed by Franks
and Williams~\cite{FranksWilliams80}, there are finitely many possible
ergodic physical measures after Brunella~\cite{Brunella93}.

It is well-known that singular-hyperbolic \emph{attractors} (that is,
containing a dense forward regular orbit of the flow) supports a
unique ergodic physical/SRB measure whose basin covers an open
neighborhood the attractor, except for a subset of Lebesgue measure
(volume) zero; see~\cite{APPV}. Analogously singular-hyperbolic
attracting sets (possibly without a transitive trajectory) support
finitely many ergodic physical/SRB measures whose basins together
cover an open neighborhood the attractor, again with the exception of
a zero volume subset; see e.g. \cite{araujo_2021} and references
therein.

In this work, we study the number of distinct ergodic physical
measures supported on a singular-hyperbolic attracting sets and
provide an upper bound for this number, depending only on the number
of Lorenz-like equilibria contained in the attracting set. Examples of
singular-hyperbolic attracting sets are provided showing that the
bound is sharp.

\subsection{Definitions and statements of results}
\label{sec:statements}

Let $M$ be a compact connected Riemannian manifold with dimension
$\dim M=m$, induced distance $d$ and volume form $\m$. Let $\X^r(M)$,
$r\ge1$, be the set of $C^r$ vector fields on $M$ and denote by
$\phi_t$ the flow generated by $G\in\X^r(M)$. For any given subset
$S\subset M$ we denote by $\closure{S}$ the topological closure of
$S$.

\subsubsection{Sectional-hyperbolic attracting sets}
\label{sec:PH}

An \emph{invariant set} $\Lambda$ for the flow $\phi_t$ is a
subset of $M$ which satisfies $\phi_t(\Lambda)=\Lambda$ for
all $t\in\RR$.  Given a compact invariant set $\Lambda$ for
$G\in \X^r(M)$, we say that $\Lambda$ is \emph{isolated} if
there exists an open set $U\supset \Lambda$ such that
$ \Lambda =\bigcap_{t\in\RR}\closure{\phi_t(U)}$.  If $U$ can
be chosen so that $\closure{\phi_t(U)}\subset U$ for all
$t>0$, then we say that $\Lambda$ is an \emph{attracting
  set} and $U$ a \emph{trapping region} (or \emph{isolated
  neighborhood}) for
$\Lambda=\Lambda_G(U)=\cap_{t>0}\closure{\phi_t(U)}$.

For a compact invariant set $\Lambda$, we say that $\Lambda$ is {\em
  partially hyperbolic} if the tangent bundle over $\Lambda$ can be
written as a continuous $D\phi_t$-invariant sum
$ T_\Lambda M=E^s\oplus E^{cu}, $ where $d_s=\dim E^s_x\ge1$ and
$d_{cu}=\dim E^{cu}_x\ge2$ for $x\in\Lambda$, and there exist
constants $C>0$, $\lambda\in(0,1)$ such that for all $x \in \Lambda$,
$t\ge0$, we have
\begin{itemize}
\item \emph{uniform contraction along} $E^s$:
  $ \|D\phi_t | E^s_x\| \le C \lambda^t; $ and
\item \emph{domination of the splitting}:
  $ \|D\phi_t | E^s_x\| \cdot \|D\phi_{-t} | E^{cu}_{\phi_tx}\|
  \le C \lambda^t.  $
\end{itemize}
We say that $E^s$ is the \emph{stable bundle} and $E^{cu}$
the \emph{center-unstable bundle}.  A {\em partially
  hyperbolic attracting set} is a partially hyperbolic set
that is also an attracting set.

We say that the center-unstable bundle $E^{cu}$ is
\emph{volume expanding} if there exists $K,\theta>0$ such
that $|\det(D\phi_t| E^{cu}_x)|\geq K e^{\theta t}$ for all
$x\in \Lambda$, $t\geq 0$. More generally, $E^{cu}$ is {\em
  sectional expanding} if for every two-dimensional subspace
$P_x\subset E^{cu}_x$,
\begin{align} \label{eq:sectional} |\det(D\phi_t(x)\mid P_x
  )| \ge K e^{\theta t}\quad\text{for all $x \in \Lambda$,
    $t\ge0$}.
\end{align}

If $\sigma\in M$ and $G(\sigma)=0$, then $\sigma$ is called an {\em
  equilibrium} or \emph{singularity} in what follows and we denote by
$\sing(G)$ the family of all such points. We say that a singularity
$\sigma\in\sing(G)$ is \emph{hyperbolic} if all the eigenvalues of
$DG(\sigma)$ have non-zero real part.

A point $p\in M$ is \emph{periodic} for the flow $\phi_t$ generated by
$G$ if $G(p)\neq\vec0$ and there exists $\tau>0$ so that
$\phi_\tau(p)=p$; its orbit
$\cO_G(p)=\phi_{\RR}(p)=\phi_{[0,\tau]}(p)$ is a \emph{periodic
  orbit}, an invariant simple closed curve for the flow. The family of
periodic orbits of $G$ is written $\per(G)$.

The \emph{critical elements} $\crit(G)$ of a vector field $G$ are its
equilibria and periodic orbits, that is,
$\crit(G)=\sing(G)\cup\per(G)$.  An invariant set is \emph{nontrivial}
if it is not a critical element of the vector field.

We say that a compact invariant set $\Lambda$ is a
\emph{sectional hyperbolic set} if $\Lambda$ is partially
hyperbolic with sectional expanding center-unstable bundle
and all equilibria in $\Lambda$ are hyperbolic.  A sectional
hyperbolic set which is also an attracting set is called a
{\em sectional hyperbolic attracting set}.

A \emph{singular hyperbolic set} is a compact invariant set
$\Lambda$ which is partially hyperbolic with volume
expanding center-unstable subbundle and all equilibria
within the set are hyperbolic. A sectional hyperbolic set is
singular hyperbolic and both notions coincide if, and only
if, $d_{cu}=2$. In what follows, singular-hyperbolicity implicity
means that $d_{cu}=2$.

\begin{lemma}[Hyperbolic Lemma]{\cite[Lemma 3]{MPP99}
  (or~\cite[Proposition 6.2]{AraPac2010})}
  \label{le:hyplemma}
  A sectional hyperbolic set with no equilibria is a \emph{(uniformly)
    hyperbolic set}.
\end{lemma}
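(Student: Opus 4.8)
The plan is to produce the hyperbolic splitting $T_\Lambda M = E^s\oplus\langle G\rangle\oplus E^u$, where the flow direction $\langle G\rangle$ is the one-dimensional neutral bundle, $E^s$ is the given contracting bundle, and $E^u\subset E^{cu}$ is a new $D\phi_t$-invariant bundle of dimension $d_{cu}-1$ that must be constructed and shown to be uniformly expanding.

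First I would record the two elementary facts that make the flow direction available. Since $\Lambda$ contains no equilibria, $G(x)\neq\vec0$ on $\Lambda$ and, by compactness, $\inf_\Lambda\|G\|>0$ and $\sup_\Lambda\|G\|<\infty$; thus $\langle G\rangle$ is a continuous, $D\phi_t$-invariant, \emph{neutral} line bundle, in the sense that $\|D\phi_t G(x)\|=\|G(\phi_t x)\|$ stays uniformly bounded away from $0$ and $\infty$. Next I would show $G(x)\in E^{cu}_x$. Writing $G(x)=v^s+v^{cu}$ with $v^s\in E^s_x$ and $v^{cu}\in E^{cu}_x$, if $v^s\neq0$ then applying $D\phi_{-t}$ and using the uniform contraction of $E^s$ in forward time (hence expansion in backward time) forces $\|D\phi_{-t}v^s\|\to\infty$; as the angle between $E^s$ and $E^{cu}$ is bounded below on the compact invariant set, this makes $\|G(\phi_{-t}x)\|\to\infty$, contradicting the bound above. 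Hence $v^s=0$ and $G\subset E^{cu}$.

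The core of the argument is the passage to the linear Poincaré flow. For regular $x$ let $N_x=\langle G(x)\rangle^\perp$, let $\pi_x$ be the orthogonal projection onto $N_x$, and set $P_t=\pi_{\phi_t x}\circ D\phi_t\colon N_x\to N_{\phi_t x}$. Because $G\subset E^{cu}$ and $G\notin E^s$, the projections $N^s=\pi(E^s)$ and $N^{cu}=\pi(E^{cu})$ give a $P_t$-invariant splitting $N=N^s\oplus N^{cu}$ with $\dim N^s=d_s$ and $\dim N^{cu}=d_{cu}-1$. Uniform contraction of $E^s$ together with bounded angles yields $\|P_t|N^s\|\le C'\lambda^t$, so $N^s$ is contracted. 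For $N^{cu}$ I would invoke sectional expansion: for a unit $n\in N^{cu}_x$ the plane $P_x=\langle G(x),n\rangle\subset E^{cu}_x$ satisfies $|\det(D\phi_t|P_x)|=\frac{\|G(\phi_t x)\|}{\|G(x)\|}\,\|P_t n\|$, since the component of $D\phi_t n$ orthogonal to $G(\phi_t x)$ is exactly $P_t n$. Combined with $|\det(D\phi_t|P_x)|\ge Ke^{\theta t}$ and the uniform bounds on $\|G\|$, this gives $\|P_t n\|\ge K'e^{\theta t}$. Thus every normal direction in $N^{cu}$ is uniformly expanded and the linear Poincaré flow is hyperbolic over $\Lambda$.

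Finally I would promote hyperbolicity of $P_t$ to hyperbolicity of $\phi_t$ by reconstructing the invariant unstable bundle inside $E^{cu}$. Decomposing $E^{cu}_x=\langle G(x)\rangle\oplus N^{cu}_x$, I would seek $E^u_x$ as the graph of a bounded linear map $L_x\colon N^{cu}_x\to\langle G(x)\rangle$ fixed by the invariance condition $D\phi_t(\operatorname{graph}L_x)=\operatorname{graph}L_{\phi_t x}$. Since $N^{cu}$ is uniformly expanded while $\langle G\rangle$ is neutral, so that $N^{cu}$ dominates $\langle G\rangle$, the associated graph transform is a uniform contraction on the space of such sections, producing a unique continuous invariant $L$ and hence a continuous $D\phi_t$-invariant bundle $E^u$ transverse to $\langle G\rangle$; its normal projection is the expanding $N^{cu}$ and its flow-component is uniformly controlled, so $E^u$ is uniformly expanding. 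Together with $E^s$ and $\langle G\rangle$ this furnishes the required hyperbolic splitting. I expect the main obstacle to be exactly this last reconstruction: sectional expansion controls only two-dimensional areas in $E^{cu}$, and the orthogonal complement of $G$ in $E^{cu}$ is not $D\phi_t$-invariant, so converting area expansion into a genuine invariant bundle that expands every vector, uniformly and for arbitrary $d_{cu}\ge2$, is where the neutrality of $G$ and the domination must be exploited most carefully.
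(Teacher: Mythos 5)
The paper does not prove this lemma---it is quoted from \cite[Lemma 3]{MPP99} and \cite[Proposition 6.2]{AraPac2010}---and your argument is correct and follows essentially the same standard route as those references: the flow direction lies in $E^{cu}$ and is neutral by compactness, sectional expansion applied to planes $\langle G(x),n\rangle$ yields uniform expansion of the linear Poincar\'e flow on $N^{cu}=\pi(E^{cu})$, and a domination/graph-transform argument (Doering's criterion) recovers the invariant bundle $E^u\subset E^{cu}$. The one step you rightly flag as delicate---that the complement of $\langle G\rangle$ in $E^{cu}$ is not $D\phi_t$-invariant, so the graph transform is only an \emph{affine} contraction whose constant term comes from the off-diagonal component of $D\phi_{t_0}$---is harmless because for a fixed large time $t_0$ that term is bounded by continuity on the compact set $\Lambda$, so the fixed point exists, is continuous, and its expansion follows from that of $P_{t}$ together with the bound on the graph sections.
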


This lemma precisely means that, in our setting, the central unstable
subbundle admits a splitting $E^{cu}_x=\RR\{G(x)\}\oplus E^u_x$ for
all $x\in\Lambda$ (so that $d_{cu}=1+d_u$ with $d_u=\dim E^u$) where
$E^u_x$ is uniformly contracting under the time reversed flow; see
e.g.~\cite{AraPac2010}. That is, $\Lambda$ is a (uniformly)
\emph{hyperbolic set if} $T_\Lambda M=E^s\oplus\RR\{G\}\oplus E^u$,
where $E^s$ is uniformly contracting, $\RR\{G\}$ is the
one-dimensional subspace along the direction of the vector field $G$,
and we have for the same $C,\lambda$ used in the uniform contraction
of $E^s$
\begin{itemize}
\item \emph{uniform expansion (backward contraction) along} $E^u$:
  $ \|D\phi_{-t} | E^u_x\| \le C \lambda^t$ forall $x\in\Lambda$ and
   $t\ge0$.
\end{itemize}

A \emph{periodic orbit $\cO_G(p)$ is hyperbolic} if $\cO_G(p)$ is a
hyperbolic subset for $G$. If moreover $E^u$ is trivial (i.e.
$E^u_q=\{\vec0\}, q\in\cO_G(p)$), then the periodic orbit is a
\emph{periodic sink}.

\begin{remark}\label{rmk:noisol}
  A singular hyperbolic attracting set cannot contain isolated
  periodic orbits.  For otherwise such orbit must be a periodic sink,
  contradicting volume expansion.
\end{remark}

We recall that a subset $\Lambda \subset M$ is \emph{transitive} if it
has a full dense orbit, that is, there exists $x\in \Lambda$ such that
$\closure{\{\phi_tx:t\ge0\}}=\Lambda= \closure{\{\phi_tx:t\le0\}}$.

A nontrivial transitive sectional hyperbolic attracting set is a
\emph{sectional hyperbolic attractor}.

The prototype of a sectional-hyperbolic attractor for $3$-flows is the
Lorenz attractor; see e.g. \cite{Lo63,Tu99,AraPac2010}. For higher
dimensional flows we have the multidimensional Lorenz attractor; see
\cite{BPV97}. More examples are provided in
Section~\ref{sec:geompropert} and many more in~\cite{Morales07}.

\subsubsection{Invariant manifolds}
\label{sec:invari-manifolds-non}

An embedded disk $\gamma\subset M$ is a (local) {\em
  strong-unstable manifold}, or a {\em strong-unstable
  disk}, if $\dist(\phi_{-t}(x),\phi_{-t}(y))$ tends to zero
exponentially fast as $t\to+\infty$, for every
$x,y\in\gamma$. In the same way $\gamma$ is called a (local)
{\em strong-stable manifold}, or a {\em strong-stable disk},
if $\dist(\phi_{t}(x),\phi_{t}(y))\to0$ exponentially fast as
$n\to+\infty$, for every $x,y\in\gamma$.

It is well-known that there exists $\epsilon_0>0$ so that every point
in a hyperbolic set possesses a local strong-stable manifold
$W_{loc}^{ss}(x)$ and a local strong-unstable manifold
$W_{loc}^{uu}(x)$ which are disks tangent to $E^s_x$ and $E^u_x$ at
$x$ respectively with topological dimensions $d_s=\dim(E^s)$ and
$d_u=\dim(E^u)$ and inner radius $\epsilon_0$; see e.g. \cite[Chap.
6]{fisherHasselblatt12}. It is common to write
$W^*_\epsilon(x), *=ss,uu$ for the corresponding local manifolds with
inner radius $\epsilon$.

Considering the action of the flow we get the (global)
\emph{strong-stable manifold}
$$W^{ss}(x)=\bigcup_{t>0}
\phi_{-t}\Big(W^{ss}_{loc}\big(\phi_t(x)\big)\Big)$$ and the (global)
\emph{strong-unstable manifold}
$$W^{uu}(x)=\bigcup_{t>0}\phi_{t}\Big(W^{uu}_{loc}\big(\phi_{-t}(x)\big)\Big)$$
for every point $x$ of a uniformly hyperbolic set. Similar notions are
defined in a straightforward way for diffeomorphisms. These are
immersed submanidfolds with the same differentiability of the flow or
the diffeomorphism.

In the case of a flow we also consider the \emph{stable
  manifold} $W^s(x)=\cup_{t\in\RR} \phi_{t}\big(W^{ss}(x)\big)$
and \emph{unstable manifold}
$W^u(x)=\cup_{t\in\RR}\phi_{t}\big(W^{uu}(x)\big)$ for $x$ in a
uniformly hyperbolic set, which are flow invariant.

We note that these notions are well defined for a hyperbolic
periodic orbit, since this compact set is itself a
hyperbolic set. Since all periodic orbits in a singular-hyperbolic set
are hyperbolic, then these manifolds also exist in this setting.

In general, (local) stable manifolds exist for every point of a
singular-hyperbolic set due to partial hyperbolicity; see
Subsection~\ref{sec:existence-stable-fol} and~\cite{ArMel17}.

\subsubsection{Singularities in singular-hyperbolic attracting sets}
\label{sec:singul-singul-hyperb}

\begin{proposition}{\cite[Proposition
    2.1]{araujo_2021}} \label{prop:generaLorenzlike} Let $\Lambda$ be
  a sectional hyperbolic attracting set and let $\sigma\in\Lambda$ be
  an equilibrium.  If there exists $x\in\Lambda\setminus\{\sigma\}$ so
  that $\sigma\in\omega(x)\cup\alpha(x)$, then $\sigma$ is
  \emph{generalized Lorenz-like}: that is, $DG(\sigma)|E^{cu}_\sigma$
  has a real eigenvalue $\lambda^s$ and
  $\lambda^u=\inf\{\Re(\lambda):\lambda\in\spec(DG(\sigma)),
  \Re(\lambda)\ge0\}$ satisfies $-\lambda^u<\lambda^s<0<\lambda^u$ and
  so the index of $\sigma$ is $\dim E^s_\sigma=d_s+1$.
\end{proposition}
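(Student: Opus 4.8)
The plan is to read off the Lorenz-like inequalities directly from the linearization $A:=DG(\sigma)$, using three ingredients: the invariance of the partially hyperbolic splitting at $\sigma$, the constraint imposed by sectional expansion on $E^{cu}_\sigma$, and the fact that a regular orbit of $\Lambda$ can accumulate on $\sigma$ only if $E^{cu}_\sigma$ contains a contracting direction. I would begin by recording the eigenvalue structure forced by partial hyperbolicity. Since $E^s$ and $E^{cu}$ are $D\phi_t$-invariant and $G(\sigma)=\vec0$, the splitting $T_\sigma M=E^s_\sigma\oplus E^{cu}_\sigma$ is $A$-invariant, so $\spec(A)$ is the disjoint union of the spectra of $A|E^s_\sigma$ and $A|E^{cu}_\sigma$. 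Uniform contraction along $E^s$ gives $\Re(\mu)<0$ for every $\mu\in\spec(A|E^s_\sigma)$, while domination forces every eigenvalue on $E^s_\sigma$ to have strictly smaller real part than every eigenvalue on $E^{cu}_\sigma$. Thus $E^s_\sigma$ is exactly the sum of the $d_s$ eigenspaces of most negative real part.

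Next I would extract the constraint from sectional expansion. Taking $\mu,\nu$ to be the two eigenvalues of $A|E^{cu}_\sigma$ of least real part (counted with multiplicity) and $P\subset E^{cu}_\sigma$ the plane they span, the exponential rate of $|\det(D\phi_t|P)|$ at $\sigma$ equals $\Re(\mu)+\Re(\nu)$; since $|\det(D\phi_t|P)|\ge Ke^{\theta t}$, this sum is $\ge\theta>0$. Two consequences follow: at most one eigenvalue of $A|E^{cu}_\sigma$ can have negative real part, and if one does it must be real, since a non-real one would force its complex conjugate to be a second eigenvalue of the same negative real part. Denote such an eigenvalue, when present, by $\lambda^s$; all remaining eigenvalues on $E^{cu}_\sigma$ then have positive real part.

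The heart of the argument, and the step I expect to be the main obstacle, is to prove that $A|E^{cu}_\sigma$ does have a (negative, hence real) eigenvalue, i.e. that $\sigma$ has index $d_s+1$ rather than $d_s$. I would argue by contradiction: if every eigenvalue of $A|E^{cu}_\sigma$ had positive real part, then the full local stable manifold of $\sigma$ would coincide with the strong-stable manifold $W^{ss}_{loc}(\sigma)$ tangent to $E^s_\sigma$. Treat first the case $\sigma\in\omega(x)$. The elementary fact driving the contradiction is that the flow direction lies in the center-unstable bundle, $\RR G(y)\subset E^{cu}_y$ for every regular $y\in\Lambda$, so by continuity of $E^{cu}$ every accumulation direction of $G(\phi_{t_n}x)/\|G(\phi_{t_n}x)\|$ with $\phi_{t_n}x\to\sigma$ lies in $E^{cu}_\sigma$. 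On the other hand, choosing the $\phi_{t_n}x\to\sigma$ to be deepest-approach points of the excursions of the orbit near the hyperbolic saddle $\sigma$, the local (Hartman--Grobman) analysis shows that $\phi_{t_n}x-\sigma$ has comparable components along $E^s_\sigma$ and $E^{cu}_\sigma$; since $A$ preserves the splitting and is bounded invertible, $G(\phi_{t_n}x)\approx A(\phi_{t_n}x-\sigma)$ retains a definite $E^s_\sigma$-component, contradicting that its normalized limit lies in $E^{cu}_\sigma$. This forces a contracting direction inside $E^{cu}_\sigma$. The case $\sigma\in\alpha(x)$ reduces to this one: when $\alpha(x)\ne\{\sigma\}$ one locates inside the compact invariant set $\alpha(x)\subset\Lambda$ a regular orbit with deep excursions toward $\sigma$ and repeats the excursion argument in backward time, while the remaining possibility $\alpha(x)=\{\sigma\}$, i.e. $x\in W^u(\sigma)$, is excluded because a sectional-hyperbolic attracting set cannot contain a singularity with purely expanding center-unstable subspace; ruling out such a $d_{cu}$-dimensional unstable manifold sitting inside the attracting set is the delicate point here.

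Finally I would assemble the inequalities. With $\lambda^s<0$ the unique negative eigenvalue on $E^{cu}_\sigma$, all other eigenvalues there positive, and all of $\spec(A|E^s_\sigma)$ negative, the quantity $\lambda^u=\inf\{\Re(\lambda):\lambda\in\spec(A),\ \Re(\lambda)\ge0\}$ equals the least positive real part occurring on $E^{cu}_\sigma$. Sectional expansion applied to the plane spanned by the $\lambda^s$- and $\lambda^u$-eigendirections gives $\lambda^s+\lambda^u\ge\theta>0$, that is $-\lambda^u<\lambda^s<0<\lambda^u$. Counting the eigenvalues of negative real part, namely the $d_s$ coming from $E^s_\sigma$ together with the single $\lambda^s$ from $E^{cu}_\sigma$, shows that the index of $\sigma$ is $d_s+1$, which completes the proof.
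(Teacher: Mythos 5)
The paper does not prove this proposition: it is imported verbatim from \cite[Proposition 2.1]{araujo_2021}, so there is no in-text argument to compare yours against; I judge the proposal on its own. Your spectral bookkeeping is correct and standard: the $DG(\sigma)$-invariance of $E^s_\sigma\oplus E^{cu}_\sigma$, the sign and ordering of real parts from contraction and domination, the observation that sectional expansion applied to the invariant plane of the two eigenvalues of least real part on $E^{cu}_\sigma$ forbids two non-positive real parts and forces any negative eigenvalue there to be real, and the closing inequality $\lambda^s+\lambda^u\ge\theta>0$. Your treatment of $\sigma\in\omega(x)$ is also essentially the argument in the literature: the flow direction lies in $E^{cu}$, so either $x\in W^s(\sigma)=W^{ss}(\sigma)$ (under the contradiction hypothesis) and the normalized flow direction converges into $E^s_\sigma$ while remaining in $E^{cu}$, or the orbit makes arbitrarily deep excursions and at the deepest points $G(\phi_{t_n}x)$ has comparable $E^s_\sigma$- and $E^{cu}_\sigma$-components; either way the uniform transversality of the splitting is violated. (Do state the first sub-case explicitly: ``deepest-approach points of excursions'' presupposes the orbit does not converge to $\sigma$.)

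The genuine gap is exactly where you flagged it: the case $\alpha(x)=\{\sigma\}$, i.e.\ $x\in W^u(\sigma)$. Your proposed exclusion --- that a sectional-hyperbolic attracting set cannot contain a singularity with purely expanding center-unstable subspace --- is circular (it is precisely what is to be proved in this case) and, worse, it contradicts this paper's own examples: item (4) of Remark~\ref{rmk:notLorenzlike} and Remarks~\ref{rmk:morenonL} and~\ref{rmk:nonLlike} describe singular-hyperbolic attracting sets containing saddle-focus equilibria whose restriction of $DG(\sigma)$ to $E^{cu}_\sigma$ has only eigenvalues of positive real part. For such a $\sigma$ the whole $d_{cu}$-dimensional unstable manifold lies in $\Lambda$ (an attracting set contains the unstable manifolds of its points), and every $x\in W^u(\sigma)\setminus\{\sigma\}$ satisfies $\sigma\in\alpha(x)$ while $\sigma$ is not generalized Lorenz-like; so no argument can close this case as stated. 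What is true, and what the paper actually uses (a singularity in $\supp\mu$ is in the $\omega$-limit of a dense forward orbit of $\supp\mu$), is the version with $\sigma\in\omega(x)$, or with $\sigma\in\alpha(x)$ for some $x\notin W^u(\sigma)$ --- in which case the backward orbit of $x$ makes arbitrarily deep excursions near $\sigma$ and your excursion argument applies verbatim in reversed time. You should restrict the hypothesis accordingly rather than attempt to rule out the unstable-manifold case.
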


\begin{remark}
  \label{rmk:notLorenzlike}
  \begin{enumerate}
  \item Partially hyperbolicity of $\Lambda$ ensures that the direction
    $G(x)$ of the flow is contained in the central-unstable subbundle
    $G(x)\subset E^{cu}_x$ for all $x\in\Lambda$; see \cite[Lemma
    5.1]{ArArbSal}
  \item If $\sigma\in\sing(G)\cap\Lambda$ is a generalized
    Lorenz-like singularity and $\gamma_\sigma^s$ is its
    local stable manifold, then at
    $w\in\gamma_\sigma^s\setminus\{\sigma\}$ we have
    $T_w\gamma_\sigma^s=
    E^{cs}_w=E^s_w\oplus\RR\cdot\{G(w)\}$ since
    $T\gamma_\sigma^s$ is $D\phi_t$-invariant and contains
    $G(w)$ (because $\gamma_\sigma^s$ is $\phi_t$-invariant)
    and the dimensions coincide.
  \item If $\sigma\in\sing(G)\cap\Lambda$ is a generalized
    Lorenz-like singularity, then the strong-stable manifold of
    $\sigma$ (with dimension $d_s=\dim E^s$), that is
    \begin{align*}
      W^{ss}_\sigma=\left\{x\in M: \dist(\phi_t(x),\sigma) e^{-\lambda^s
      t} \xrightarrow[t\to+\infty]{}0 \right\} 
    \end{align*}
    does not intersect any other point of $\Lambda$:
    $W^{ss}_\sigma\cap\Lambda=\{\sigma\}$; see e.g.\cite[Lemma 5.30 \&
    Remark 5.31]{AraPac2010}.
  \item If an equilibrium $\sigma\in\sing(G)\cap\Lambda$ is not
    generalized Lorenz-like, then $\sigma$ is not in the limit set of
    $\Lambda\setminus\{\sigma\}$, i.e. there is no
    $x\in\Lambda\setminus\{\sigma\}$ so that
    $\sigma\in\alpha(x)\cup\omega(x)$. An example is provided by the
    pair of equilibria of the Lorenz system of equations away from the
    origin: these are saddles with an expanding complex eigenvalue
    which belong to the attracting set of the trapping ellipsoid
    already known to E. Lorenz; see e.g. \cite[Section
    3.3]{AraPac2010} and references therein and also
    Subsections~\ref{sec:lorenz-attract-set}
    and~\ref{sec:examples-with-sharp}.
  \end{enumerate}
\end{remark}

\subsubsection{Statements of the results}
\label{sec:statement-results}

Singular-hyperbolic (and sectional-hyperbolic) \emph{attracting sets}
for $C^2$ smooth flows admit \emph{finitely many ergodic physical
  measures}; see e.g. \cite{ArSzTr} and \cite{araujo_2021}. Here we
provide a bound for the number of such ergodic physical measures. One
of the motivations for our statement comes from the following result
of Morales~\cite{morales04}.

\begin{theorem}\label{thm:morales}
  Let $\Lambda$ be a singular-hyperbolic attractor of a $3$-flow of a
  $C^r$ vector field $X$, $r \ge 1$. Then, there is a neighborhood $U$
  of $\Lambda$ such that \emph{every attractor} in $U$ of a $C^r$
  vector field $C^r$ close to $X$ is singular.
\end{theorem}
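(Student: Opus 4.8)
The plan is to combine the robustness of the singular-hyperbolic structure with the Hyperbolic Lemma (Lemma~\ref{le:hyplemma}) and the indecomposability of a transitive attractor. First I fix a trapping region $U$ of $\Lambda$. The two defining conditions --- partial hyperbolicity of the splitting $\es\oplus\ecu$ and volume expansion of $\ecu$ (equivalently sectional expansion, since $d_{cu}=2$) --- are described by invariant cone fields and are therefore $C^1$-open; they persist, with uniform constants, for the maximal invariant set $\Lambda_Y(U)$ of every vector field $Y$ in a suitable $C^r$-neighborhood $\cW$ of $X$. Moreover all equilibria of $X$ in $U$ lie in $\Lambda$ and are hyperbolic, and $\|X\|$ is bounded below off a fixed neighborhood of $\sing(X)\cap U$; hence for $Y\in\cW$ no new equilibria appear and each one continues to a single hyperbolic equilibrium of $Y$. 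Consequently $\Lambda_Y(U)$ is again singular-hyperbolic, and so is every attractor $A\subset U$ of $Y$, being a compact invariant subset of $\Lambda_Y(U)$. I also record that $\Lambda$ itself contains an equilibrium $\sigma$: this is implicit in the statement, for a nonsingular singular-hyperbolic attractor would be uniformly hyperbolic by Lemma~\ref{le:hyplemma} and its conclusion would fail.

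I then apply a dichotomy to such an attractor $A$. If $A$ contains an equilibrium it is singular and there is nothing to prove. Otherwise $A$ is a sectional-hyperbolic set with no equilibria, so Lemma~\ref{le:hyplemma} makes it a uniformly hyperbolic attractor, with one-dimensional $\eu$ in this $3$-dimensional setting. I exclude this alternative. A uniformly hyperbolic attractor is structurally stable, so for $Y$ close enough to $X$ the set $A$ continues to a uniformly hyperbolic attractor $A_X\subset U$ of $X$. As an attracting set of $X$ inside $U$ it satisfies $A_X\subseteq\Lambda=\Lambda_X(U)$; and since $\Lambda$ is transitive it contains no attracting set as a proper subset, because a dense orbit entering the basin of such a subset could never leave it. Hence $A_X=\Lambda$, which is absurd: $A_X$ is uniformly hyperbolic and therefore contains no singularity, whereas $\Lambda$ contains $\sigma$. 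So the nonsingular alternative cannot occur and $A$ is singular.

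The main obstacle is to make this exclusion uniform, that is, to obtain a single neighborhood $\cW$ serving all attractors at once: structural stability applies to a fixed hyperbolic attractor, but the $C^r$-closeness it requires could degenerate along a sequence $Y_n\to X$ carrying nonsingular hyperbolic attractors $A_n\subset U$ whose hyperbolicity constants deteriorate as the $A_n$ approach the equilibria. The crux is a uniform separation: there should exist $\delta_0>0$, independent of $Y\in\cW$, such that every nonsingular uniformly hyperbolic attractor contained in $\Lambda_Y(U)$ stays at distance at least $\delta_0$ from $\sing(Y)$. I would derive this from the persistent Lorenz-like structure at $\sigma$: by Proposition~\ref{prop:generaLorenzlike} the accumulated equilibrium is generalized Lorenz-like, by Remark~\ref{rmk:notLorenzlike}(3) its strong-stable manifold meets the attracting set only at $\sigma$, and the uniform domination and volume-expansion constants forbid an invariant nonsingular set from lingering near $\sigma$ while remaining uniformly hyperbolic. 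Once $\delta_0$ is available, all the relevant attractors carry uniform hyperbolicity constants and hence a common structural-stability neighborhood, so a single $\cW$ completes the proof. The remaining ingredients --- openness of the cone conditions and of the trapping property, and persistence of the hyperbolic equilibria --- are routine.
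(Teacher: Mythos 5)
First, a point of comparison: the paper does not prove Theorem~\ref{thm:morales} at all --- it is quoted verbatim from Morales~\cite{morales04}, so your attempt can only be measured against that external proof. Your setup is sound: the robustness of the partially hyperbolic splitting and of the volume expansion via cone fields, the persistence of the hyperbolic equilibria, the reduction (via the Hyperbolic Lemma~\ref{le:hyplemma} and Remark~\ref{rmk:noisol}) to excluding a nonsingular, hence uniformly hyperbolic, nontrivial attractor $A$ of a nearby field $Y$, and the observation that a nontrivial transitive attracting set admits no proper attracting subset are all correct and are indeed part of any proof of this statement.

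The genuine gap is exactly the step you flag as ``the crux'' and then do not carry out: the uniform separation $\delta_0$ of every putative nonsingular hyperbolic attractor from $\sing(Y)$, which is what would make the structural-stability constants (needed to push $A$ back from $Y$ to $X$) uniform over $\cW$. You propose to derive it from the claim that ``the uniform domination and volume-expansion constants forbid an invariant nonsingular set from lingering near $\sigma$ while remaining uniformly hyperbolic,'' but this mechanism is false as stated: compact invariant nonsingular subsets of a singular-hyperbolic set routinely pass arbitrarily close to a Lorenz-like equilibrium while being uniformly hyperbolic --- the periodic orbits of the geometric Lorenz attractor accumulating on the origin are exactly such sets, with hyperbolicity constants that degenerate precisely because $\inf\|G\|$ on the set tends to zero (the splitting $E^{cu}=\RR\{G\}\oplus E^u$ produced by Lemma~\ref{le:hyplemma} has constants depending on that infimum). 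Ruling out a sequence of ``exploded'' nonsingular attractors $A_n$ accumulating on $\sigma$ is the entire content of Morales's theorem, and his argument is of a different nature: it exploits that $A_n$ is an \emph{attractor} (so its basin is an invariant open set disjoint from $W^s(\sigma_n)$) together with the denseness of unstable manifolds of periodic orbits and of $W^u(\sigma)$ in the transitive set $\Lambda$, a connecting-lemma-type argument in the spirit of Theorem~\ref{thm:cor1.4BM}, to show that such a basin would have to swallow the continuation of $\sigma$. Without a proof of your $\delta_0$-separation (or a substitute for it), the argument does not close; and since the separation is the hard part of the theorem rather than a routine estimate, the proposal as written is incomplete at its essential step.
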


Therefore, in this setting, it is natural to consider the number of
ergodic physical measures whose support contains a singularity, since
there are no hyperbolic attractors. In addition, from
Proposition~\ref{prop:generaLorenzlike}, \emph{any singularty
  contained in the support of these physical measures are necessarily
  (generalized) Lorenz-like}, since the support of an ergodic measure
for a continuous invertible map on a metric space admits a dense
forward and backward orbit; see e.g. \cite{Man87}.

We recall that all hyperbolic sets for flows are singular-hyperbolic
in particular. Moreover, for $C^2$ smooth flows, each hyperbolic
attractor admits a unique physical measure (see e.g. Bowen-Ruelle
\cite{BR75} or \cite[Theorem 7.4.10]{fisherHasselblatt12}), then we
can find flows with any number of physical measures within hyperbolic
attracting sets; see below and Subsection~\ref{sec:finiteHypatt}.

\begin{maintheorem}\label{mthm:numberSRB}
  Let $G$ be a $3$-vector field of class $C^2$, $\Lambda$ be a connected
  singular-hyperbolic attracting set of $G$, and $s_L$ be the number
  of Lorenz-like singularities of $\Lambda$. Then the number $s$ of
  ergodic physical measures supported in $\Lambda$ whose support
  contains a singularity satisfies $s\le2\cdot s_L$.
\end{maintheorem}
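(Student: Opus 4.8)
The plan is to treat the two unstable separatrices of each Lorenz-like singularity as a finite resource for which the physical measures must compete. Write $s_L$ for the number of Lorenz-like singularities of $\Lambda$ and recall that, by Proposition~\ref{prop:generaLorenzlike} together with the fact that the support of an ergodic measure carries a dense forward and backward orbit, \emph{every} singularity contained in the support of one of the measures we are counting is automatically (generalized) Lorenz-like. Since $G$ is a $3$-vector field and $\Lambda$ is singular-hyperbolic we have $\dim E^s=1$ and $\dim E^{cu}=2$, so each such $\sigma$ has a one-dimensional unstable manifold $W^u_\sigma$ consisting of $\sigma$ and exactly two orbits $\ell^+_\sigma,\ell^-_\sigma$ (the unstable separatrices), together with a one-dimensional strong-stable manifold satisfying $W^{ss}_\sigma\cap\Lambda=\{\sigma\}$ by Remark~\ref{rmk:notLorenzlike}(3).

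First I would show that each ergodic physical measure $\mu$ with $\sigma\in\supp(\mu)$ contains in its support at least one of the two separatrices $\ell^\pm_\sigma$. Since the basin $B(\mu)$ has positive volume, $\mu$ cannot be a point mass at the saddle $\sigma$, so $\supp(\mu)$ is nontrivial and, being the support of an ergodic measure, transitive; hence there are regular points of $\supp(\mu)$ accumulating on $\sigma$. Projecting these points along the local strong-stable leaves onto the planar center-unstable manifold $W^{cu}_\sigma$ (a saddle with eigenvalues $\lambda^s<0<\lambda^u$), they fall, for at least one sign $\epsilon\in\{+,-\}$, on the open half into which the weak-stable separatrix divides $W^{cu}_\sigma$ and which contains $\ell^\epsilon_\sigma$. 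The (center-)unstable manifolds of these regular points lie in $\supp(\mu)$ and accumulate on the local branch $\ell^\epsilon_\sigma$ as their base points tend to $\sigma$; since $\supp(\mu)$ is closed and $\phi_t$-invariant, the whole orbit $\ell^\epsilon_\sigma$ lies in $\supp(\mu)$. This assigns to every measure $\mu$ we are counting a nonempty subset $S(\mu)$ of the set $\mathcal{S}=\{\ell^\epsilon_\sigma:\sigma\text{ Lorenz-like},\ \epsilon=\pm\}$, which has exactly $2s_L$ elements.

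Next I would prove that the sets $S(\mu)$ are pairwise disjoint: a single separatrix cannot lie in the supports of two distinct ergodic physical measures. The mechanism is the SRB property recalled in the Introduction, namely absolute continuity of the disintegration along (center-)unstable manifolds: the forward orbit of a point on an unstable separatrix equidistributes, so its Birkhoff time average converges to a single ergodic physical measure, and the separatrix therefore meets the ergodic basin of at most one $\mu$. As distinct ergodic physical measures have disjoint basins, the same separatrix cannot be used by two of them, which combined with the previous step yields $S(\mu_1)\cap S(\mu_2)=\emp$ whenever $\mu_1\neq\mu_2$.

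Finally the count is purely combinatorial: the $s$ measures under consideration produce $s$ pairwise disjoint nonempty subsets $S(\mu)$ of the $2s_L$-element set $\mathcal{S}$, whence $s\le 2s_L$. I expect the main obstacle to be the second step, the exclusivity of separatrices, since it requires converting the abstract SRB structure of $\mu$ into the concrete assertion that a single separatrix orbit determines a unique physical measure through its basin; by contrast the first step, locating a separatrix inside each support via the approach side in $W^{cu}_\sigma$ and invariance, and the closing bookkeeping, are comparatively routine.
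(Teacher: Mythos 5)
Your Step 2 is the heart of the argument and it is false, both as a statement and in its justification. As a statement: a single unstable separatrix of a Lorenz-like singularity \emph{can} lie in the supports of two distinct ergodic physical measures. This happens in the sharp examples of Subsection~\ref{sec:examples-with-sharp}: there $\sigma_0$ belongs to $\supp\mu_+$ and to $\supp\mu_-$, and since each of these supports is closed, invariant and contains regular orbits passing arbitrarily close to $\sigma_0$ on both sides of $W^s_{loc}(\sigma_0)\setminus W^{ss}_{loc}(\sigma_0)$ within its own half-space $\{z\gtrless0\}$, \emph{each} support contains \emph{both} unstable separatrices of $\sigma_0$ (exactly by the closing argument you use in your own Step 1). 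So $S(\mu_+)=S(\mu_-)=\mathcal{S}$ and your disjointness fails completely, even though $s=2s_L$ holds there. As for the justification: you conflate basin and support. That the forward orbit of a separatrix point lies in the basin of at most one ergodic measure is true but irrelevant, since $x\in\supp\mu$ is a topological condition ($\mu$ charges every neighborhood of $x$) and does not imply $x\in B(\mu)$; supports of distinct ergodic measures routinely overlap (already the point masses on periodic orbits inside a Lorenz attractor have supports contained in the support of the SRB measure). Moreover the separatrix is the unstable manifold of the non-generic point $\sigma$, so the SRB disintegration gives no control over its Birkhoff averages, which need not converge at all.

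The finite resource that actually works, and that the paper uses in Lemma~\ref{le:221}, is not the pair of unstable separatrices but the pair of \emph{sides of the local strong-stable manifold inside the two-dimensional stable manifold of $\sigma$}, i.e.\ the two directions of approach to $\sigma$ along the weak-stable eigendirection. The exclusivity of this resource is not automatic: one first shows (Proposition~\ref{pr:densestablesing}, via the Bautista--Morales connecting lemma and the homoclinic-class structure of $\supp\mu$) that the unstable leaf of a $\mu$-generic point transversely crosses $W^s(\sigma)$, hence accumulates $\sigma$ along the weak-stable direction on a definite side; and then, if two measures approach from the \emph{same} side, the uniform-size stable foliation of Proposition~\ref{prop:Ws} together with its absolute continuity produces a stable holonomy carrying a positive-Leb\-esgue set of $\mu_1$-generic points on one leaf onto $\mu_2$-generic points on the other, forcing $\mu_1=\mu_2$. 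Since there are only two sides, at most two measures share $\sigma$, and the combinatorial count you describe then goes through. Your outline contains none of this analytic input (transversality of generic unstable leaves with $W^s(\sigma)$, absolute continuity of the stable holonomy), and no choice of ``resource'' can avoid it: some such rigidity mechanism is needed to turn coincidence of supports into coincidence of measures.
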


Let $\V$ be the $C^r$ neighborhood of a $3$-vector field $X$ which
admits a singular-hyperbolic attractor $\Lambda$ with trapping region
$U$, for some $r\ge2$, according to the previous
Theorem~\ref{thm:morales}. Then the previous bound applies to all
physical measures of the attracting set within $U$ for all vector
fields in $\V$. More precisely, we have the following.
  
  \begin{maincorollary}\label{mcor:numberSRB}
    Let $G\in\V$ be given.  Let $s_0$ be the number of singularities
    of $G$ in $U$. Then the number $s$ of ergodic physical measures
    supported in $\Lambda=\Lambda_G(U)$ satisfies $s\le2\cdot s_0$
    (and all of them contain some singularity in their support).
  \end{maincorollary}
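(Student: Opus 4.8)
The plan is to derive Corollary~\ref{mcor:numberSRB} from the main Theorem~\ref{mthm:numberSRB} together with the structural result Theorem~\ref{thm:morales} and Proposition~\ref{prop:generaLorenzlike}. Fix $G\in\V$, so that $G$ is $C^r$-close (with $r\ge2$, hence $C^2$) to the reference field $X$ admitting a singular-hyperbolic attractor with trapping region $U$. The attracting set $\Lambda=\Lambda_G(U)=\cap_{t>0}\closure{\phi_t(U)}$ is well defined because $U$ is a trapping region for every field sufficiently $C^1$-close to $X$ (the condition $\closure{\phi_t(U)}\subset U$ is open in the $C^1$ topology). The first step is to observe that, by robustness of singular-hyperbolicity, $\Lambda$ is itself a singular-hyperbolic attracting set for $G$; this is what makes the main theorem applicable.

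The main theorem counts only those ergodic physical measures whose support contains a singularity, and bounds that number by $2\,s_L$, where $s_L$ is the number of Lorenz-like singularities in $\Lambda$. To pass to the statement in terms of $s_0$, the total number of singularities of $G$ in $U$, I would argue in two directions. First, every Lorenz-like singularity of $\Lambda$ is in particular a singularity of $G$ lying in $U$, so $s_L\le s_0$, and therefore $2\,s_L\le 2\,s_0$. Second, and crucially, I must show that in this setting there are \emph{no} physical measures whose support avoids all singularities, so that the count $s$ of \emph{all} ergodic physical measures coincides with the count controlled by Theorem~\ref{mthm:numberSRB}. Here is where Theorem~\ref{thm:morales} enters: it guarantees that, shrinking $\V$ if necessary, every attractor inside $U$ for every $G\in\V$ is singular, i.e. contains an equilibrium. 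The support of an ergodic physical measure is a transitive invariant set carrying a dense orbit (by the Mañé-type fact cited in the excerpt, the support of an ergodic measure admits dense forward and backward orbits), and such a set is an attractor in the relevant sense; hence its support must contain a singularity. By Proposition~\ref{prop:generaLorenzlike}, any singularity accumulated from within $\Lambda\setminus\{\sigma\}$ — which is automatic for a singularity lying in the support of such a transitive measure — is necessarily (generalized) Lorenz-like.

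Combining these observations, every ergodic physical measure supported in $\Lambda$ has a singularity in its support, so $s$ equals the quantity bounded in Theorem~\ref{mthm:numberSRB}, giving $s\le 2\,s_L\le 2\,s_0$, and moreover each such measure contains a singularity in its support, which is the parenthetical assertion. The main obstacle I anticipate is justifying rigorously the step that the support of a physical measure qualifies as an ``attractor'' in the precise sense required by Theorem~\ref{thm:morales}: one must verify that the closure of the support, or the relevant piece of $\Lambda$ it generates, is a genuine (transitive) attractor rather than a merely transitive invariant set, so that the singularity-forcing conclusion of Morales's theorem can be invoked. A careful statement would likely use that the basin $B(\mu)$ has positive volume together with the trapping structure of $U$ to exhibit an open set attracted to $\supp\mu$, and then apply Theorem~\ref{thm:morales} to that attractor; the remaining inequalities $s_L\le s_0$ and $s\le 2\,s_L$ are then immediate bookkeeping.
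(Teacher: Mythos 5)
Your overall strategy is the same as the paper's: reduce to Theorem~\ref{mthm:numberSRB} by showing that, for $G\in\V$, every ergodic physical measure on $\Lambda_G(U)$ must contain a singularity in its support, and that all such singularities are Lorenz-like. The bookkeeping $s\le 2s_L\le 2s_0$ is fine. But the step you yourself flag as the ``main obstacle'' is a genuine gap, not a formality: the support of an ergodic physical measure is a transitive invariant set, and transitive invariant sets are emphatically \emph{not} attractors in general (a saddle periodic orbit is transitive), so you cannot feed $\supp\mu$ directly into Theorem~\ref{thm:morales}. Nor does positivity of $\Leb(B(\mu))$ by itself produce an open set attracted to $\supp\mu$: the basin of a physical measure can a priori be a positive-volume set with empty interior. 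What is actually needed is the SRB property: the absolutely continuous disintegration of $\mu$ along strong-unstable leaves forces $\supp\mu$ to contain the whole unstable leaf through a dense set of its points, hence (by continuity of the unstable lamination on the hyperbolic set $\supp\mu$, which is hyperbolic by the Hyperbolic Lemma once it has no singularities) through \emph{every} point; saturation by unstable manifolds plus the Bowen--Ruelle argument is what makes $\supp\mu$ a genuine hyperbolic attractor. This is exactly Proposition~\ref{pr:regSRBhypattract} (equivalently the last clause of Theorem~\ref{thm:APPV}), which the paper proves separately and which your argument silently needs.

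Two further points of comparison. First, the paper does not invoke Theorem~\ref{thm:morales} directly but its strengthening Corollary~\ref{cor:Morales}, which rules out non-singular \emph{attracting sets} (via shadowing and spectral decomposition) near a singular-hyperbolic attractor; once you have Proposition~\ref{pr:regSRBhypattract} in hand, the attractor version would also suffice for this corollary, so this is a matter of packaging. Second, your inequality $s_L\le s_0$ is correct but weaker than what the paper actually establishes: using Proposition~\ref{prop:generaLorenzlike}, every singularity contained in the support of a physical measure (hence accumulated from within $\Lambda\setminus\{\sigma\}$) is Lorenz-like, and the singularities persisting in $\Lambda_G(U)$ are continuations of those of the original attractor, so the count by Lorenz-like equilibria and the count by all equilibria in $U$ agree in this setting. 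Your version still yields the stated bound, so no harm is done there; the only substantive repair needed is the attractor step above.
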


  About the inequality above, we observe the following.

  \begin{enumerate}
  \item If $\Lambda$ contains no equilibria of $G$, that is, $s_L=0$,
    then we have a hyperbolic attracting set, so we obtain equality in
    the bound given by Theorem~\ref{mthm:numberSRB} in this particular
    case: $s=0$ when $s_L=0$.

    In Subsection~\ref{sec:finiteHypatt}, we provide a construction of
    a connected singular-hyperbolic attracting set with no
    singularities (and thus, uniformly hyperbolic) and any given
    finite number of ergodic physical measures (none of which contain
    equilibria).
    
  \item Moreover, Morales~\cite[Theorem A]{Morales07} describes the
    construction of a singular-hyperbolic attracting set whose unique
    equilibrium is non-Lorenz-like, and admits a non-singular
    transitive component which supports an ergodic physical
    measure. We again have $s=0=s_L$ but with a non-Lorenz-like
    equilibrium.

  \item In addition, the (geometric) Lorenz attractor, provides an
    example where $s_L=s=1$; see
    Subsection~\ref{sec:lorenz-attract-set}. Hence, a strict
    inequality is obtained in the statement of
    Theorem~\ref{mthm:numberSRB}.

  \item It is easy to increase the number of Lorenz-like and
    non-Lorenz like equilibria while keeping the number of ergodic
    physical measures: see
    Remarks~\ref{rmk:moreLorenzlike},~\ref{rmk:morenonL}
    and~\ref{rmk:nonLlike}. Moreover, Morales~\cite[Theorem
    B]{Morales07} presents an example of a singular-hyperbolic
    attractor with several Lorenz-like equilibria.
  \end{enumerate}

  We also describe examples showing that the inequality is sharp.
  
  \begin{maintheorem}\label{mthm:sharpex}
    Given a integer $s_L>0$ there exists a $C^2$ vector field, on a
    bounded and open subset of $\RR^3$, having a connected
    singular-hyperbolic attracting set $\Lambda$ supporting exactly
    $2s_L$ ergodic physical measures and $s_L$ equilibria, all of them
    Lorenz-like: i.e. $s=2\cdot s_L$.
  \end{maintheorem}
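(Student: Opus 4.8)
The plan is to construct explicit examples realizing the sharp bound $s = 2s_L$ by gluing together $s_L$ copies of a modified geometric Lorenz model in a single connected attracting set, where each copy is arranged so that its Lorenz-like equilibrium lies on the boundary between two distinct ergodic physical measures. The starting point is the geometric Lorenz attractor, which (as noted in Subsection~\ref{sec:lorenz-attract-set}) has $s_L = 1$ and $s = 1$: it is transitive, and its single physical measure has support containing the origin. To double the count of measures for a single equilibrium, I would break the transitivity of the return map on the cross-section. Recall that the geometric Lorenz flow is analyzed via a one-dimensional quotient map $f$ on an interval $I = [-1,1]$ (the leaf space of the stable foliation on the cross-section), with a discontinuity at the point $0$ corresponding to the stable manifold of $\sigma$. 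The standard construction makes $f$ \emph{transitive} by requiring $f$ to be expanding and the images of the two branches $f([-1,0))$ and $f((0,1])$ to overlap sufficiently. The key modification is to instead choose the one-dimensional map so that the two branches leave invariant two disjoint subintervals $I_- \subset [-1,0)$ and $I_+ \subset (0,1]$, each carrying its own ergodic a.c.i.p., while still keeping the full two-dimensional flow singular-hyperbolic with the \emph{same} Lorenz-like equilibrium $\sigma$ on the common boundary.

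Concretely, the plan is as follows. First I would recall the geometric construction of the Lorenz flow in a neighborhood of the equilibrium $\sigma$ (the linearized saddle with eigenvalues $-\lambda^u < \lambda^s < 0 < \lambda^u$), together with the affine return maps sending the top of the cross-section back through a pair of scrolls; this fixes the singular-hyperbolic structure and the local stable foliation, and is completely standard. Next, instead of designing the induced one-dimensional map $f$ to be transitive, I would design it to be a piecewise expanding map with two invariant intervals $I_{\pm}$ whose closures meet only at the discontinuity $0 = \gamma^s_\sigma \cap I$, arranging $f(I_-) = I_-$ and $f(I_+) = I_+$ with each restriction a full-branch (or mixing) expanding map. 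By the standard theory for piecewise expanding maps (Lasota--Yorke), each branch then supports exactly one ergodic absolutely continuous invariant probability measure, hence lifts via the physical-measure machinery for singular-hyperbolic attracting sets (cited in the Introduction, e.g.~\cite{araujo_2021, APPV}) to exactly one ergodic physical measure for the flow. Since both $I_-$ and $I_+$ accumulate on $0$, and $0$ corresponds to $W^{ss}_\sigma \cap I$, both resulting physical measures have $\sigma$ in their support, so this single Lorenz-like equilibrium accounts for exactly two ergodic physical measures, giving the case $s_L = 1$, $s = 2$.

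To obtain the general case, I would take $s_L$ disjoint copies of this doubled building block and glue them into a single connected attracting set. The cleanest way is to place $s_L$ cross-sections $\Sigma_1, \dots, \Sigma_{s_L}$, each with its own equilibrium $\sigma_j$ and its own pair of invariant intervals $I_j^{\pm}$, and to arrange the global return dynamics so that orbits leaving one block are funneled, through a uniformly hyperbolic (non-singular, expanding) connecting region, back into the same block — so that no orbit can cross from block $j$ to block $k$ for $j \neq k$ in a recurrent way, preserving $2s_L$ disjoint recurrence classes on the interval quotient, while the \emph{attracting set} itself (the maximal invariant set in the trapping region $U$) remains connected because the connecting regions and the wandering orbits link the blocks topologically. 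Connectedness together with non-communication of the recurrent parts is the standard phenomenon already exploited in the hyperbolic construction of Subsection~\ref{sec:finiteHypatt} (and for non-transitive Anosov flows, cf.~\cite{FranksWilliams80, Brunella93}); I would reuse that mechanism here. Each block contributes $2$ physical measures and $1$ Lorenz-like equilibrium, yielding exactly $s = 2 s_L$ and the required $s_L$ equilibria, all Lorenz-like.

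The main obstacle I anticipate is \emph{simultaneously} enforcing connectedness of the attracting set and non-communication of the $2s_L$ recurrence classes while rigorously verifying that the global flow remains $C^2$ and singular-hyperbolic with volume-expanding center-unstable bundle across all the gluing regions. Making the two branches $I_j^{\pm}$ invariant (rather than mixing across $0$) is delicate because the geometric Lorenz return map is naturally expanding \emph{and} the images of the branches tend to overlap; I would need to tune the post-equilibrium affine maps so that each branch's image lands inside the correct half-interval, which constrains the cone conditions and must be checked against the singular-hyperbolic cone-field estimates (sectional/volume expansion as in~\eqref{eq:sectional}) throughout. A secondary technical point is confirming that each branch's a.c.i.p. indeed lifts to a physical measure whose \emph{support} contains $\sigma_j$ — this follows from the fact that $0 \in \closure{I_j^{\pm}}$ and $W^{ss}_{\sigma_j} \cap \Lambda = \{\sigma_j\}$ (Remark~\ref{rmk:notLorenzlike}(3)), so the support of the lifted measure, being a forward-backward-dense invariant set accumulating on the cross-section point $0$, must contain the orbit closure through $\sigma_j$; I expect this to be routine once the branch dynamics are pinned down, but it deserves explicit justification.
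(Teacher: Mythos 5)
Your construction keeps all the recurrence on a single cross-section $S$ placed on one side of the equilibrium, and tries to obtain two ergodic physical measures sharing $\sigma$ by making the two branches of the one-dimensional quotient map preserve disjoint subintervals $I_\pm$ meeting only at the discontinuity $0$. This runs into two concrete obstructions. First, at the level of the interval map: each branch of the Lorenz quotient map is continuous and monotonic on its half-interval, so if $f(I_+)\subset I_+=(0,c]$ then $\inf f(I_+)=\min\{f(0^+),f(c)\}$. For the a.c.i.p.\ $\nu$ on $I_+$ to have support accumulating on $0$ --- which you correctly identify as necessary for $\sigma\in\supp\mu_+$, since $\supp\nu=\closure{f(\supp\nu)}$ --- you would need $f(0^+)=0$ or $f(c)=0$, i.e.\ a connection from the unstable separatrix of $\sigma$ (or from the boundary orbit) back into $W^s(\sigma)$: a degenerate singular cycle, not the ``full-branch expanding map on $I_+$'' you describe. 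For any genuinely self-invariant expanding monotone branch, the attractor of $f|_{I_+}$ stays a definite distance from $0$, so the lifted measure does \emph{not} contain $\sigma$ in its support; the point you defer as ``routine'' is exactly where the construction breaks.

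Second, and more fundamentally, even if both halves could be made invariant with supports accumulating on $0$, both would accumulate $\sigma$ from the \emph{same side of the weak-stable direction} (the side on which the single cross-section $S$ sits): the two components of $S\setminus\Gamma$ are distinguished by which branch of $W^u(\sigma)$ their orbits follow after passing the equilibrium, not by which side of $W^{ss}_{loc}(\sigma)$ inside $W^s_{loc}(\sigma)$ they approach along. The holonomy/absolute-continuity argument of Lemma~\ref{le:221} then forces the two measures to coincide, so your mechanism can never yield two distinct measures per equilibrium. This is precisely why the paper's construction is arranged differently: it keeps one transitive geometric Lorenz piece above $\sigma_0$, chooses the one-dimensional map with \emph{repelling fixed points at the endpoints of the interval} so that the boundary of that piece carries hyperbolic periodic orbits $\cO(p_\pm)$, and then performs a surgery sending fundamental domains of $W^{uu}(p_\pm)$ into the interior of a \emph{second cross-section $S_-$ on the opposite side of $\sigma_0$}, producing a second transitive piece (whose quotient return map is countably-branched piecewise expanding) that accumulates $\sigma_0$ from below. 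Your ``copy, paste and connect'' step for general $s_L$ is in the same spirit as the paper's chaining of blocks, but without the two-sided mechanism the count per equilibrium is $1$, not $2$, and sharpness is not established.
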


  The constructions can be easily adapted to provide examples where
  the number of (hyperbolic non-Lorenz-like) equilibria in the
  singular-hyperbolic attracting set is larger than the number of
  Lorenz-like equilibria; see Remark~\ref{rmk:notLorenzlike} after the
  proof of Theorem~\ref{mthm:sharpex}.


\subsection{Possible extension of the results}
\label{sec:organization}

The above results should be extended, either for higher dimensional
manifolds, or for less smooth vector fields. Moreover, the examples
presented in the proof of Theorem~\ref{mthm:sharpex} are not robust,
so the inequality $s<s_L$ should be generic, and the variation of the
number of physical measures with the vector field should be considered.

\subsubsection{Semicontinuity of the number of physical measures}
\label{sec:semicont-number-phys}

Since we have obtained an upper bound for the number of singular
physical measures (the ones containing some equilibria in their
support) on singular-hyperbolic attracting sets; these attracting
sets are robust (for each $C^1$ close vector field the trapping region
still contains a singular-hyperbolic attracting set); and the number
of Lorenz-like equilibria is locally constant in a $C^1$ neighborhood
of the vector field, it is natural to conjecture the following.

  Moreover, the number of ergodic physical measures containing
  singularties in their support varies upper semicontinuously with the
  vector field, but a pair of physical ergodic measures may fuse under
  arbitrarly small perturbations.

  \begin{conjecture}\label{conj:semicont}
    Let $\V$ be the open family of $C^2$ vector fields admitting a
    singular-hyperbolic attracting set $\Lambda$ and $s:\V\to\ZZ_0^+$
    the function associating to each $G\in\V$ the number $s(G)$ of
    ergodic physical measures supported in $\Lambda$ and containing
    some equilibria. Then $s$ is upper semicontinous.
  \end{conjecture}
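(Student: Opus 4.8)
The plan is to reduce the problem to the one-dimensional quotient dynamics and then to control how the relevant combinatorial data varies under perturbation. First I would use that every connected singular-hyperbolic attracting set for a $C^2$ $3$-flow admits a global Poincar\'e return map to a finite union of cross-sections transverse to the flow, together with a uniformly contracting invariant stable foliation on these sections (as in \cite{APPV,ArMel17,araujo_2021}). Quotienting each section by its stable leaves produces a piecewise $C^{1+\alpha}$ expanding map $f_G$ of a finite union of intervals whose discontinuities correspond exactly to the stable manifolds $W^{ss}_\sigma$ of the Lorenz-like singularities $\sigma\in\sing(G)\cap\Lambda$. Each of the $s_L$ singularities contributes one discontinuity with two lateral limits, namely the return positions of its two unstable separatrices, giving $2s_L$ distinguished \emph{critical values}; this is the geometric source of the bound $s\le 2s_L$ of Theorem~\ref{mthm:numberSRB}. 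The ergodic physical measures of the flow whose support contains an equilibrium are in bijection with the ergodic absolutely continuous invariant measures of $f_G$ whose supports are attracting (forward-invariant) transitive pieces.

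Second, I would show that the reduced data depends continuously on $G$. Since $s_L$ is locally constant in the $C^1$ topology, the number of discontinuity branches of $f_G$ does not change near $G_0$; and since the cross-sections, the stable foliation and the separatrices vary continuously with $G$, the $2s_L$ critical values of $f_G$ move continuously with $G$. I would then fix a neighborhood of $G_0$ on which $f_G$ satisfies uniform expansion and uniform Lasota--Yorke inequalities, so that the associated transfer operators $L_G$ acting on functions of bounded variation enjoy a uniform spectral gap and a uniform bound $<1$ on the essential spectral radius.

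The core step is to convert the count $s(G)$ into an upper-semicontinuous quantity. I would encode the dynamics by a finite directed \emph{reachability graph} whose vertices are the $2s_L$ separatrices together with finitely many Markov pieces, and whose edges record which separatrix's forward $f_G$-orbit enters the region controlled by another; the physical measures then correspond to the terminal strongly connected components of this graph that carry an absolutely continuous invariant measure, equivalently to the ergodic such measures supported on attractors. As $G$ varies, an edge is \emph{created} precisely when a critical value crosses the boundary of a controlling region, and such a crossing can only merge terminal components, hence only decrease $s$, in agreement with the fusion phenomenon noted just before the statement. Moreover, robust transitivity of singular-hyperbolic attractors in the sense of Morales--Pac\'ifico--Pujals \cite{MPP99,MPP04} forbids an existing attracting component from splitting into two, which would be the other way $s$ could increase. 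Thus the only remaining mechanism that could increase $s$ is the \emph{creation} of a brand new attracting component out of the strictly transient part.

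The main obstacle is exactly to rule out this reverse-of-fusion mechanism: a priori a heteroclinic connection present at $G_0$ could \emph{break} under perturbation and turn a formerly transient transitive piece into a genuine attractor, producing a new physical measure. Excluding this is subtle because breaking a connection is itself a small perturbation, and because the bijection ``physical measures $\leftrightarrow$ peripheral eigenvalues of $L_G$ at the value $1$'' must be refined to count only those invariant measures supported on \emph{attractors}: the one-dimensional operator $L_G$ may retain eigenvalue $1$ with the old multiplicity even after the flow has lost a physical measure, so a naive two-sided Keller--Liverani stability argument would contradict the observed fusion. The decisive point is therefore to prove that the \emph{attracting} status of a transitive component is itself upper semicontinuous --- that no new attracting piece can emerge from the transient part without first passing through a configuration in which it fails to be attracting --- and to match this with the reachability-graph analysis uniformly over a neighborhood of $G_0$, controlling the non-generic parameters where critical values lie exactly on region boundaries. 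This matching, rather than the spectral estimates, is where I expect the real difficulty to lie.
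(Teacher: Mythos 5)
The statement you set out to prove is Conjecture~\ref{conj:semicont}: the paper offers \emph{no proof} of it, and the surrounding discussion (the non-robustness of the examples behind Theorem~\ref{mthm:sharpex}, Conjecture~\ref{conj:ineqgeneric}) makes clear the author regards it as genuinely open. Your proposal must therefore stand on its own, and it does not close the problem: as you concede in your final paragraph, the decisive step --- that no new attracting piece, hence no new physical measure, can be created when a connection present at $G_0$ is broken by a perturbation --- is left unproven. After your reduction to the quotient map $f_G$, counting $s(G)$ \emph{is} counting exactly those pieces, so what you defer is not an auxiliary lemma but a reformulation of the conjecture itself; the Lasota--Yorke/Keller--Liverani apparatus you assemble does not decide it, for precisely the reason you identify.

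There is, moreover, a structural error that precedes the admitted gap: your combinatorial framework is contradicted by the paper's own examples. In the sharp construction of Subsection~\ref{sec:examples-with-sharp} (the case $s_L=1$, $s=2$ of Theorem~\ref{mthm:sharpex}), both ergodic physical measures $\mu_\pm$ contain $\sigma_0$ in their supports, and the support of $\mu_-$ is \emph{not} a terminal or attracting piece: since $\cO(p_+)\subset\supp\mu_-$, the argument of Lemma~\ref{le:unstablesing} yields $W^u(p_+)\subset\supp\mu_-$, and $W^u(p_+)$ contains the trajectory $\gamma$ that the construction routes through $D_0$ into the cross-section $S$, so $\supp\mu_-$ reaches, and partially contains, $\supp\mu_+$. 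In your reachability graph the lower piece has an outgoing edge and is not a terminal strongly connected component, so your count would return $1$ where the correct value is $2$. The identification ``physical measure $\leftrightarrow$ attracting transitive piece'' holds only for non-singular measures (Proposition~\ref{pr:regSRBhypattract}); the singular measures counted by the conjecture are exactly those for which it can fail, and their supports may overlap or even be nested rather than forming disjoint terminal components. Relatedly, your appeal to ``robust transitivity of singular-hyperbolic attractors in the sense of \cite{MPP99,MPP04}'' reverses those theorems: they prove that robustly transitive singular sets are singular-hyperbolic, not that singular-hyperbolic attractors (much less the transitive pieces of attracting sets) are robustly transitive --- the fragility of the paper's own examples shows they generally are not. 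Consequently the ``only remaining mechanism'' framing is unjustified: splitting of a component is not excluded, creation from the transient part is not excluded, and the proposal establishes neither.
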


\subsubsection{Genericity of the inequality $s<s_L$}
\label{sec:generic-inequal-ss_l}

The existence of periodic orbits at the boundary of the transversal
section in the examples presented in the proof of
Theorem~\ref{mthm:sharpex} (see Section~\ref{sec:geompropert}) is not
a robust situation under small perturbations of the vector
field. This, it is natural to state the following.

\begin{conjecture}\label{conj:ineqgeneric}
  Among the family of $3$-vector fields $G$ of class $C^2$ exhibiting
  singular-hyperbolic attracting set containing some Lorenz-like
  singularity, there exists an open and dense subset of vector fields
  for which the bound $s<s_L$ holds, for each singular-hyperbolic
  attracting set.
\end{conjecture}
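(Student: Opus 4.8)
The plan is to prove the result by a Baire-category argument, exhibiting an open and dense subset of the family $\V_L$ of $3$-vector fields of class $C^2$ whose trapping region $U$ carries a singular-hyperbolic attracting set with at least one Lorenz-like equilibrium. The basic structural observation is that on $\V_L$ the number $s_L$ of Lorenz-like singularities is $C^1$-locally constant: hyperbolic equilibria persist and vary continuously, the eigenvalue inequalities $-\lambda^u<\lambda^s<0<\lambda^u$ of Proposition~\ref{prop:generaLorenzlike} are open, and no new equilibrium can enter the attracting set under small perturbations, by robustness of the trapping region and of singular-hyperbolicity. Hence it suffices to produce, arbitrarily close to any $G\in\V_L$, a vector field $G'$ admitting a whole $C^2$ neighborhood on which $s<s_L$; the union of these neighborhoods is then the desired open dense set, and the openness hidden in this scheme is exactly a robust (local) form of the semicontinuity envisaged in Conjecture~\ref{conj:semicont}.

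First I would reduce the counting of physical measures to a one-dimensional problem. Fixing a global cross-section $\Xi$ transverse to the flow in $U$, the Poincar\'e return map $R$ uniformly contracts a $C^{1+}$ stable foliation $\F^s$ of $\Xi$ whose leaves are the traces of the strong-stable manifolds, and its discontinuity set is exactly $\Xi\cap W^{s}(\sing(G)\cap\Lambda)$; by Remark~\ref{rmk:notLorenzlike}(4) only the Lorenz-like equilibria contribute leaves there. Quotienting $\Xi$ over $\F^s$ yields a piecewise $C^{1+}$ expanding one-dimensional Lorenz map $f\colon I\to I$ with one discontinuity point $c_j$ for each Lorenz-like singularity $\sigma_j$, whose two one-sided limits $f(c_j^-),f(c_j^+)$ encode the two unstable separatrices of $\sigma_j$. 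Under this quotient the ergodic physical measures of the flow whose support contains a singularity correspond bijectively to the ergodic absolutely continuous invariant probabilities of $f$ whose support abuts some $c_j$, so that $s$ equals the number of such ergodic a.c.i.p. The factor $2$ in Theorem~\ref{mthm:numberSRB} becomes transparent here: a single discontinuity $c_j$ can lie on the common boundary of two distinct ergodic supports, one on each side.

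The heart of the argument is a perturbation lemma removing the mechanism that keeps the supports on the two sides of a $c_j$ distinct. In the sharp examples of Theorem~\ref{mthm:sharpex} this mechanism is a periodic orbit on the boundary of the cross-section, which for $f$ produces an invariant subinterval with a periodic endpoint isolating the two ergodic components. I would show that such boundary periodic orbits are destroyed by arbitrarily small $C^2$ perturbations of $G$ supported away from the equilibria, so that $s_L$ and singular-hyperbolicity are unchanged; and that once an orbit of some $f(c_j^{\pm})$ is pushed into the interior of a transitive region, the two formerly separate supports fuse into a single transitive piece carrying a unique a.c.i.p. This situation is robust, because topological transitivity of a piecewise-expanding Lorenz map is open among maps whose discontinuity images avoid periodic walls, so each such fusion strictly lowers $s$ on a whole neighborhood of $G'$.

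The main obstacle is quantitative and combinatorial: upgrading "at least one fusion" to the uniform bound $s<s_L$ for \emph{every} singular-hyperbolic attracting set carried by $U$. Controlling the dependence of the ergodic a.c.i.p. of $f$ on the vector field is delicate, since physical measures need not vary continuously; one must instead track the forward orbits of the finitely many one-sided images $f(c_j^{\pm})$ and impose a countable family of open-dense general-position conditions (no image $f(c_j^{\pm})$ is eventually mapped onto a discontinuity, no invariant subinterval has periodic boundary, and distinct images genuinely separate distinct ergodic supports), then prove that their simultaneous validity forces the number of ergodic supports strictly below $s_L$. Removing walls as above yields only $s<2s_L$; passing to the stronger $s<s_L$ requires further generic mergings that link distinct Lorenz-like equilibria into a common transitive component, and establishing that this stronger reduction holds uniformly over all attracting sets in $U$, while remaining robust, is where the difficulty concentrates.
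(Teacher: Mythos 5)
You should be aware at the outset that the statement you set out to prove is Conjecture~\ref{conj:ineqgeneric} of the paper: it is left \emph{open} there, offered with only a one-sentence motivation (the boundary periodic orbits in the examples of Theorem~\ref{mthm:sharpex} are not robust). So there is no proof in the paper to compare against, and your text must be judged on its own as a research program. Read that way, it is honest -- your final paragraph concedes that upgrading ``at least one fusion'' to the uniform strict bound is unresolved -- but that concession is precisely the entire content of the conjecture, so what you have written is a plan with an acknowledged hole at its center, not a proof.

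Beyond that, there are two concrete problems. (a) The strict inequality $s<s_L$ cannot hold on an open and dense subset of the stated family, and your own mechanism shows why: the geometric Lorenz attractor has $s=s_L=1$ (see Subsection~\ref{sec:lorenz-attract-set}), and it is robust, i.e. every $C^2$-close vector field still carries a transitive singular-hyperbolic attractor with exactly one Lorenz-like singularity and a unique physical measure whose support contains it. Thus $s<s_L$ fails on a nonempty open set, which meets every dense set, so no open dense subset with the strict bound can exist. Your fusion procedure makes this visible: merging the two measures adjacent to each discontinuity $c_j$ can at best yield $s\le s_L$, and when $s_L=1$ there are no ``further generic mergings'' left to perform. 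A correct treatment would have to flag that the conjecture must be reinterpreted (as $s\le s_L$, or as $s<2s_L$) before any proof can begin. (b) The one-dimensional reduction is asserted at a level of generality where it is not available: a general singular-hyperbolic attracting set admits no single global cross-section $\Xi$ met by all orbits. The standard constructions -- and the paper's own examples in Section~\ref{sec:geompropert} -- use finitely many adapted sections, and the return map to their union has discontinuities not only along the stable manifolds of the singularities but also along preimages of the section boundaries, with infinitely many branches accumulating on cusp curves. Hence both ``one discontinuity per Lorenz-like singularity'' and the claimed bijection between singular physical measures and absolutely continuous invariant measures abutting the $c_j$ require substantial justification, as does the $C^{1+}$ regularity of the stable holonomy used to define the quotient map, which needs hypotheses in the spirit of Araujo--Melbourne rather than mere $C^2$ smoothness of $G$.
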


\subsubsection{Extension to higher dimensional vector fields}
\label{sec:extens-higher-dimens}

The results stated above hold for $3$-vector fields of class $C^2$
having singular-hyperbolic attracting sets. But the existence of
finitely many ergodic physical measures also holds for higher
dimensional singular-hyperbolic attracting sets with any stable
dimension, that is, $d_s>1$ and $d_{cu}=2$. Recently this was extended
in \cite{araujo_2021} to sectional-hyperbolic attracting sets with any
combination of $d_s\ge1$ and $d_{cu}\ge2$ values. Moreover, the
existence of homoclinic classes in sectional-hyperbolic attracting
sets was obtained in\cite{ArbBarMor16}. So it is natural to state the
following.

\begin{conjecture}\label{conj:sectionalhyp}
  The results of Theorem~\ref{mthm:numberSRB} and
  Corollary~\ref{mcor:numberSRB} also hold for any
  sectional-hyperbolic attracting sets, replacing ``Lorenz-like''
  singularities by ``generalized Lorenz-like'' singularities.
\end{conjecture}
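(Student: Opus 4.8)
The plan is to show that the whole counting architecture behind Theorem~\ref{mthm:numberSRB} survives in arbitrary dimension once ``Lorenz-like'' is read as ``generalized Lorenz-like'', the factor $2$ now arising as a uniform (no longer sharp) bound on the number of connected components of the punctured unstable manifold of a singularity. The two inputs I would borrow from \cite{araujo_2021} are: (i) a sectional-hyperbolic attracting set carries finitely many ergodic physical measures, each a $u$-Gibbs state whose support equals the closure of the (Pesin) unstable manifold of almost every one of its points and whose conditional measures along unstable leaves are equivalent to the induced leaf-volume; and (ii) any singularity $\sigma$ in the support of such a measure is generalized Lorenz-like, by Proposition~\ref{prop:generaLorenzlike} together with the existence of a dense two-sided orbit in the support of an ergodic measure (\cite{Man87}). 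It then suffices to exhibit a surjection from a set of unstable ``branches'' of cardinality at most $2s_L$ onto the set of singular physical measures.

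First I would record the elementary geometric fact underlying the factor $2$. If $\sigma$ is generalized Lorenz-like then its index is $d_s+1$ (Proposition~\ref{prop:generaLorenzlike}), so its unstable manifold $W^u(\sigma)$ is an immersed disk of dimension $m-(d_s+1)=d_{cu}-1$, and $\closure{W^u(\sigma)}\subseteq\Lambda$ because $\Lambda$ is attracting. Consequently the punctured leaf $W^u(\sigma)\setminus\{\sigma\}$ has exactly two connected components when $d_{cu}=2$ (the two classical unstable separatrices) and a single one when $d_{cu}\ge3$; in all cases at most two. Call these the \emph{branches} $W^u_b(\sigma)$. Summing over the $s_L$ generalized Lorenz-like singularities of $\Lambda$, the total number of branches is at most $2s_L$, with equality exactly in the singular-hyperbolic three-dimensional regime $d_{cu}=2$ that Theorem~\ref{mthm:numberSRB} already covers.

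The heart of the argument is to attach to each branch a single physical measure and to show every singular physical measure is realized this way. I would first prove a \emph{branch-to-measure} lemma: for each branch $b$, the leaf-volume of $W^u_b(\sigma)$ gives full weight to the ergodic basin $B(\mu_b)$ of one physical measure $\mu_b$. This uses that $W^u_b(\sigma)$ is a connected piece of unstable leaf sitting inside the attracting set, so by the absolute continuity of the unstable conditionals its leaf-volume disintegrates almost everywhere into the finitely many physical basins; connectedness and forward invariance of the branch, together with the $u$-Gibbs property, forbid a splitting among distinct basins, leaving a single $\mu_b$. I would then establish \emph{surjectivity}: given a singular physical measure $\mu$ with $\sigma\in\supp\mu$, choose $\mu$-typical regular points $x_n\to\sigma$; their local unstable manifolds converge to $W^u_{loc}(\sigma)$, and since leaf-volume-almost every point of each $W^u_{loc}(x_n)$ lies in $B(\mu)$, the absolute continuity and continuity of the unstable lamination transfer this to leaf-volume-almost every point of some branch $W^u_b(\sigma)$. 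Comparing with the branch-to-measure lemma and using that distinct ergodic basins are disjoint forces $\mu=\mu_b$. Hence $b\mapsto\mu_b$ maps the at most $2s_L$ branches onto all singular physical measures, giving $s\le2s_L$ and the analogue of Theorem~\ref{mthm:numberSRB}. The analogue of Corollary~\ref{mcor:numberSRB} follows formally: every generalized Lorenz-like singularity of $\Lambda=\Lambda_G(U)$ is a singularity of $G$ in $U$, so $s_L\le s_0$; and by the robustness expressed in Theorem~\ref{thm:morales} no such attracting set admits a uniformly hyperbolic (hence nonsingular) sub-attractor carrying a physical measure, so every physical measure is singular and $s\le2s_0$.

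The main obstacle is the branch-to-measure lemma and the limit transfer to the singularity when $d_{cu}\ge3$. For $d_{cu}=2$ the unstable branches are one-dimensional separatrices and everything can be read off the one-dimensional expanding quotient of a cross-section, which is exactly the mechanism of Theorem~\ref{mthm:numberSRB}; here ``connected branch feeds one measure'' is immediate and the limit $x_n\to\sigma$ is controlled by the explicit strong-stable foliation. When $d_{cu}\ge3$ the branch is a genuinely higher-dimensional unstable leaf, and one must rule out that its leaf-volume-generic points distribute among several physical basins, and must propagate an almost-everywhere basin statement across the limit $W^u_{loc}(x_n)\to W^u_{loc}(\sigma)$ up to the singularity. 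I expect both to require the absolute continuity of the unstable lamination and the finiteness of the ergodic decomposition from \cite{araujo_2021}, combined with the homoclinic-class structure of sectional-hyperbolic attracting sets of \cite{ArbBarMor16} to identify the forward closure of each branch with a single chain-transitive class supporting a unique ergodic physical measure.
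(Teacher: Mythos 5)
First, a framing point: the paper does not prove this statement at all --- it is stated as Conjecture~\ref{conj:sectionalhyp} and left open (the paper only records the ingredients from \cite{araujo_2021} and \cite{ArbBarMor16} that make it plausible). So your attempt must stand on its own merits, and it has a genuine gap.

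The gap is the \emph{surjectivity} step of your branch-to-measure correspondence, and it is exhibited by the paper's own sharp example (Subsection~\ref{sec:examples-with-sharp}). There $\Lambda$ has a single Lorenz-like singularity $\sigma_0$ and two ergodic physical measures: $\mu_+$, supported on the geometric Lorenz attractor in $\{z\ge0\}$, and $\mu_-$, whose support accumulates $\sigma_0$ from below, \emph{along the stable direction}. Both unstable separatrices of $\sigma_0$ lie inside the upper attractor, and leaf-volume almost every point of either separatrix belongs to $B(\mu_+)$; no branch of $W^u(\sigma_0)$ ``feeds'' $\mu_-$. Hence your map $b\mapsto\mu_b$ misses $\mu_-$ and cannot bound the number of singular physical measures. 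The error in your limit-transfer argument is twofold: (a) near a singularity the Pesin unstable disks $W^u_{loc}(x_n)$ of $\mu$-typical points $x_n\to\sigma$ do not have uniform size and do not converge to $W^u_{loc}(\sigma)$ --- partial hyperbolicity yields a continuous \emph{stable} lamination on a neighborhood (Proposition~\ref{prop:Ws}), but there is no continuity of the unstable lamination across a singularity; what accumulates on the branches are the \emph{forward flow images} of those disks; and (b) even granting convergence, ``leaf-volume a.e.\ point lies in $B(\mu)$'' is not a closed condition: accumulation of unstable disks transfers membership in $\supp\mu$ (this is precisely why the branches of $\sigma_0$ lie in $\supp\mu_-$ in the example), but never membership in the basin, which is merely a measurable set. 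Indeed, in the example the branches lie in $\supp\mu_-\cap B(\mu_+)$.

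Two further problems. The factor $2$ in the paper's three-dimensional proof (Lemma~\ref{le:221}) does not come from the two unstable separatrices: it comes from the two sides into which $W^{ss}_{loc}(\sigma)$, of codimension one inside $W^s_{loc}(\sigma)$, divides the local stable manifold, combined with absolute continuity of the stable holonomy, which forces two measures accumulating $\sigma$ on the same side to coincide. That mechanism persists for generalized Lorenz-like singularities in any dimension (the index is $d_s+1$, so $W^{ss}$ always has codimension one in $W^s$) and is the natural route to the conjecture; by contrast, your branch count would ``prove'' the stronger bound $s\le s_L$ whenever $d_{cu}\ge3$ (a single branch per singularity), which should itself raise suspicion. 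Your branch-to-measure lemma is also fragile: in dimension three each branch is a single flow orbit, hence trivially feeds at most one basin (possibly none, e.g.\ a homoclinic separatrix), but for $d_{cu}\ge3$ a connected branch of dimension at least two can a priori distribute its leaf volume among several basins --- connectedness is no obstruction for measurable sets such as basins. Finally, your deduction of the analogue of Corollary~\ref{mcor:numberSRB} invokes Theorem~\ref{thm:morales}, which is a statement about $3$-flows; its higher-dimensional extension is itself an open problem in the paper (Conjecture~\ref{conj:nohipattract}), so that step assumes what is not available.
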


Since part of the statement of the results is inspired on
Theorem~\ref{thm:morales} it is only natural to propose the following.

\begin{conjecture}\label{conj:nohipattract}
  The statement of Theorem~\ref{thm:morales} holds also for
  sectional-hyperbolic attracting sets $C^1$ close to
  sectional-hyperbolic attractors, without dimensional restrictions
  (i.e. admitting any combination of $d_s\ge1$ and $d_{cu}\ge2$).
\end{conjecture}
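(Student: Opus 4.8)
\textbf{Proof strategy for Conjecture~\ref{conj:nohipattract}.}
The plan is to adapt the argument underlying Theorem~\ref{thm:morales} to the general sectional-hyperbolic setting, where the center-unstable dimension $d_{cu}\ge2$ is arbitrary. The key point to establish is that if $\Lambda$ is a sectional-hyperbolic attractor of $X$ (admitting a dense regular orbit) with trapping region $U$, then there is a $C^1$ neighborhood $\V$ of $X$ such that for every $G\in\V$ and every attractor $A$ contained in $U$, the set $A$ must contain a singularity. The core obstruction to mimicking the $3$-dimensional proof is that Morales's original argument relies heavily on the one-dimensional unstable foliation and the planar structure of cross-sections; in higher dimensions we must replace these with the robust \emph{sectional-hyperbolic} structure itself.

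First I would exploit the openness of the sectional-hyperbolic condition: the dominated splitting $T_U M=E^s\oplus E^{cu}$ with uniform contraction along $E^s$, domination, and sectional expansion \eqref{eq:sectional} is a $C^1$-robust property of the trapping region. Hence, shrinking $U$ and $\V$ if necessary, every maximal invariant set $\Lambda_G(U)$ for $G\in\V$ carries a continuous invariant splitting with the same contraction, domination, and sectional-expansion constants $C,\lambda,K,\theta$. The essential consequence is that, by the Hyperbolic Lemma (Lemma~\ref{le:hyplemma}), \emph{any} compact invariant subset of $\Lambda_G(U)$ \emph{without equilibria} is uniformly hyperbolic with $\dim E^u=d_{cu}-1\ge1$. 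Therefore the entire problem reduces to excluding the possibility that a \emph{nonsingular} uniformly hyperbolic attractor $A\subset U$ can appear for some $G\in\V$.

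The second and central step is to rule out such a nonsingular hyperbolic attractor by a robust volume/entropy obstruction inherited from the ambient sectional-hyperbolicity. Since $A$ has no equilibria, its unstable bundle $E^u_A$ is volume-expanding (sectional expansion of $E^{cu}$ forces $|\det(D\phi_t\mid E^u_x)|\ge K e^{\theta t}$ after splitting off the flow direction). A hyperbolic attractor $A$ would have to absorb an open set of $U$ and carry a local product structure, so the strong-unstable leaves $W^{uu}(x)$ for $x\in A$ would have dimension $d_u=d_{cu}-1$. The idea is to show that such leaves cannot remain inside $U$: following the volume-expanding unstable direction, an unstable disk must eventually grow and, by the trapping property together with the geometry of $U$ near the original attractor $\Lambda$, it must return close to the original singularities of $X$ — which persist under $C^1$ perturbation with their (generalized) Lorenz-like structure. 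This would force the invariant set $A$ to accumulate on a singularity, contradicting nonsingularity of $A$. Making this precise requires constructing a \emph{robust} family of cross-sections transverse to the flow on which an expanding return map is defined, and arguing that the image of any unstable disk under this return map cannot avoid a neighborhood of $\sing(G)\cap U$.

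\textbf{Main obstacle.}
I expect the hardest part to be the last step: in dimension $3$ the cross-section is two-dimensional and the unstable direction is one-dimensional, so the return map acts on an interval and one can use Morales's one-dimensional expansion estimates directly. In the general sectional-hyperbolic case the unstable disks are $(d_{cu}-1)$-dimensional and the cross-sections are higher-dimensional, so one must replace interval-expansion by a genuinely multidimensional argument controlling how a $d_u$-dimensional unstable disk is stretched and folded by the Poincar\'e return map while remaining subject to sectional expansion. The technical crux is to prove that the closure of the forward orbit of any such unstable disk must intersect the local stable manifold of some persistent singularity, using that the existence of nonempty homoclinic classes in sectional-hyperbolic attracting sets \cite{ArbBarMor16} provides the recurrence needed to drive the disk back toward $\sing(G)$. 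Once this intersection is secured, the singularity lies in the closure of $A$, contradicting the assumption that $A$ is a nonsingular attractor and completing the proof.
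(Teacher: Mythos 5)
There is a genuine gap here, and it is worth being clear about the status of the statement itself: Conjecture~\ref{conj:nohipattract} is posed in the paper as an open problem, with no proof offered, so your proposal cannot be compared against a paper argument; it has to stand on its own, and it does not. Your first reduction is fine: $C^1$-robustness of the dominated splitting with sectional expansion over the trapping region, plus the Hyperbolic Lemma~\ref{le:hyplemma}, correctly reduces the conjecture to excluding a nonsingular uniformly hyperbolic attractor $A\subset U$ for fields $G$ that are $C^1$-close to $X$ (this mirrors the reduction in the paper's Corollary~\ref{cor:Morales}, which in dimension $3$ can then invoke Theorem~\ref{thm:morales}). But your ``second and central step'' is not an argument: the claimed mechanism --- sectional/volume expansion forces an unstable disk to grow and ``return close to the original singularities'' --- is false as a general principle. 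Volume expansion of unstable disks inside a trapping region does not, by itself, push anything toward $\sing(G)$: the paper's own construction in Subsection~\ref{sec:finiteHypatt} (suspended Plykin attractors inside a solid torus) exhibits trapping regions whose maximal invariant sets are nonsingular hyperbolic attracting sets, in which unstable disks expand forever while staying uniformly far from any equilibrium. So any correct proof must use, in an essential and quantitative way, that $U$ is a small neighborhood of the \emph{original transitive singular} attractor $\Lambda$ of $X$ --- e.g.\ that unstable disks of the perturbed flow shadow the original dynamics, whose unstable manifolds accumulate on the persistent (generalized) Lorenz-like singularities by transitivity. Your proposal gestures at ``the geometry of $U$ near the original attractor $\Lambda$'' but never invokes transitivity of $\Lambda$ or the persistence of its stable/unstable configuration, which is precisely where the content of Morales's $3$-dimensional proof lies and precisely what has no known higher-dimensional replacement.

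A second, smaller defect: the appeal to the homoclinic classes of \cite{ArbBarMor16} is a non sequitur. That result produces homoclinic classes of periodic orbits inside sectional-hyperbolic attracting sets; it gives no control on whether the closure of the forward orbit of an unstable disk of a \emph{perturbed} field meets the stable manifold of a singularity. For that kind of conclusion one would need a sectional-Anosov connecting lemma valid in arbitrary dimension, in the spirit of the three-dimensional result of \cite{BM2010} used in the paper's Proposition~\ref{pr:densestablesing}, and no such result is available. Since you yourself flag the last step as the ``main obstacle'' and leave it unresolved, what you have written is a plausible research plan, not a proof: the conjecture remains open after your text exactly where it was open before it.
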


\subsubsection{Extension to $C^1$ smooth vector field}
\label{sec:extension-c1-smooth}

Recently existence and uniqueness of physical/SRB measures was obtained
for any uniformly hyperbolic attractor of a $C^1$ generic
diffeomorphism on any compact manifold by Qiu~\cite{qiu11}.

More recently, Crovisier-Yang-Zhang~\cite[Corollary B.1 \& Theorem
J]{CYZ20} obtained existence of a SRB invariant probability measure
for any sectional-hyperbolic attracting set for a $C^1$ vector field
and, assuming that this SRB measure is unique, they are able to deduce
that it is also the unique physical measure.  This points to the
following possibility.

\begin{conjecture}\label{conj:C1SRB}
  For generic $C^1$ vector fields having a sectional-hyperbolic
  attracting set the number of ergodic physical/SRB measures satisfies
  the same bounds as given by Theorem~\ref{mthm:numberSRB} and
  Corollary~\ref{mcor:numberSRB}.
\end{conjecture}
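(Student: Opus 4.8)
The plan is to reduce the $C^1$ generic statement to the very same separatrix count that underlies Theorem~\ref{mthm:numberSRB}, feeding that combinatorics with the $C^1$ generic ergodic theory now available from Crovisier-Yang-Zhang~\cite{CYZ20} and Qiu~\cite{qiu11} in place of the $C^2$ tools. First I would fix a residual subset $\R\subset\V$ of $C^1$ vector fields on which the following hold simultaneously: every singularity is hyperbolic and every critical element is nondegenerate (Kupka-Smale); the sectional-hyperbolic attracting set $\Lambda=\Lambda_G(U)$ carries a \emph{finite, mutually singular} family of SRB measures coinciding with its ergodic physical measures (the finiteness refinement of~\cite{CYZ20}); and sectional-hyperbolicity is robust together with the absence of hyperbolic attractors in $U$ (the $C^1$ analogue of Theorem~\ref{thm:morales}, namely Conjecture~\ref{conj:nohipattract}). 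On $\R$ these facts give, exactly as in Corollary~\ref{mcor:numberSRB}, that each ergodic physical measure has a singularity in its support; by Proposition~\ref{prop:generaLorenzlike} and the fact that the support of an ergodic measure carries a dense forward and backward orbit~\cite{Man87}, that singularity is necessarily (generalized) Lorenz-like.

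The combinatorial core is to count via unstable separatrices. In the $3$-flow setting a Lorenz-like singularity $\sigma$ has stable index $d_s+1=2$ (Proposition~\ref{prop:generaLorenzlike}), so its unstable manifold $W^u(\sigma)$ is one-dimensional and consists of exactly two separatrices. I would establish: (i) each ergodic physical measure $\mu$ contains a Lorenz-like singularity $\sigma\in\supp(\mu)$, and—since $\supp(\mu)$ is compact, invariant and carries a dense orbit, while $\sigma$ has a genuine unstable direction $\lambda^u>0$—at least one unstable separatrix of $\sigma$ is contained in $\supp(\mu)$; and (ii) distinct ergodic physical measures have essentially disjoint supports. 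Granting (i) and (ii), the assignment $\mu\mapsto(\text{an unstable separatrix in }\supp(\mu))$ is injective into the set of unstable separatrices of Lorenz-like singularities, whose cardinality is $2\,s_L$; hence $s\le 2\,s_L$. Since $s_L\le s_0$ and, $C^1$ generically on $\R$, every physical measure contains a singularity, the bound $s\le 2\,s_0$ of Corollary~\ref{mcor:numberSRB} follows immediately.

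The structural inputs (i) and (ii) are where the $C^1$ difficulty concentrates, and I would secure them on $\R$ as follows. Disjointness in (ii) is immediate from ergodicity once the physical measures are known to be mutually singular, which is part of the finiteness statement extracted from~\cite{CYZ20}. For (i), a chain-transitive piece supporting a physical measure cannot be uniformly hyperbolic (no hyperbolic attractors in $U$), so it must meet $\sing(G)\cap\Lambda$; the dichotomy recalled in Remark~\ref{rmk:notLorenzlike} (a non-Lorenz-like equilibrium is isolated from $\Lambda\setminus\{\sigma\}$) then forces a Lorenz-like $\sigma\in\supp(\mu)$, and the local unstable manifold theorem applied along the dense orbit of $\supp(\mu)$ places a whole unstable separatrix of $\sigma$ inside $\supp(\mu)$.

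I expect the main obstacle to be the passage, in the $C^1$ category, from a single SRB/physical measure (as produced by~\cite{CYZ20}) to a finite, mutually singular family cleanly indexed by the separatrix combinatorics. In the $C^2$ proof this finiteness and the separation of ergodic components rest on the (absolutely continuous) strong-stable foliation and on the Poincar\'e return map to a global cross-section; in $C^1$ these foliations need not be absolutely continuous, so both the finiteness of ergodic physical measures and the cross-section reduction must be recovered by measure-theoretic and entropic means rather than by explicit holonomy. Arranging (i) and (ii) on a \emph{single} residual set—rather than on separate dense $G_\delta$'s—while keeping it compatible with the $C^1$ robustness of sectional-hyperbolicity is the delicate point; once this is in place the counting is purely topological and identical to the $C^2$ argument behind Theorem~\ref{mthm:numberSRB}.
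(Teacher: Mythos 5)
The statement you are trying to prove is Conjecture~\ref{conj:C1SRB}: the paper offers \emph{no proof} of it, so there is nothing to compare your argument against; it must stand on its own. It does not, for two concrete reasons.

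First, the combinatorial core --- counting measures by unstable separatrices --- is broken even in the $C^2$ category. Your injectivity of $\mu\mapsto(\text{an unstable separatrix in }\supp\mu)$ rests on claim (ii), that distinct ergodic physical measures have ``essentially disjoint supports,'' which you derive from mutual singularity. But mutual singularity is a statement about measure, not about supports as sets: mutually singular ergodic measures can have overlapping (even equal) supports. The paper's own sharp examples (Theorem~\ref{mthm:sharpex}, Subsection~\ref{sec:examples-with-sharp}) exhibit exactly this: two distinct ergodic physical measures $\mu_\pm$ whose supports both contain the \emph{same} Lorenz-like singularity $\sigma_0$, and --- since both unstable separatrices of $\sigma_0$ feed the transitive set in $\{z\ge0\}$ while $\supp\mu_-$, being closed, invariant and accumulating $\sigma_0$ off $W^{ss}(\sigma_0)$, must also contain a separatrix --- a separatrix shared by both supports. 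So your assignment is not injective, and the bound $s\le 2s_L$ does not follow from it; in the sharp example your count would collapse to $s\le 1$. The mechanism the paper actually uses (Lemma~\ref{le:221}) is different: it counts the two \emph{sides of approach along the weak-stable direction} of $\sigma_0$, and identifies two measures approaching from the same side via the \emph{absolutely continuous} stable holonomy between their center-unstable leaves. That absolute continuity is precisely the $C^2$ (Pesin/Pesin--Sinai) ingredient unavailable for $C^1$ vector fields, which is why the statement is left as a conjecture.

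Second, your residual set $\R$ is built by assuming statements that are themselves open: the ``$C^1$ analogue of Theorem~\ref{thm:morales}'' is Conjecture~\ref{conj:nohipattract} of the same paper, and the ``finiteness refinement'' of \cite{CYZ20} does not exist --- that paper gives existence of an SRB measure and deduces physicality only under an additional uniqueness hypothesis; finiteness of ergodic physical measures for $C^1$ sectional-hyperbolic attracting sets is exactly the hard open point. You name this as ``the main obstacle'' in your last paragraph, but the proposal does not resolve it; it is assumed into $\R$. So the argument is circular where it is not invalid: the genuinely new $C^1$ content (finiteness, and a holonomy-free replacement for Lemma~\ref{le:221}) is postulated rather than proved.
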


For more on SRB versus physical measures, see
Section~\ref{sec:suppreghyp}.

\subsection{Organization of the text}
\label{sec:organization-text}

In the following, Section~\ref{sec:geompropert} contains descriptions
of the construction of the examples mentioned after the statement of
Corollary~\ref{mcor:numberSRB} including Theorem~\ref{mthm:sharpex}.

In Section~\ref{sec:singhypexist}, we present properties of
singular-hyperbolic attracting sets for $3$-flows in
Subsections~\ref{sec:suppreghyp} and~\ref{sec:coroll-non-existence},
which will be necessary for the proof of the bounds in
Theorem~\ref{mthm:numberSRB} and Corollary~\ref{mcor:numberSRB} in
Subsection~\ref{sec:number-singul-physic}.  

\subsection*{Acknowledgments}

We thank the Mathematics and Statistics Institute of the Federal
University of Bahia (Brazil) for its support of basic research and
CNPq (Brazil) for partial financial support.


\section{Description of classes of examples}
\label{sec:geompropert}

Here we describe the construction of the examples mentioned in
Theorem~\ref{mthm:sharpex}.

\subsection{Connected attracting set containing any finite number
  of hyperbolic attractors}
\label{sec:finiteHypatt}

\subsubsection{Criteria for connectedness of attracting sets}
\label{sec:criter-connect-attra}

We start with a simple criteria to obtain a connected attracting set.

\begin{lemma}\label{le:connectedLambda}
  If $U$ is a connected open subset of $M$ which is a trapping region,
  then the attracting set $\Lambda=\bigcap_{t>0}\closure{\phi_t(U)}$
  is a compact and connected subset.
\end{lemma}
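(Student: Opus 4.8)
The plan is to get compactness for free from the definition and to reduce connectedness to the classical fact that a nested intersection of compact connected sets is connected. Write $K_t:=\closure{\phi_t(U)}$ for $t>0$. Since $U$ is a trapping region, $K_t=\closure{\phi_t(U)}\subset U$ for every $t>0$, so each $K_t$ is a closed subset of the compact manifold $M$ and hence compact; consequently $\Lambda=\bigcap_{t>0}K_t$ is an intersection of compact sets and is compact. The structural point I would prove next is that the family $\{K_t\}_{t>0}$ is \emph{nested decreasing}: for $0<t_1\le t_2$ one writes $\phi_{t_2}(U)=\phi_{t_1}\big(\phi_{t_2-t_1}(U)\big)$, and since the difference $t_2-t_1\ge0$ gives $\phi_{t_2-t_1}(U)\subset\closure{\phi_{t_2-t_1}(U)}\subset U$ by the trapping property, we get $\phi_{t_2}(U)\subset\phi_{t_1}(U)$ and therefore $K_{t_2}\subset K_{t_1}$.

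Next I would check that each $K_t$ is connected. As $M$ is compact the flow is complete, so $\phi_t$ is a homeomorphism (indeed a diffeomorphism) of $M$; hence $\phi_t(U)$ is the continuous image of the connected set $U$ and is connected, and its closure $K_t$ is connected as well. Because the family is nested, for every $t>0$ there is an integer $n\ge t$ with $K_n\subset K_t$, which shows $\bigcap_{t>0}K_t=\bigcap_{n\in\NN}K_n$; thus it suffices to treat the countable decreasing sequence $(K_n)_{n\in\NN}$ of nonempty compact connected sets. Nonemptiness of $\Lambda$ then follows from the finite intersection property (Cantor's intersection theorem for nested nonempty compacta).

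It remains to invoke the topological lemma that the intersection of a decreasing sequence of nonempty compact connected subsets of a Hausdorff space is connected, which is the only substantial step. I would prove it by contradiction: suppose $\Lambda=A\sqcup B$ with $A,B$ nonempty, disjoint, and closed in the compact set $\Lambda$, hence compact. A compact Hausdorff space is normal, so there are disjoint open sets $V\supset A$ and $W\supset B$ with $\Lambda\subset V\cup W$. If no $K_n$ were contained in $V\cup W$, then the sets $K_n\setminus(V\cup W)$ would form a decreasing sequence of nonempty compacta, whose intersection $\Lambda\setminus(V\cup W)$ would be nonempty, contradicting $\Lambda\subset V\cup W$; hence $K_N\subset V\cup W$ for some $N$. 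Then $K_N\cap V$ and $K_N\cap W$ are disjoint nonempty relatively open sets covering $K_N$ (nonempty because $A\subset V$ and $B\subset W$ meet $K_N$), contradicting the connectedness of $K_N$. Therefore $\Lambda$ is connected. The main obstacle is precisely this last separation argument; everything else (compactness, nestedness, connectedness of the individual $K_t$) is routine.
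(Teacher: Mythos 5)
Your proof is correct and follows essentially the same route as the paper: both reduce the problem to the connectedness of the nested compact connected sets $\closure{\phi_t(U)}$ and then run the same separation argument (disjoint open neighborhoods of a hypothetical splitting of $\Lambda$, some $\closure{\phi_t(U)}$ eventually trapped in their union, connectedness forcing it into one piece). Your write-up is somewhat more careful than the paper's --- you explicitly verify nestedness and isolate the classical lemma on nested intersections of compacta --- but the underlying idea is identical.
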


\begin{proof} Indeed, the assumption on $U$ ensures that
  $\phi_t(U)$ is connected and its closure is contained in $U$ for
  all $t\ge T$.  Since the attracting set can be written
\[\Lambda=\bigcap_{t>0}\closure{\phi_t(U)}=\lim_{t\to+\infty}\closure{\phi_t(U)}\]
where the limit is in the Hausdorff topology of compact subsets of
Euclidean $3$-space, then $\Lambda$ is compact and connected.  More
precisely, for each $\epsilon>0$ there exists $T_0>0$ so that for all
$T>T_0$ we get
\[
  \Lambda
  \subset
  \bigcap_{0<t\le T}\closure{\phi_t(B)} =
  \closure{\phi_{T}(B)}
  \subset B(\Lambda, \epsilon)
  =\bigcup_{x\in\Lambda} B(x,\epsilon).
\]
Arguing by contradiction, if we assume that $\Lambda$ is not
connected, then there would exist two disjoint non-empty
closed subsets $\Lambda_1\cup\Lambda_2=\Lambda$. Hence there
would also exist disjoint open neighborhoods $W_i$ of
$\Lambda_i, i=1,2$ with $W=W_1\cup W_2$ an open neighborhood
of $\Lambda$. Thus we can find $T_0>0$ so that
$\closure{\phi_t(U)}\subset W$ for all $t>T_0$ and, by connectedness
of $\phi_t(U)$, this subset is contained in either $W_1$ or
$W_2$. Moreover, by continuity of the flow, $\phi_t(U)$ is
contained in the same $W_i$, let us say $W_1$.

But then $\Lambda=\bigcap_{t>0}\closure{\phi_t(U)}\subset \Lambda_1$
contradicting the existence of nonempty disjoint closed
subsets $\Lambda_1,\Lambda_2$ whose union is $\Lambda$.
\end{proof}

\subsubsection{A Morse-Smale of the disk with an
  attracting set containing finitely many attractors}
\label{sec:hyperb-diffeom-disk}

We consider the plane flow $\psi_t$ defined by the ODE
$\dot x= g'(x) , \dot y=-y$ with $g(x)=\sin(\pi x)$ on the rectangle
$R_k=[-1,2k]\times[-1,1]$ for some fixed integer $k>0$.

Then $R_k$ contains $k+1$ sinks at
$S_0=\{2i-1/2: i=0,\dots,k\}\times\{0\}$ and $k$ saddles
$S_1=\{2i+1/2: i=0,\dots,k-1\}\times\{0\}$ and
$\psi_t(R_k)\subset\interior{R_k}$ for all $t>0$; see the left hand
side of Figure~\ref{fig:MS}.

\begin{figure}[htb]
\centering
\includegraphics[width=8cm]{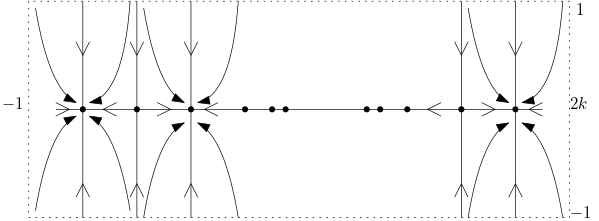}
\includegraphics[width=6cm]{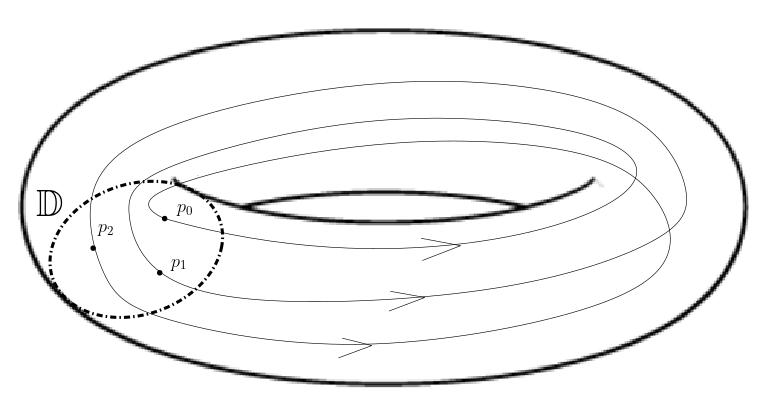}
\caption{\label{fig:MS} The phase portrait of the plane flow $\psi_t$,
  on the left. On the right: suspension of a Plykin attractor for the
  Poincar\'e first return map to the disk $\DD$ of a flow inside a
  solid torus $\DD\times\sS^1$.}
\end{figure}

We note that $R_k$ is diffeomorphic to the disk and the maximal
invariant set $\Lambda=\bigcap_{t>0}\phi_t(R_k)$ given by
$[0,2k-1]\times\{0\}$ is an attracting set.

We also consider the vector field $X$ in the solid torus
$R_k\times\sS^1$ given by $X(x,y,z)=(g'(x),-y,1)$ in what follows.

\subsubsection{The suspension of the Plykin attractor on the disk}
\label{sec:plykin-attractor}

For the construction of the Plykin attractor see Plykin \cite{Pl74},
Robinson\cite{robinson1999} and/or Fisher-Hasselblatt
\cite{fisherHasselblatt12}.  It is easy to see from the construction
presented by Kuznetsov \cite{Kuznetsov2009,kuz2012}, that the
diffeomorphism of the disk proposed by Plykin is diffeotopic to the
identity; see also \cite{plykinOnline} for an animated graphics
presentation.

This allows us to define a smooth $C^\infty$ vector field
$Y=(Y_0,Y_1)$ of the solid torus $\DD\times\sS^1$ which points inward
at the boundary and admitting a cross-sectional disk whose Poincar\'e
first return map is the diffeomorphism $f$ described by Plykin, having
an expanding period three periodic orbit $\{p_0,p_1,p_2\}$; see the
right hand side of Figure~\ref{fig:MS}.

The maximal invariant subset $A=\cap_{t\in\RR}Y_t(\DD\times\sS^1)$ of the
flow generated by $Y$ can be written as the union of two connected
components $P\cup O$, where $P$ is the Plykin (expanding) attractor
and $O$ is a periodic source.

\subsubsection{Attaching the Plykin suspension to the Morse-Smale
  example}
\label{sec:attach-plykin-suspen}

We now ``attach'' this vector field in a neighborhood
$V_s(\epsilon)=B(s,\epsilon)\times\sS^1$ of each sink $s\in S_0$ as
follows, for $\epsilon\in(0,1/100)$ small enough. We find a $C^\infty$
partition of unity $(\Psi_s)_{s\in S_0\cup 0}$ subordinated to the
open cover
$\{V_s(\epsilon):s\in S_0\}\cup R_k\times\sS^1\setminus\cup_{s\in
  S_0}V_s(\epsilon/2)$ so that $\supp\Psi_s\subset V_s(\epsilon)$ and
$\supp\Psi_0\subset R_k\times\sS^1\setminus\cup_{s\in S_0}V_s(\epsilon/2)$. Then
we replace $X$ by the $C^\infty$ vector field
\begin{align*}
  G= \Psi_0\cdot X + \sum_{s\in S_0}
  \Psi_s\cdot \left( s+ \frac{\epsilon}2 Y_0 , Y_1  \right)
\end{align*}
which coincides with a rescaled and traslated version of $Y$ in the
interior of each $V_s(\epsilon)$.

Now the limit set in $R_k\times \sS^1$ with respect to the flow
$\phi_t$ induced by $G$ is given by
\begin{align*}
  L=\closure\{\alpha(x)\cup\omega(x):x\in R_k\times\sS^1\}
  =
  \bigcup_{u\in S_1} u\times \sS^1 \cup\bigcup_{s\in S_0} (P_s\cup O_s),
\end{align*}
where $u\times \sS^1$ is a periodic hyperbolic saddle for each $u\in
S_1$ and $P_s, O_s$ are the Plykin attractor and periodic source
within $V_s(\epsilon)$ for each $s\in S_0$; see Figure~\ref{fig:susPlykins}.

\begin{figure}[htb]
\centering
\includegraphics[width=9cm]{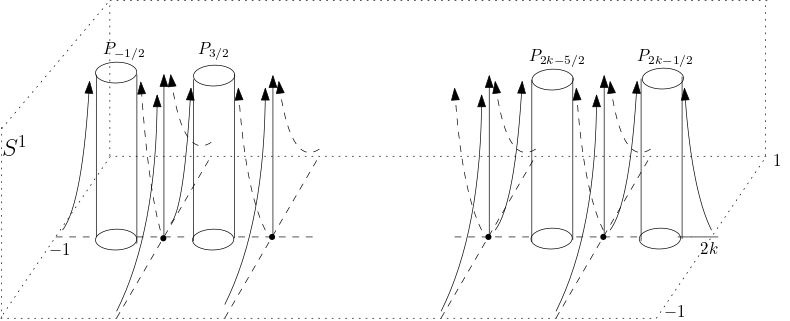}
\caption{\label{fig:susPlykins}A phase portrait of the flow $\phi_t$
  of the vector field $G$, with a suspension of a Plykin attractor in
  the place of each periodic sink of the flow of $X$.}
\end{figure}

If we remove small neighborhoods $U_s$ around each $O_s$, we obtain
that in the connected trapping region
$W=R_k\times \sS^1 \setminus \bigcup_{s\in S_0} U_s$ we have the
attracting set
\begin{align*}
  \Lambda= \bigcap_{t>0}\phi_t(W)
  =\bigcup_{u\in S_1} W^u(u\times\sS^1) \cup\bigcup_{s\in S_0} P_s,
\end{align*}
which is connected by Lemma~\ref{le:connectedLambda}. Moreover,
$\Lambda$ contains $k$ hyperbolic attractors which are suspensions of
the Plykin example.

Since $k>0$ was arbitrarily fixed, we have constructed an attracting
set with no equilibria and any given finite number ($k+1$) of
hyperbolic attractors in a $C^\infty$ smooth $3$-flow.

\subsection{Lorenz-like attracting sets: inequality $s<2 s_L$}
\label{sec:lorenz-attract-set}

From the original work of Lorenz \cite{Lo63} and Sparrow
\cite[Appendix C]{Sp82} it is well known that there exists a
trapping region bounded by an ellipsoid $E$ for an attracting set and
a smaller trapping region bounded by a bitorous $U\subset E$ for the
Lorenz attractor; this was proved later by Tucker using a computer
assisted proof \cite{Tucker,Tu99}.

\begin{figure}[htpb]
\centering
\includegraphics[width=4cm]{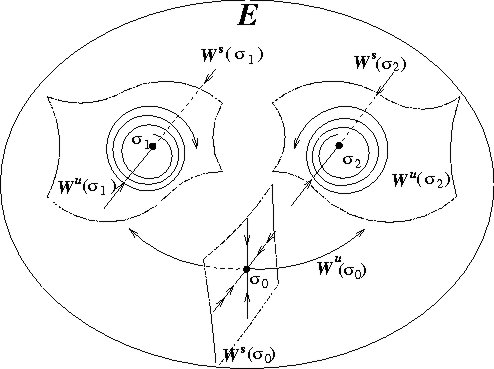}
\quad
\includegraphics[width=4cm]{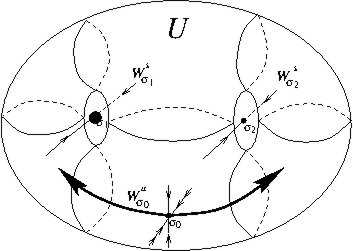}
\caption{\label{fig:bitoro}Local stable and unstable
  manifolds near $\sigma_0,\sigma_1$ and $\sigma_2$, and the
  ellipsoid $E$, on the left; and the trapping bitorus $U$
  on the right.}
\end{figure}

This is depicted in Figure~\ref{fig:bitoro}: the maximal invariant
subset inside $E$ contains, besides the Lorenz attractor, the two
equilibria $\sigma_1,\sigma_2$ with expanding complex eigenvalues
around which the trajectories in the ``lobes'' of the attractor
rotate; see the left hand side of Figure~\ref{fig:Lgeom} for a picture
of the geometric Lorenz attractor \cite{ABS77,Wil79}.

The geometric Lorenz attractor contains a Lorenz-like equilibrium at
the origin and a unique physical measure which is ergodic; see
e.g. \cite[Section 6.3]{Vi97b} or \cite[Section 6]{APPV}. Hence we
obtain an example where the number $s$ of ergodic physical measures
and $s_L$ of Lorenz-like singularities are both equal to one:
$s=1=s_L$.

\begin{figure}[htpb]
\centering
\includegraphics[width=4cm]{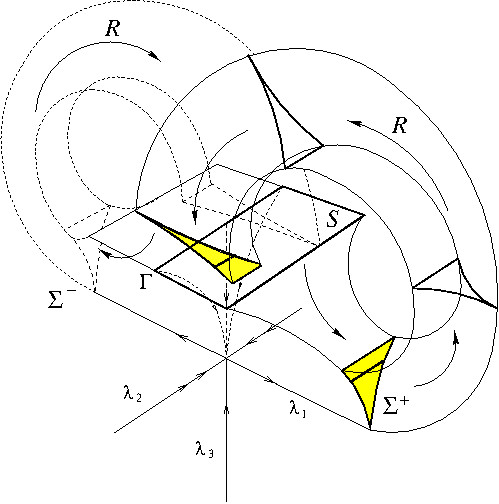}
\quad
\includegraphics[width=6cm]{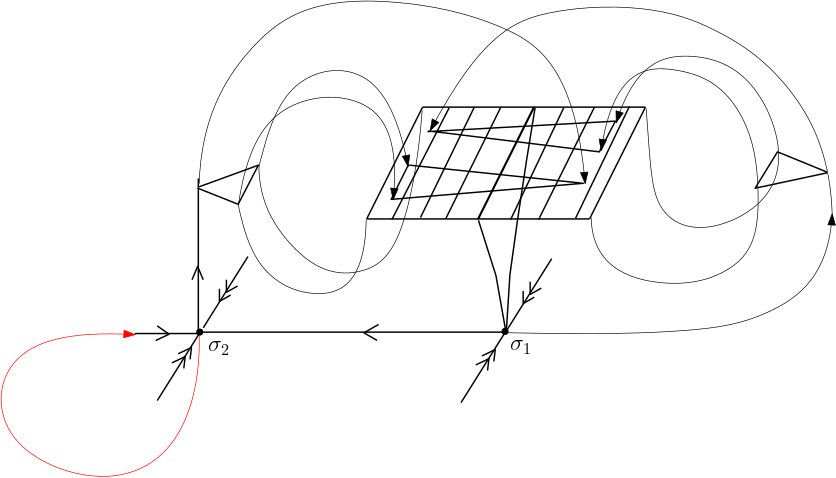}
\caption{\label{fig:Lgeom}The geometric Lorenz attractor on the left,
  with one Lorenz-like equilibrium at the origin and a unique
  (ergodic) physical measure; and an adaptation of the construction on
  the right, providing a singular-hyperbolic attracting set with two
  Lorenz-like equilibria and still a unique physical measure.}
\end{figure}
It is easy to increase the number of Lorenz-like equilibria while
keeping the number of ergodic physical measures, as depicted in the
right hand side of Figure~\ref{fig:Lgeom}, where we have a unique
physical measure and a pair of Lorenz like singularities.

The ``doubling'' of the construction of the geometric Lorenz attractor,
depicted in the left hand side of Figure~\ref{fig:attracting4sing},
provides an attracting set with two transitive geometric Lorenz
components, $H_1$ and $H_2$, above and below the horizontal plane
through the origin. Then the maximal invariant set is
singular-hyperbolic and supports two ergodic physical measures:
$s=2$. Moreover, the set contains three Lorenz-like singularities
marked in the picture: $s_L=3$.

This is a consequence of the existence of a pair of Poincar\'e first
return transformations which have the same properties of the geometric
Lorenz first return map, one for each of the cross-sections presented
in the left hand side of Figure~\ref{fig:attracting4sing}, and so
generate distinct physical measures; see e.g. \cite[Chap. 7, §3]{AraPac2010}.

\begin{figure}[htpb]
\includegraphics[width=6cm]{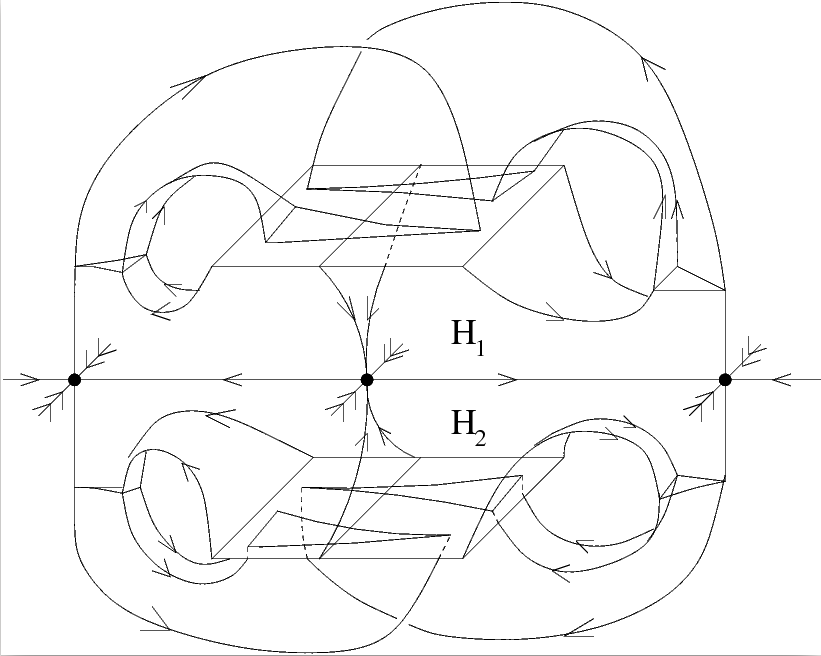}
\includegraphics[width=6cm]{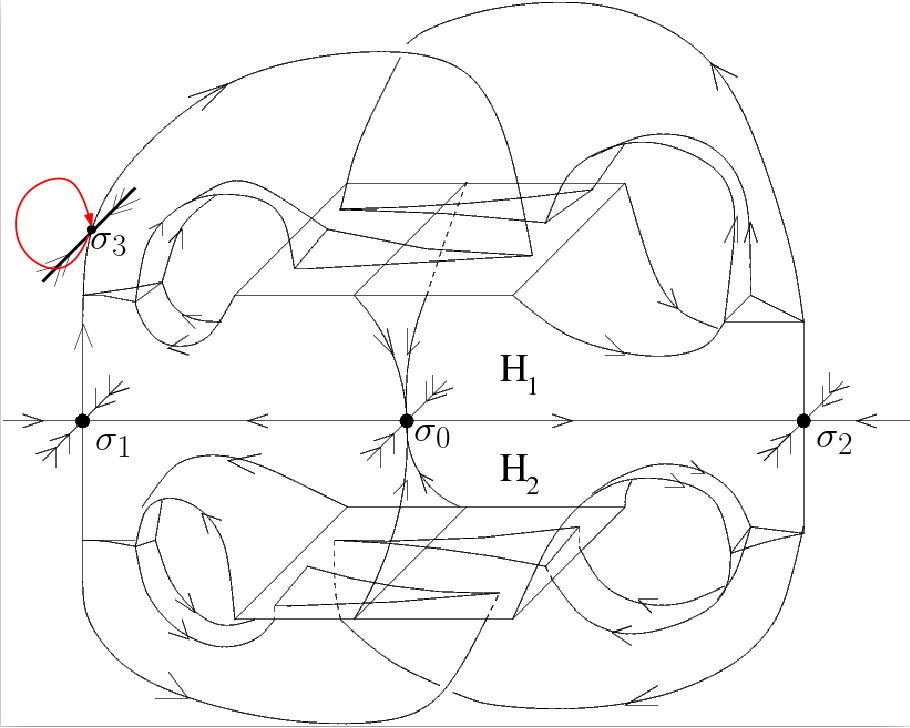}
\caption{\label{fig:attracting4sing} Adaptation of the construction of
  the geometric Lorenz attractor providing a singular-hyperbolic
  attracting set with three Lorenz-like equilibria, having two ergodic
  physical measures, on the left hand sinde. On the right hand side,
  adaptation of this construction with four Lorenz-like and two
  ergodic physical measures.}
\end{figure}

\begin{remark}\label{rmk:moreLorenzlike}
  We can easily increase the number of Lorenz-like equilibria to any
  number we like by a simple adaptation of this construction, as shown
  in the right hand side of Figure~\ref{fig:attracting4sing} where we
  have $s_L=4$ and still only two ergodic physical measures in the
  attracting set. The same trick can be applied to the geometric
  Lorenz construction to obtain any number of Lorenz-like equilibria
  in a singular-hyperbolic attractor, and so with a unique physical
  invariant probability measure.
\end{remark}

\begin{remark}\label{rmk:morenonL}
  We can include in the above construction non-Lorenz-like equilibria
  by extending the trapping region to contain the ``lobes'' around
  which the unstable manifolds of the Lorenz-like equilibria wind.
  This means replacing the solid bitorus by an ellipsoid as the
  trapping region and include the two hyperbolic saddle-focus
  equilibria $\sigma_1, \sigma_2$ in the corresponding attracting set;
  see Figure~\ref{fig:bitoro}.
\end{remark}

\subsection{Example with sharp equality $s=2 s_L=2$}
\label{sec:examples-with-sharp}

Next we again adapt the geometric Lorenz construction to obtain the
equality $s=2\cdot s_L$ with $s_L=1$.

\subsubsection{Overview of the construction}
\label{sec:overvi-constr}

We start with a one-dimensional Lorenz-like transformation with two
expanding fixed repellers at the boundary of the interval; see the
left hand side of Figure~\ref{fig:L1drep}. Then we perform the
geometric Lorenz construction in such a way to obtain this map as the
quotient over the stable leaves of the Poincar\'e first return map to
the global cross-section of a vector field $G_0$; see the right hand
side of Figure~\ref{fig:L1drep}.

  \begin{figure}[htpb]
    \centering
    \includegraphics[width=4cm]{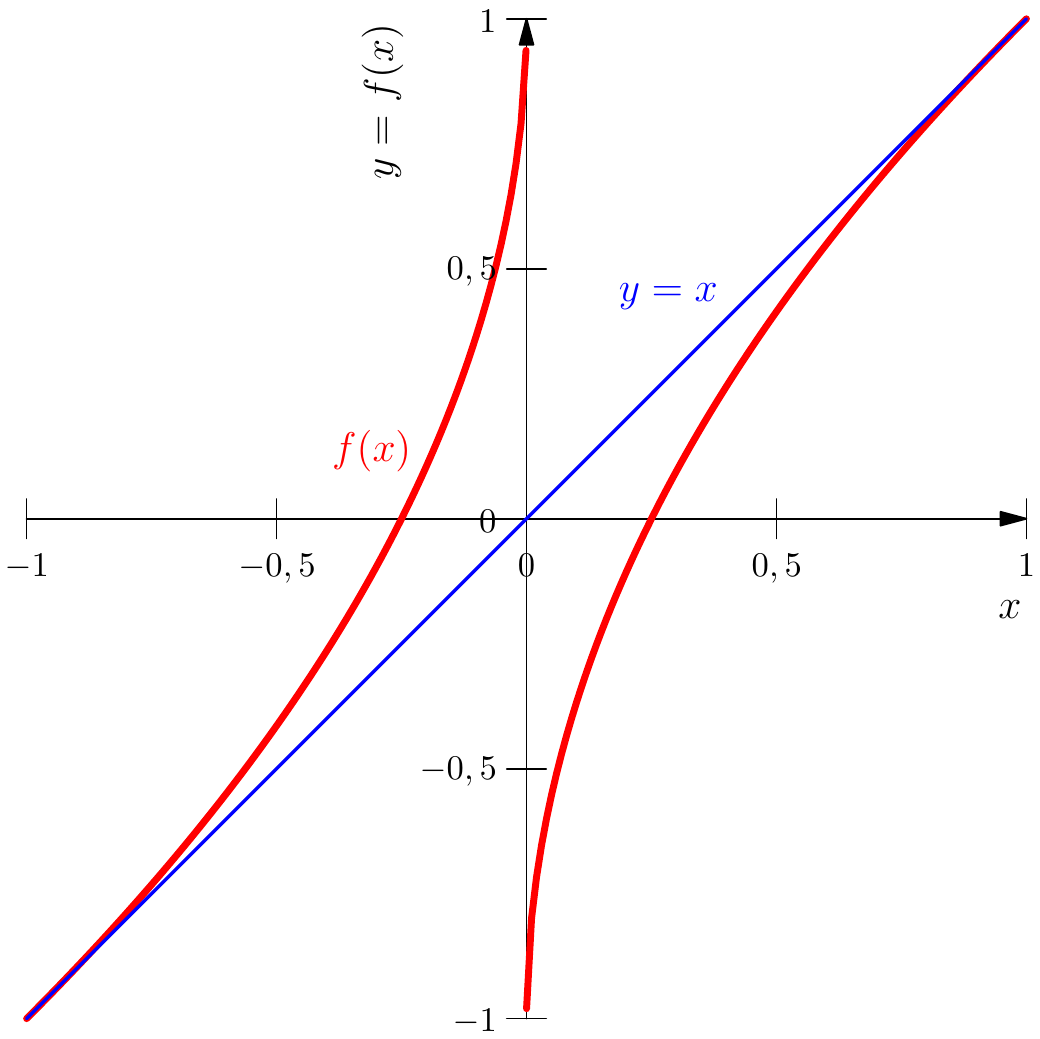}
    \includegraphics[width=5cm]{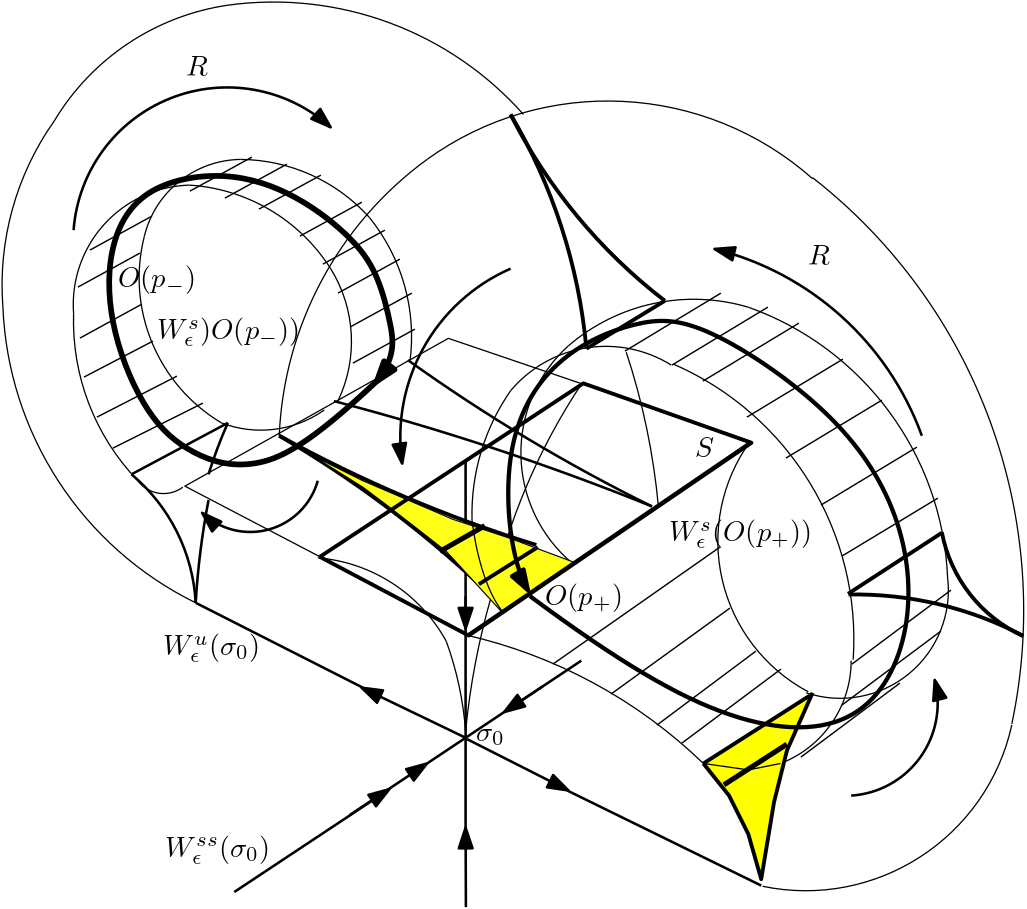}
    \caption{\label{fig:L1drep}Lorenz one-dimensional transformation
      with repelling fixed points at the extremes of the interval on
      the left; and the geometric Lorenz construction with this map as
      the quotient over the contracting invariant foliation on the
      cross-section $S$, with two corresponding periodic saddle-type
      periodic orbits $\cO(p_{\pm})$.}
  \end{figure}

  As usual in the geometric Lorenz construction, we assume that in the
  cube $[-1.1]\times[-1,1]\times[-1,1]$, between the two
  cross-sections $S=[-1,1]^2\times\{1\}$ and
  $S_-=[-1,1]^2\times\{-11\}$, the flow is linear $\dot G_0=A\cdot G_0$
  with $A=\diag\{\lambda_1,\lambda_2,\lambda_3\}$ and a Lorenz-like
  singularity at the origin $\sigma_0$ satisfying
  $\lambda_1<\lambda_3<0<-\lambda_3<\lambda_1$; see e.g. the left hand
  side of Figure~\ref{fig:Lgeom}.
  
  Then we modify the flow to send a neighborhood $D_+$ of the interior
  of a fundamental domain $J$ of the local strong-unstable manifold
  $W^{uu}_{\epsilon}(p_+)$ of a point $p_+$ of the periodic orbit
  $\cO(p_+)$ to the \emph{interior of the cross-section} $S_-$; see
  the left hand side of Figure~\ref{fig:L2d} and the upper half of the
  right hand side of the same figure.

  The boundary of $J$ is a pair of points of a trajectory $\gamma$ of
  the unstable manifold $W^u(p_+)$.  We also send a neighborhood $D_0$
  of a point $q_+$ in the $\omega$-limit of $\gamma$ to
  \begin{enumerate}[(i)]
  \item either the \emph{interior of the cross-section} $S$, so that
    the future trajectories of the points of $D_0$ will accumulate the
    transitive set on $\{z\ge0\}$, as depicted in
    Figure~\ref{fig:L2d};
  \item or to the \emph{interior of another cross-section} of a
    similar distinct attracting set away from $\sigma_0$ (to be able
    to replicate a finite number of similar attracting sets forming a
    connected subset).
  \end{enumerate}
  
  Note that in this way the cusp section $\Sigma_-$, coming from one
  half of $S_-$ in Figure~\ref{fig:L2d}, is eventually sent by the
  flow back to the \emph{interior} of $S_-$.

  To complete the construction of the attracting set:
  \begin{itemize}
  \item we modify the flow to send an open neighborhood of a
    fundamental domain of the local strong-unstable manifold
    $W^{uu}_\epsilon(p_-)$ to the \emph{interior} of $S_-$ in a
    symmetrical way to what as done around $\cO(p_+)$ with respect to
    the origin, by the transformation $T: (x,y,z)\mapsto (-x,-y,z)$
    reminiscent of the symmetry of the original Lorenz system of
    differential equations;
  \item analogously we connect a neighborhood of a point of a
    trajectory inside $W^u_\epsilon(\cO(p_-))$ to
    \begin{enumerate}[(i)]
    \item either the \emph{interior} of the cross-section $S$;
    \item or to the \emph{interior} of another cross-section of a
      similar attracting set away from $\sigma_0$, see below.
  \end{enumerate}
\end{itemize}

\begin{figure}[htpb]
    \centering
    \includegraphics[width=12cm]{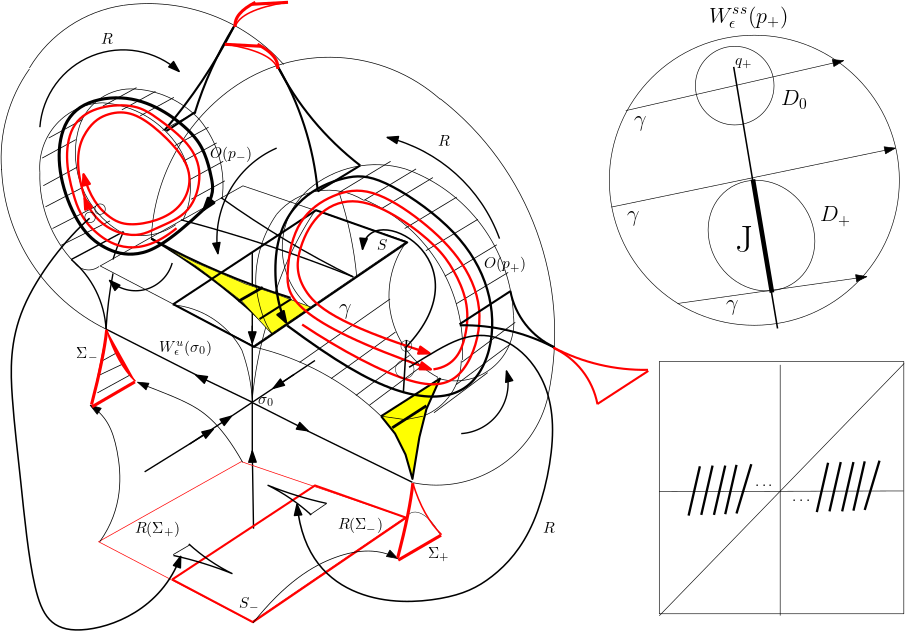}
    \caption{\label{fig:L2d}Connecting the local unstable manifold of
      $\cO(p_+)$ to the interior of the cross-section $S_-$ on the
      left hand side: the connection of the local unstable manifold of
      $\cO(p_-)$ to the interior of $S_-$ is done symmetrically with
      respect to the origin. On the upper right hand side we present a
      detail of the fundamental domain $J$ inside a local
      strong-unstable manifold of a point of $p_+\in\cO(p_+)$; the
      neighborhood $D_+$ of the interior of $J$; and the neighborhood
      $D_0$ of the point $q_+$. On the lower right hand side we
      present a sketch of the return map to $S_-$ after quotienting
      over the contracting leaves.}
  \end{figure}

  We observe that, independently of the choices $(i)$ or $(ii)$, we
  obtain an attracting set accumulating $\sigma_0$ from both ``sides''
  $\{z>0\}$ and $\{z<0\}$, and having two transitive components. The
  return map to $S_-$, after quotienting over the contracting leaves,
  is sketched at the lower right hand side of
  Figure~\ref{fig:L2d}. This attracting set is singular-hyperbolic
  since
  \begin{itemize}
  \item the Poincar\'e first return map to $S$ has the same properties
    as the geometric Lorenz return map; and
  \item the first return map to $S_-$ has a quotient over the
    contracting leaves (whose existence is guaranteed by construction
    similarly to the geometric Lorenz construction) which is piecewise
    expanding with long branches.
  \end{itemize}
  This also provides a pair of ergodic physical measures: $\mu_+$
  whose support is contained in $\{z\ge0\}$; and $\mu_-$ whose support
  intersects $\{z\le0\}$ in a neighborhood of $\sigma_0$. There are
  now the following possibilities to complete the vector field.
  \begin{enumerate}
  \item If we choose $(i)$ above we obtain an attracting set with
    $s=1$ and $s_L=2$; see below for more details.
  \item If we choose $(ii)$ above to connect the unstable manifold of
    either $\cO(p_+)$ or $\cO(p_-)$ (or both) to another similar
    attracting set, then we can obtain examples of connected
    attracting sets with any given number $s_L$ of Lorenz-like
    singularities and precisely $2\cdot s_L$ ergodic physical measures.
  \end{enumerate}
  
\subsubsection{Modifying the geometric Lorenz vector field}
\label{sec:modify-geometr-loren}

More precisely, around the periodic orbits $\cO(p_\pm)$ we extend the
vector field to specify the trapping region as follows; we refer to
Figure~\ref{fig:fundpert}. 

\begin{figure}[htpb]
    \centering
    \includegraphics[width=12cm]{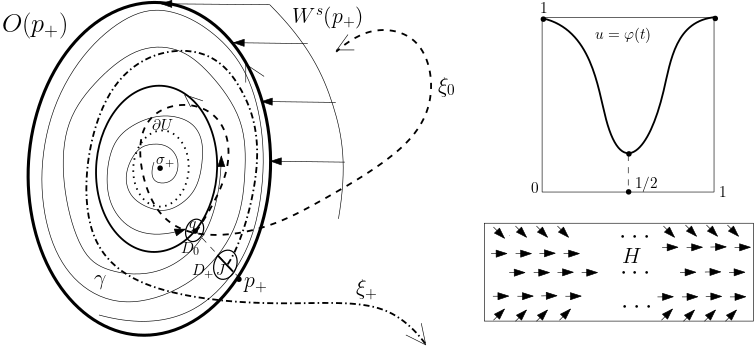}
    \caption{\label{fig:fundpert} The potential defining the vector field to
    be attached to a neighborhood of $\cO(p_+)$ in the upper right hand
    side; a sketch of the perturbation of the vector field in
    $D_0$ and $D_+$ in the left hand side; and a sketch of the vector
    field inside the cylinders in the lower right hand side.}
  \end{figure}

  We consider the $C^\infty$ potential $\vfi$ depicted in the upper
  right hand side of Figure~\ref{fig:fundpert} so that $\vfi<0$ on
  $(0,1)$ and $\vfi(0)=\vfi(1)=0$. We set $Z_0$ to be the vector field
  on the unit disk given by the gradient of $\vfi(x^2+y^2)$ on the
  $xy$-plane.

  Then we attach this vector field to the periodic orbit $\cO(p_+)$ so
  that this orbit corresponds to the repelling periodic orbit of the
  gradient vector field; see the plane vector field at the left hand
  side of Figure~\ref{fig:fundpert}. We note that there exists an
  attracting periodic orbit (periodic sink) for $Z_0$, corresponding
  to the minimum at $1/2$ of $\vfi$.

  We extend the planar field $Z_0$ to a neighborhood in $\RR^3$
  through a contraction on a perpendicular direction, to obtain the
  field $Z_1$ -- we locally identify the stable manifold of $\cO(p_+)$
  with $W^s_{\epsilon_0}(p_+)$ of the initial vector field
  $G_0$. Repeating the construction symmetrically through the
  transformation $T$, this provides a smooth $C^\infty$ vector field
  $G_1$ defined on a ball around the origin.

  At this point, we have a pair of extra saddle-focus equilibria
  $\sigma_+$ and $\sigma_-=T(\sigma_+)$, together with a pair of
  periodic sinks -- incompatible with singular-hyperbolicity -- and
  still no recurrence on $\{z<0\}$ for the vector field $G_1$.  We
  modify $G_1$ by
  \begin{enumerate}
  \item fixing a point $p_+$ in the periodic hyperbolic saddle and
    choosing
    \begin{enumerate}
    \item a fundamental domain $J$ of its local strong-unstable
      manifold $W^{uu}_\epsilon(p_+)$: $J$ is an interval between two
      points of a trajectory $\gamma$ in the local unstable manifold
      $W^u_\epsilon(p_+)$;
    \item an open ball $D_0$ in $\RR^3$ containing the
      interior of $J$ in $W^{uu}_\epsilon(p_+)$ and disjoint from
      $\gamma$;
    \item a point $q_+$ in the periodic sink which is also in the
      closure of $W^{uu}_\epsilon(p_+)$ and in the $\omega$-limit of
      $\gamma$, together with 
    \item a small open ball $D_0$ containing $q_+$ and disjoint from $D_+$.
    \end{enumerate}
    
  \item We consider $C^\infty$ smooth regular curves
    $\xi_+,\xi_0:I=[0,1]\to\RR^3$ as follows:
    \begin{itemize}
    \item $\xi_+$ starting at a point $q_+\in J$ and ending in the
      interior of $S_-$; and 
    \item $\xi_0$ starting at $q$ and ending in the interior of $S$;
    \end{itemize}
    so that they start and end tangent to $G_1$ and move either transversely
    or in the same general direction of $G_1$: for $*\in\{+,0\}$
    \begin{enumerate}
    \item $\dot\xi_*(t)=G_1(\xi_*(t))$ for $t\in\{0,1\}$; and
    \item either $\xi_*\wedge G_1\circ\xi_*\neq0$ or $\langle \xi_* ,
      G_1\circ\xi_*\rangle>0$.
    \end{enumerate}
    
  \item We take open neighborhoods $U_*\supset V_*$ of $\xi_*(I)$ so
    that $U_*\cap D_*=D_*$ and $U_*\supset \closure{V_*}$, for
    $*\in\{+,0\}$. We also assume that $U_*$ contain tubular
    neighborhoods of $\xi_*(I)$ and that we can parameterize $U_*$ as
    a cylinder $\DD\times (0,1)\subset\RR^3$ (where $\DD$ is the open
    unit disk in $\RR^2$). We introduce the ``inward pointing tubular
    vector field'' $H_*$ in $U_*$ for $*\in\{+,0\}$, given by the
    following expression in the coordinates $\DD\times(0,1)$:
    \begin{align*}
      H_*=\left(
      \vec\nabla \vfi\big(40(x^2+y^2)-1/2\big),
      1
      \right),
    \end{align*}
    whose phase portrait is depicted in the lower right hand side of
    Figure~\ref{fig:fundpert}.
    
  \item We let $\psi,\psi_0,\psi_+$ be the partition of unity
    subordinated to the open cover
    $$\RR^3\setminus(U_0\cup U_+), U_0, U_+$$ respectively, so that
    $\supp\psi_*=\closure{U_*}$ and $\psi_*\mid V_*\equiv1$, for
    $*\in\{+,0\}$. We define the perturbation
    \begin{align}\label{eq:defG}
    G=\psi_0\cdot H_0 + \psi_+\cdot H_++\psi\cdot G_1.
    \end{align}
    We note that points in $J\cap D_+$ and in a neighborhood of $q$
    are eventually sent to the other end of the respective cylinder by
    the action of the modified flow, so that, as depicted in
     the left hand side of Figure~\ref{fig:L2d}:
    \begin{enumerate}
    \item the points of $D_+\cap J$ are taken to $S_-$;
   \item points in a neighborhood of $q$ are sent as in $(i)$ to the
     cross-section $S$ as described in the overview.
    \end{enumerate}
  \item Finally, we perform the same construction symmetrically using
    the transformation $T$ around $\cO(p_-)$ to obtain a flow on a
    ball around the origin with the phase portrait depicted in the
    left hand side of Figure~\ref{fig:L2d}.
  \end{enumerate}

  We note that the choice of the curves $\xi_+$ and $\xi_0$ ensures
  that the definition of $G$ through \eqref{eq:defG} avoids the
  introduction of any new equilibrium point, since $G$ is everywhere a
  \emph{linear convex combination of two vector fields which either
    make an acute angle or are transverse}.
  
  The modified vector field $G$ has no periodic sink and has a
  Lorenz-like attracting set containing $\sigma_0$, which accumulates
  this equilibrium only on a neighborhood of $\sigma_0$ in $\{z\ge0\}$,
  and whose Poincar\'e return map to the cross-section $S_-$, after
  quotienting over the contracting stable leaves, is sketched in the
  lower right hand side of Figure~\ref{fig:L2d}.

  The infinitely many branches of this map are due to the fact that
  the cuspidal sections $\Sigma_\pm$ are sent by the flow close to the
  stable manifold of the periodic saddles $\cO(p_\pm)$, which winds
  around the orbit. The Inclination Lemma ensures that \emph{each
    fundamental domain of the local unstable manifold} of $\cO(p_+)$
  in a neighborhood of the local unstable manifold
  $W^u_\epsilon(\sigma_0)$ inside $\Sigma_\pm$ eventually accumulates
  in $D_+$ (and its symmetric $T(D_+)$ corresponding to
  $\cO(p_-)$). These pieces of $\Sigma_\pm$ are then sent back to the
  \emph{interior} of $S_-$ as in the left hand side of
  Figure~\ref{fig:L2d}.

  To complete the example, we specify the trapping region: take a
  large ellipsoid like $E$ in Figure~\ref{fig:bitoro} centered around
  $\sigma_0$ encompassing the recurrence region with negative $z$; and
  remove a neighborhood of the stable manifold of the saddle-foci
  $\sigma_\pm$ obtaining a bitorus like $U$ of
  Figure~\ref{fig:bitoro}, which is a trapping region.

  The maximal invariant subset $\Lambda_G(U)$ is a connected
  singular-hyperbolic attracting set with one Lorenz-like singularity
  $\sigma_0$ and, due to the existence of two independent Poincar\'e
  return maps, we obtain a pair $\mu_\pm$ of ergodic physical measures
  for the flow. The singular-hyperbolicity of the trajectories through
  the cylinders around $\xi_+,\xi_0$ is a consequence of the following
  properties:
  \begin{itemize}
  \item the $\alpha$-limit is one periodic hyperbolic saddle, or the
    singular-hyperbolic transitive set with cross-section $S_-$, or is
    outside of the trapping region; and 
  \item the $\omega$-limit is the singular-hyperbolic transitive set
    with cross-section $S$; and moreover
  \item in dimension three, the dimensions of the singular-hyperbolic
    splitting, at the $\alpha$-limit and at the $\omega$-limit, are
    compatible.
  \end{itemize}
  
  For future use, we can assume without loss of generality that $U$ is
  contained in the cube $Q_0=[-4,4]^3$ and $G$ is defined in $Q_0$.

  \begin{remark}\label{rmk:4torus}
    The boundary of the trapping region of this attracting set is a
    bitorus with two handles around the arcs $\xi_-$ and $T(\xi_-)$,
    thus a $4$-torus.
  \end{remark}

  \subsection{Examples with sharp equality $s=2s_L$ with any given
    $s_L>1$}
\label{sec:examples-with-sharp-1}

To build an attracting set with any given finite number $s_L>1$ of
Lorenz-like singularities and exactly $2s_L$ ergodic physical
measures, we ``copy and paste'' the previous construction $s_L$ times
translated along the $x$-axis and connect the attracting sets around
each equilibria through the perturbation $(ii)$ loosely described in
the previous Subsection~\ref{sec:overvi-constr}; see
Figure~\ref{fig:atracti}. For that we choose the regular curve $\xi_0$
as in the previous section, except that $\xi_0$ now ends at a
cross-section above a different Lorenz-like singularity.

\begin{figure}[htpb]
    \centering
    \includegraphics[width=12cm]{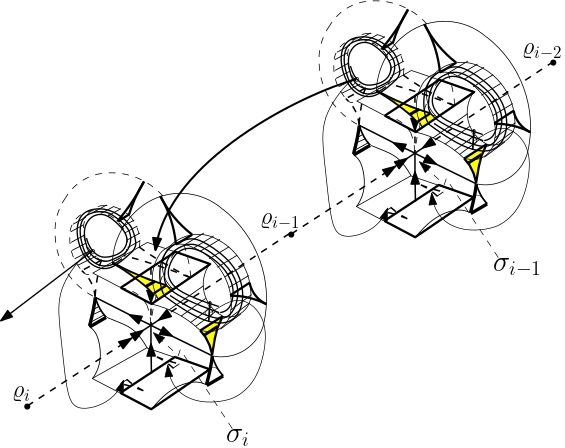}
    \caption{\label{fig:atracti} The equilibria $\sigma_i$ and
      $\sigma_{i-1}$ between repelling equilibria
        $\varrho_i, \varrho_{i-1}$ and $\varrho_{i-2}$ together with
        the nearby maximal invariant subsets; and the connection
        between $D_0^-$ near $\cO(p_{i-1}^-)$ with the cross-section
        near $\sigma_i$.}
  \end{figure}

  More precisely, similarly to the construction presented in
  Subsection~\ref{sec:finiteHypatt}, we start with a Morse-Smale
  vector field $X_0$ defined by the ODE
  $\dot x = g'(x/20) , \dot y =y, \dot z=-z$ on the box
  $R_k=[-10,40k]\times [-5,5]\times[-5,5]$ for some fixed integer
  $k=s_L-1>0$, where $g:\RR\to\RR$ is the same function defined in
  Subsection~\ref{sec:lorenz-attract-set}.

  We obtain $k+1=s_L$ Lorenz-like singularities at
  $S_0=\{\sigma_i=(40i-5,0,0):i=0,\dots,k\}$ and $k$ non-Lorenz-like
  singularities at $S_1=\{\varrho_i=(40i+5,0,0) : i=0,\dots,k-1\}$; see
  Figure~\ref{fig:SpaceMorseSmale}.

\begin{figure}[htpb]
    \centering
    \includegraphics[width=9cm]{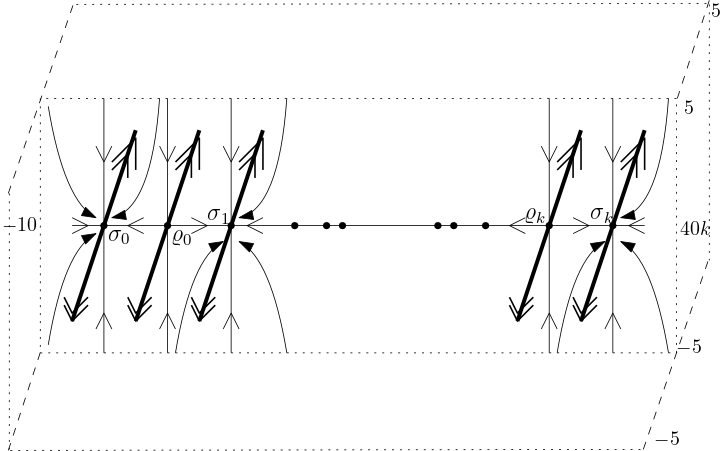}
    \caption{\label{fig:SpaceMorseSmale} The phase portrait of the
      Morse-Smale $3$-dimensional vector field at the base of the
      construction of the example, with Lorenz-like
      $\sigma_i, i=0,\dots,k$ and non-Lorenz-like
      $\varrho_i, i=0,\dots,k-1$ singularities.}
  \end{figure}

  We attach the vector field constructed in
  Subsection~\ref{sec:examples-with-sharp} at every box
  $Q_i=[\sigma_i-4,\sigma_i+4]\times[-4,4]^2$ to the Morse-Smale
  vector field $X_0$, as follows. We consider the open cover
  $W_i= (\sigma_i-5,\sigma_i+5)\times(-4,4)^2$ with $i=0,\dots,k$,
  and $R_k\setminus\sum_i W_i$; and let the corresponding
  $C^\infty$ partition of unity be $\psi_i, i=0,\dots,k$ and $\psi_-$,
  subordinated to this open cover. We define the $C^\infty$ vector
  field
  \begin{align*}
    X_1(w)= \psi_-(w)\cdot X_0(w) +
    \sum_{i=0}^k \psi_i(w)\cdot G(w+\sigma_i) 
  \end{align*}
  on $R_k$. This vector field has $k+1=s_L$ copies of the attracting
  set constructed in Subsection~\ref{sec:examples-with-sharp},
  together with non-Lorenz-like singularities between these attracting
  sets.

  Next we modify $X_1$ into a field $X_2$ so that we obtain a
  \emph{connected} attracting set containing the union of the
  transitive pieces of $X_1$.
  
  The attracting set around the ``last'' Lorenz-like equilibrium
  $\sigma_{s_L}$ will keep the connection from $D_0$ to the
  cross-section ``above'' the same equilibrium. The attracting sets
  around the singularity $\sigma_i$ will be modified according to
  option $(ii)$ (from the overview Subsection~\ref{sec:overvi-constr})
  to have a connection from $D_0$ to the cross-section ``above'' the
  ``next'' equilibrium $\sigma_{i+1}$, for $i=0,\dots,s_L-1$; see
  Figure~\ref{fig:atracti}.
  
\begin{figure}[htpb]
    \centering
    \includegraphics[width=9cm]{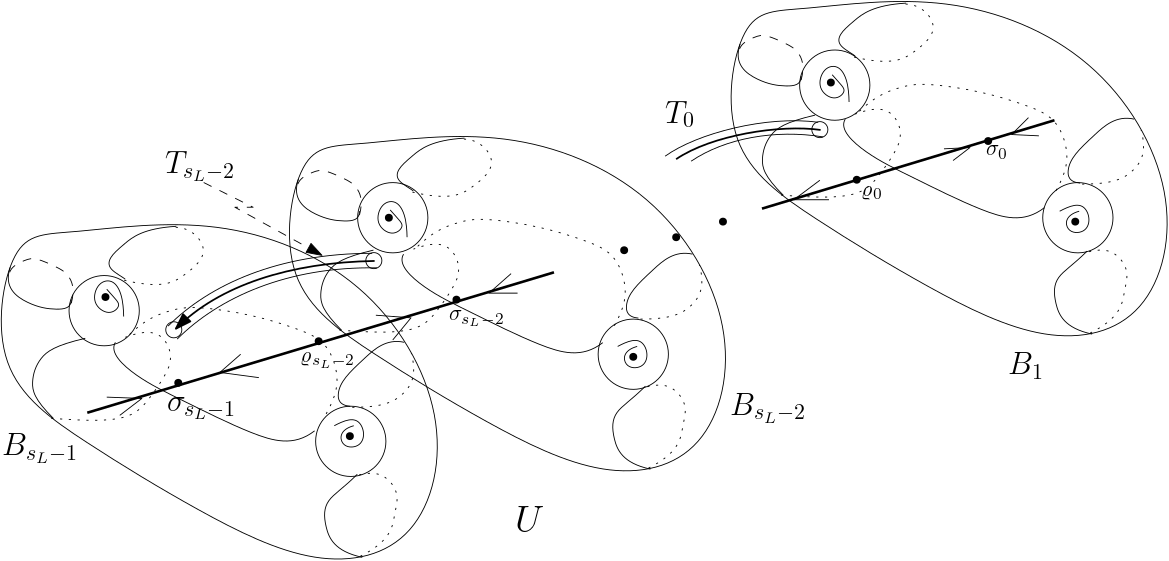}
    \caption{\label{fig:trapping} The trapping region $U$ for the
      attracting set with $2\cdot s_L=s>2$ ergodic physical
      measures, given by  the union of the $4$-tori
      $B_i$ and tubular neighborhoods $T_i$. The repelling equilibria
      are not contained in $U$.}
  \end{figure}

  The trapping neighborhood $U$ will be the union of bitori $B_i$
  shown in Figure~\ref{fig:trapping} around each $\sigma_i$, where a
  neighborhood around all the saddle-foci has been removed, together
  with the ``tubular'' neighborhoods $T_i$ connecting a trajectory of the
  unstable set of $\cO(p_i^-)$ to the cross-section above  $\sigma_{i+1}$
  for $i=0,\dots, s_L-1$.

  This provides exactly a pair of ergodic physical measures containing
  each Lorenz-like equilibrium in their support. The connections
  between the unstable manifolds of periodic orbits in the support of
  one ergodic measure and the cross-section of the flow around a
  different equilibrium, ensure that the maximal invariant subset
  $\Lambda$ within $U$ will be connected.

This completes the proof of Theorem~\ref{mthm:sharpex}.

\begin{remark}\label{rmk:boundary}
  We note that the boundary of the topological basin of attraction of
  this attracting set is diffeomorphic to a $4\cdot s_L$-tori (a
  topologically connected sum of $4s_L$ tori; or a ``torus with $4s_L$
  holes'').
\end{remark}

  \begin{remark}\label{rmk:nonLlike}
    If we do not remove the neighborhoods around the saddle-foci in
    Figure~\ref{fig:trapping}, that is, if we replace each $4$-torus by
    an ellipsoid containing the maximal invariant set around
    $\sigma_i$ and the nearby cross-sections, then we include in the
    new attracting set $\widetilde\Lambda$ all the saddle-focus
    equilibria plus some trajectories connecting these singularities
    with the transitive set accumulating on $\sigma_i$ in $\{z<0\}$.

    This $\widetilde\Lambda$ is a singular-hyperbolic attracting set
    with the same number of Lorenz-like singularities and ergodic
    physical measures as before, but containing a pair of
    non-Lorenz-like hyperbolic equilibria for each Lorenz-like
    equilibrium.
  \end{remark}




  \section{The bound on the number of ergodic physical measures}
\label{sec:singhypexist}

Here we start the proof of Theorem~\ref{mthm:numberSRB} and
Corollary~\ref{mcor:numberSRB}. After some preliminaries, the proof is
presented in Subsection~\ref{sec:number-singul-physic}.

The definition of sectional-hyperbolicity ensures that every invariant
probability measure supported in a sectional-hyperbolic set is a
hyperbolic measure. Moreover, if the vector field is smooth (at least
of class $C^2$) from the proof \cite[Theorem A]{araujo_2021} -- or of
\cite[Theorem B, §4]{APPV} or explicitly from \cite[Theorem
1.5]{Sataev2010} for $d_{cu}=2$ -- we get the first part of the
following statement.

\begin{theorem}
  \label{thm:APPV}
  Every sectional-hyperbolic attracting set for a $C^2$ smooth flow
  admits finitely many $\mu_0, \dots,\mu_k$ ergodic physical invariant
  measures which are SRB measures for the system. Moreover, the union
  of the ergodic basins of these measures covers a full Lebesgue
  measure subset of the topological basin of attraction of $\Lambda$.

  In addition, the support of an ergodic physical/SRB measure without
  singularities contained in $\Lambda$ is a hyperbolic attractor.
\end{theorem}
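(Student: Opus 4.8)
The plan is to establish the final assertion, since the first paragraph is quoted from \cite{araujo_2021, APPV, Sataev2010}. Write $\mu$ for an ergodic physical/SRB measure with $\supp(\mu)\cap\sing(G)=\emptyset$ and set $H=\supp(\mu)$. First I would record the basic structure of $H$: it is compact and $\phi_t$-invariant, and because $\mu$ is ergodic for the invertible flow, $H$ is \emph{transitive}, being the closure of the forward and of the backward orbit of a $\mu$-generic point (see \cite{Man87}). Since $H\subset\Lambda$ is a compact invariant subset of a sectional-hyperbolic set, it inherits the partially hyperbolic splitting with sectional-expanding center-unstable bundle; as $H$ carries no equilibria by hypothesis, the Hyperbolic Lemma (Lemma~\ref{le:hyplemma}) upgrades this to a uniform hyperbolic splitting $T_HM=E^s\oplus\RR\{G\}\oplus E^u$, so that each $x\in H$ has local strong-stable and strong-unstable manifolds $W^{ss}_\epsilon(x)$, $W^{uu}_\epsilon(x)$ of uniform size.

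Next I would use the SRB property to show that $H$ is \emph{unstable-saturated}. Being SRB, the conditional measures $\mu_x$ of $\mu$ along the local strong-unstable manifolds are absolutely continuous with respect to the induced Riemannian length and have positive density. For $\mu$-a.e.\ $x$ the measure $\mu_x$ gives full mass to $H\cap W^{uu}_\epsilon(x)$, since $\mu(H)=1$ and $H$ is closed; positivity of the density then forces $H\cap W^{uu}_\epsilon(x)$ to have full length in $W^{uu}_\epsilon(x)$, and since its complement is relatively open with zero length it is empty, whence $W^{uu}_\epsilon(x)\subset H$. By continuity of the strong-unstable lamination and density of these $x$ in $H$, the inclusion holds for every $x\in H$, and flow-saturation gives $W^u(x)=\bigcup_{t\in\RR}\phi_t\big(W^{uu}(x)\big)\subset H$.

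With $H$ transitive, uniformly hyperbolic and unstable-saturated, the final step is to recognize $H$ as an attracting set. Here I would exploit that $\mu$ is \emph{physical}: its basin $B(\mu)$ has positive volume, and by absolute continuity of the strong-stable foliation of the hyperbolic set $H$, the union of local stable manifolds through $B(\mu)$ fills an open neighborhood of $H$ up to a zero-volume set. Combined with the unstable saturation $W^u(x)\subset H$, this yields a ``stable tube'' neighborhood $U$ of $H$ with $\closure{\phi_t(U)}\subset U$ for all large $t$ and $\bigcap_{t>0}\closure{\phi_t(U)}=H$, so that $H$ is an attracting set; being transitive, $H$ is a hyperbolic attractor. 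Nontriviality is automatic, for a physical measure has a positive-volume basin, so were $H$ a single periodic orbit it would be a periodic sink, contradicting the volume expansion of $\Lambda$ (Remark~\ref{rmk:noisol}).

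I expect the genuine difficulty to lie in the two saturation steps rather than in the hyperbolicity, which is immediate from Lemma~\ref{le:hyplemma}. Passing from absolute continuity of the unstable conditionals to $W^{uu}_\epsilon(x)\subset H$, and from positive basin volume to the trapping neighborhood $U$ whose maximal invariant set is exactly $H$, both require careful use of the local structure of the hyperbolic set (expansiveness, and continuity and absolute continuity of the invariant laminations); these are precisely the places where the SRB/physical hypothesis, rather than mere ergodicity, is essential.
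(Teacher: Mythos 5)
Your proposal is correct and follows essentially the same route as the paper's own proof (Proposition~\ref{pr:regSRBhypattract}): Hyperbolic Lemma to upgrade to uniform hyperbolicity on the singularity-free support, absolute continuity and strict positivity of the unstable conditionals to get unstable saturation via density plus continuity of the lamination, and then the stable-tube/local-product-structure argument to exhibit a trapping neighborhood. The only cosmetic difference is that the paper delegates the final attractor recognition to Bowen--Ruelle \cite[Proposition 5.4 \& Theorem 5.6]{BR75} where you sketch the trapping region by hand, and the paper first rules out non-Lorenz-like equilibria via Remark~\ref{rmk:notLorenzlike} rather than assuming the support is singularity-free outright.
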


In fact, the same result is true in higher dimensions; see
\cite{araujo_2021}. The last statement of Theorem~\ref{thm:APPV} is
not contained in \cite{APPV} and for completeness we provide a proof
in the following Subsection~\ref{sec:suppreghyp} together with the
definition of SRB measure.

\subsection{The support of a non-singular physical measure is a
  hyperbolic attractor}
\label{sec:suppreghyp}

Similarly to the uniformly hyperbolic setting (see e.g. \cite[Theorems
7.4.8 \& 7.4.10]{fisherHasselblatt12}), in the sectional-hyperbolic
setting we have that physical and SRB measures coincide.

\begin{theorem}
  \label{thm:appv-attracting}
  Let $\Lambda$ be a sectional-hyperbolic attracting set for
  a $C^2$ vector field $X$ with the open subset $U$ as
  trapping region. Then
  \begin{enumerate}
  \item there are finitely many ergodic physical
    measures $\mu_1,\dots,\mu_k$ supported in $\Lambda$ such
    that the union of their ergodic basins covers $U$
    Lebesgue almost everywhere:
    \begin{align}\label{eq:appv-attracting}
      \m\left(U\setminus\big(\cup_{i=1}^k
      B(\mu_i)\big)\right)=0.
    \end{align}
  \item Moreover, for each $X$-invariant ergodic probability
    measure $\mu$ supported in $\Lambda$ the following are
    equivalent
    \begin{enumerate}
    \item
      $h_\mu(X_1)=\int\log|\det DX_1\mid_{E^{cu}}|\,d\mu>0$;
    \item $\mu$ is a $SRB$ measure, that is, there is a positive
      Lyapunov exponent at $\mu$-a.e. point and $\mu$ admits an
      absolutely continuous disintegration along the corresponding
      unstable manifolds;
    \item $\mu$ is a physical measure, i.e., its basin
      $B(\mu)$ has positive Lebesgue measure.
    \end{enumerate}
  \item In addition, the family $\EE$ of all $X$-invariant
    probability measures which satisfy item (2) above is the
    convex hull
    $ \EE=\{\sum_{i=1}^k t_i \mu_i : \sum_i t_i=1; 0\le
    t_i\le1, i=1,\dots,k\}.  $
  \end{enumerate}
\end{theorem}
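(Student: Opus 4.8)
The plan is to first construct the physical measures of item~(1) through the reduction to a piecewise expanding quotient map, and then to obtain the equivalences in item~(2) from Pesin--Ledrappier--Young theory, using that every invariant measure supported in $\Lambda$ is hyperbolic.

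For item~(1), I would use that partial hyperbolicity provides a continuous $\phi_t$-invariant contracting stable foliation $\cF^s$ on a neighborhood of $\Lambda$ (see Subsection~\ref{sec:existence-stable-fol}). Choosing a finite family of cross-sections transverse to the flow and projecting the Poincar\'e return map along the leaves of $\cF^s$ yields, by the sectional-expansion property \eqref{eq:sectional}, a piecewise expanding quotient map with bounded distortion. The classical Lasota--Yorke theory then provides finitely many ergodic absolutely continuous invariant probabilities; saturating these along stable leaves and suspending by the return time produces $\mu_1,\dots,\mu_k$, while the a.e.\ covering \eqref{eq:appv-attracting} follows because Lebesgue-a.e.\ point of $U$ eventually enters the cross-sections and equidistributes to one of them. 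This is precisely the content of \cite[Theorem B]{APPV} and \cite[Theorem A]{araujo_2021}. By construction each $\mu_i$ satisfies all three conditions of item~(2).

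For item~(2) I would argue through the time-one map $X_1$. Since $\Lambda$ is sectional-hyperbolic, the only neutral Lyapunov exponent at any ergodic $\mu$ is the one along the flow direction $\RR\{G\}\subset\ecu$; moreover, applying \eqref{eq:sectional} to a plane spanned by $G$ and any other direction of $\ecu$ forces the remaining exponents in $\ecu$ to be at least $\theta>0$. Hence $\int\log|\det DX_1\mid_{\ecu}|\,d\mu$ equals the sum of the positive Lyapunov exponents of $\mu$. The Margulis--Ruelle inequality gives $h_\mu(X_1)\le\int\log|\det DX_1\mid_{\ecu}|\,d\mu$ without further hypotheses, and Ledrappier's theorem states that equality (condition~(a)) holds if and only if $\mu$ has absolutely continuous conditional measures along unstable manifolds, which is condition~(b). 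The implication $(b)\Rightarrow(c)$ uses the absolute continuity of the stable holonomy, a consequence of the $C^2$ regularity, to transport the absolute continuity of $\mu$ from the unstable manifolds to a positive-volume set of points forward-asymptotic to $\mu$, so that $\m(B(\mu))>0$. For $(c)\Rightarrow(b)$ I would invoke item~(1): if $\mu$ is ergodic with $\m(B(\mu))>0$, then $B(\mu)$ meets $\cup_i B(\mu_i)$ in positive volume, and since the ergodic basins of distinct ergodic measures are disjoint, $\mu$ coincides with some $\mu_i$ and is therefore SRB.

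Item~(3) follows from the ergodic decomposition. Both $\mu\mapsto h_\mu(X_1)$ and $\mu\mapsto\int\log|\det DX_1\mid_{\ecu}|\,d\mu$ are affine in $\mu$, so equality in~(a) for $\mu\in\EE$ passes to $\omega$-almost every ergodic component; each such component then satisfies~(a), hence is physical by $(a)\Rightarrow(b)\Rightarrow(c)$, and equals some $\mu_i$. Thus $\mu$ is a convex combination of $\mu_1,\dots,\mu_k$, and conversely affinity shows every such combination satisfies~(a), giving the asserted description of $\EE$. I expect the main obstacle to be the implication $(b)\Rightarrow(c)$: the absolute continuity of the stable foliation is delicate near the Lorenz-like singularities, where the stable leaves accumulate on the strong-stable manifold $W^{ss}_\sigma$ and the foliation degenerates, so confining the analysis to the cross-sections, away from the equilibria, is what makes the transfer of absolute continuity work.
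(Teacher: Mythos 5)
Your sketch is correct and in substance takes the same route as the paper, whose entire proof of this theorem is the citation of \cite[Theorem 1.7]{ArSzTr} (with \cite{araujo_2021} for higher center-unstable dimension): the argument you outline --- quotienting the Poincar\'e return map over the stable foliation to a piecewise expanding map, Lasota--Yorke and saturation/suspension for item (1), Margulis--Ruelle plus Ledrappier's entropy characterization and absolute continuity of the stable holonomy for item (2), and affinity under ergodic decomposition for item (3) --- is precisely the content of those references. Two minor corrections: the one-dimensional quotient maps here have \emph{unbounded} derivative near the images of the singular leaves rather than classical bounded distortion (this is handled by generalized bounded-variation arguments in the cited works), and the foliation whose absolute continuity you need for $(b)\Rightarrow(c)$ is the strong-stable one of Proposition~\ref{prop:Ws}, which is uniformly Lipschitz on the whole trapping region and does not degenerate at the equilibria --- the delicacy near $W^{ss}_\sigma$ concerns the center-unstable structure, not the stable holonomy.
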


\begin{proof}
  This is \cite[Theorem 1.7]{ArSzTr} proved for a sectional-hyperbolic
  attracting set whose center-unstable dimension is two:
  $\dim E^{cu}=2$. The proof is the same in any dimension; see
  \cite{araujo_2021}.
\end{proof}

The following is a consequence of the Hyperbolic
Lemma~\ref{le:hyplemma} together with the absolutely continuous
disintegration of physical/SRB measures.

\begin{proposition}\label{pr:regSRBhypattract}
  Let $\mu$ be a physical/SRB ergodic probability measure whose
  support $\supp\mu$ is contained in a sectional-hyperbolic attracting
  set $\Lambda=\Lambda_G(U)$. If $\supp\mu$ does not contain
  Lorenz-like singularities, then $\supp\mu$ is a hyperbolic
  attractor.
\end{proposition}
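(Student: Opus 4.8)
The plan is to establish, for $K:=\supp\mu$, the following chain of properties in order: $K$ is transitive, $K$ contains no equilibria, $K$ is therefore uniformly hyperbolic, and finally $K$ is an attractor. The last statement of Theorem~\ref{thm:APPV} is exactly this proposition, so the argument splits naturally into showing that the hypothesis forces the support to be singularity-free, and then upgrading a transitive hyperbolic set to a hyperbolic attractor via the SRB structure.

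First I would recall that, $\mu$ being ergodic for the continuous invertible dynamics obtained by restricting the flow to the compact invariant set $K$, its support admits a point $y$ whose forward (and backward) orbit is dense in $K$; see e.g. \cite{Man87}. In particular $K$ is transitive. Next I claim that $K\cap\sing(G)=\emp$. Since $\mu$ is an SRB measure it has a positive Lyapunov exponent at $\mu$-almost every point by Theorem~\ref{thm:appv-attracting}(2), so $\mu$ is not a Dirac mass at an equilibrium and $K$ is not a single point; hence the dense-orbit point $y$ satisfies $y\neq\sigma$ for any equilibrium $\sigma$. Were some $\sigma\in K\cap\sing(G)$, then density of the orbit of $y$ in $K$, together with the fact that a smooth flow cannot reach an equilibrium in finite time, would force $\sigma\in\omega(y)$ with $y\in K\setminus\{\sigma\}$. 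Proposition~\ref{prop:generaLorenzlike} would then make $\sigma$ a (generalized) Lorenz-like singularity, contradicting the hypothesis that $K$ contains no Lorenz-like singularity. Thus $K$ has no equilibria.

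Being a compact invariant subset of the sectional-hyperbolic set $\Lambda$, the set $K$ is itself sectional-hyperbolic, and as it contains no equilibria the Hyperbolic Lemma~\ref{le:hyplemma} shows that $K$ is a (uniformly) hyperbolic set, with splitting $T_KM=E^s\oplus\RR\{G\}\oplus E^u$. It then remains to promote the transitive hyperbolic set $K$ to a hyperbolic attractor. Here I would use the SRB property: by Theorem~\ref{thm:appv-attracting}(2) the measure $\mu$ admits an absolutely continuous disintegration along the unstable manifolds $W^u(x)$, $x\in K$. Since $K=\supp\mu$ is closed and the conditional measures are absolutely continuous (hence have full support on the corresponding leaves), the local unstable manifold through $\mu$-a.e. point lies in $K$; by $\phi_t$-invariance and $\closure{\{\phi_t y:t\ge0\}}=K$ this propagates to $W^u(x)\subset K$ for every $x\in K$. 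Because $\Lambda=\Lambda_G(U)$ is an attracting set, these unstable manifolds remain in $\Lambda\subset U$, so $K$ is a compact transitive hyperbolic set saturated by its own unstable manifolds; by the standard characterization of hyperbolic attractors (an unstable-saturated transitive hyperbolic set admits, through its local product structure, a trapping neighborhood) the set $K$ is a hyperbolic attractor.

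The hard part will be this final step: extracting from the purely measure-theoretic statement of absolute continuity along unstable leaves the geometric conclusion that entire unstable manifolds sit inside $\supp\mu$, and then invoking the local product structure to build a genuine trapping region certifying the attractor property. By contrast, transitivity and the absence of equilibria are essentially forced by Proposition~\ref{prop:generaLorenzlike}, and the passage to uniform hyperbolicity is an immediate application of the Hyperbolic Lemma.
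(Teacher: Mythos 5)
Your proposal is correct and follows essentially the same route as the paper: rule out equilibria in $\supp\mu$ via transitivity and Proposition~\ref{prop:generaLorenzlike}, apply the Hyperbolic Lemma~\ref{le:hyplemma}, use the absolutely continuous (strictly positive density) disintegration of the SRB measure together with closedness of the support and continuity of the unstable lamination to show $\supp\mu$ is saturated by unstable manifolds, and conclude the attractor property from the resulting stable-saturated neighborhood (the paper cites Bowen--Ruelle where you invoke local product structure, but the mechanism is the same).
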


\begin{proof}
  Let $\mu$ be a physical ergodic probability measure supported in
  $\Lambda$, and let us assume that $A=\supp\mu$ contains no
  Lorenz-like singularities. Since $A$ is transitive, then by
  Remark~\ref{rmk:notLorenzlike} there can be no other (hyperbolic)
  singularities in $A$. Then the compact invariant subset $A$ is
  uniformly hyperbolic, by the Hyperbolic Lemma~\ref{le:hyplemma}.

  The SRB property can be geometrically described as follows; see e.g
  \cite{PS82}.  For $\mu$-a.e. $x$ there exists a neighborhood $V_x$
  where $\mu$ admits a disintegration
  $\{\mu_\gamma\}_{\gamma\in\cF(V_x)}$ over the strong-unstable leaves
  $\cF(V_x)$ that cross $V_x$ which is absolutely continuous with
  respect to the induced volume measure on the leafs. More precisely,
  we have
  \begin{itemize}
  \item $\mu(\vfi)=\int\mu_{\gamma}(\vfi)\,d\hat\mu(\gamma)$ for each
    bounded measurable observable $\vfi:M\to\RR$, where $\hat\mu$
    is the quotient measure on the leaf space induced by $\mu$ and, in
    addition
  \item $\mu_{\gamma} = \psi_\gamma \Leb_{\gamma}$ for
    $\hat\mu$-a.e. $\gamma$, where $\Leb_{\gamma}$ denotes the measure
    induced on the leaf $\gamma\in\cF(V_x)$ (which is a submanifold of
    $M$) by the Lebesgue volume measure; and
    $\psi_\gamma:\gamma\to[0,+\infty)$ is a strictly positive
    measurable and $\Leb_\gamma$-integrable density function.
  \end{itemize}
  In particular, $\Leb_\gamma$-a.e. point of $\hat\mu$-a.e. leaf in
  $\cF(V_x)$ belongs to $A$. Indeed, given any full measure subset $A_1$
  of $A$, we have that $\mu_\gamma(A_1)=1$ for $\hat\mu$-a.e. $\gamma$
  and hence $\Leb_\gamma(A_1)=1$ also. Since a full Lebesgue measure
  subset is dense and $A$ is closed, we see that the unstable
  leaf $\gamma$ is contained in $A$ for a $\mu$-positive
  measure subset of $V_x$.

  In addition, unstable leaves of inner radius $\epsilon>0$ are
  defined on all points of $A$ by uniform hyperbolicity and the map
  $A\ni x\mapsto W^{uu}_\epsilon(x)$ is continuous in the $C^1$
  topology of disk embeddings; see e.g. \cite[Chapter
  6]{fisherHasselblatt12}.

  Then, since the previous property holds for a full $\mu$-measure subset
  of points $x$, which is dense in $A$, we see that $A$ contains the
  unstable leaves through a dense subset of its points. By continuity
  of the unstable foliation in a hyperbolic set, we conclude that $A$
  contains the unstable manifold through each of its points. This
  ensures that $A$ is a hyperbolic attractor:
  \begin{itemize}
  \item the support of the ergodic measure $\mu$ is topologically
    transitive;
  \item the union the center stable leaves $W^{s}(y)$, through each
    point $y$ of the local strong-unstable leaf $W^{uu}_\epsilon(x)$,
    is a open subset contained in the topological basin of $A$;
  \item the flow is at least $C^2$, hence we can apply
    \cite[Proposition 5.4 \& Theorem 5.6]{BR75}.
  \end{itemize}
  The proof is complete.
\end{proof}


\subsection{Non-existence of hyperbolic attracting sets near
  singular-hyperbolic attractors}
\label{sec:coroll-non-existence}

Here we state and prove a small extension of the statement of
Theorem~\ref{thm:morales}, showing that not only \emph{attractors} but
also \emph{attracting sets} must be singular near a
singular-hyperbolic attractor.

\begin{corollary}\label{cor:Morales}
  Let $\Lambda$ be a singular-hyperbolic attractor of a $3$-flow of a
  $C^r$ vector field $X$, $r \ge 1$. Then, there is a neighborhood $U$
  of $\Lambda$ such that \emph{every attracting set} in $U$ of a $C^r$
  vector field $C^r$ close to $X$ is singular.
\end{corollary}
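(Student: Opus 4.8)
The plan is to reduce the statement to Morales' Theorem~\ref{thm:morales} by showing that a non-singular attracting set would have to contain a non-singular \emph{attractor}, which is forbidden. First I would fix the neighborhood $U$ provided by Theorem~\ref{thm:morales} and, shrinking it if necessary, use the well-known robustness of the singular-hyperbolic structure to arrange that the maximal invariant set $\Lambda_Y(U)=\bigcap_{t>0}\closure{\phi_t(U)}$ is singular-hyperbolic for the flow of every $C^r$ vector field $Y$ sufficiently $C^r$-close to $X$. This is legitimate because the dominated splitting, the volume expansion along $E^{cu}$, and the hyperbolicity of the equilibria are all open conditions on a trapping region.

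Next, arguing by contradiction, I would suppose that some $Y$ that is $C^r$-close to $X$ admits an attracting set $\Lambda'\subset U$ (with its own trapping region $U'\subset U$) satisfying $\Lambda'\cap\sing(Y)=\emptyset$. Since $\Lambda'$ is invariant and contained in $U$, it is a compact invariant subset of the singular-hyperbolic set $\Lambda_Y(U)$, and the splitting together with the contraction, domination and volume-expansion estimates restrict to $\Lambda'$ (here $d_{cu}=2$, so singular- and sectional-hyperbolicity coincide). As $\Lambda'$ contains no equilibria, the Hyperbolic Lemma~\ref{le:hyplemma} yields that $\Lambda'$ is a \emph{uniformly hyperbolic} attracting set for $Y$.

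Then I would invoke the Smale spectral decomposition of the uniformly hyperbolic attracting set $\Lambda'$ into finitely many basic sets, together with the partial order induced by the flow connections between them. Since $\Lambda'$ is attracting, a terminal (most attracting) basic set $A$ in this order is a transitive hyperbolic attracting set, that is, a hyperbolic attractor; moreover $A\subset\Lambda'\subset U$ and $A$ contains no singularity. This contradicts Theorem~\ref{thm:morales}, which forces every attractor of $Y$ inside $U$ to be singular. Hence $\Lambda'$ must contain a singularity, proving the corollary.

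The hard part will be the passage from ``attracting set'' to ``attractor'': without hyperbolicity a non-singular attracting set need not contain a genuine transitive attractor, so the crucial inputs are the robustness of singular-hyperbolicity on $U$ (enabling the Hyperbolic Lemma on $\Lambda'$) followed by the spectral decomposition that extracts $A$. The point that must be argued with care is that this terminal basic set $A$ is indeed an attractor in the sense required by Theorem~\ref{thm:morales}, namely that $A$ is transitive, lies in $U$, and has a basin containing an open neighborhood of $A$; this is where the hyperbolic structure of $\Lambda'$ is used in an essential way.
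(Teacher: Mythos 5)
Your proposal is correct and follows essentially the same route as the paper: argue by contradiction, use the (robustly persistent) singular-hyperbolic structure on $U$ together with the Hyperbolic Lemma~\ref{le:hyplemma} to make the non-singular attracting set uniformly hyperbolic, apply the Spectral Decomposition to extract a hyperbolic attractor inside $U$, and contradict Theorem~\ref{thm:morales}. If anything, your explicit selection of a \emph{terminal} basic set in the flow-connection order is a cleaner way to produce the attractor than the paper's assertion that every basic piece of the decomposition is one.
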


\begin{proof}
  Let $\Lambda=\Lambda_X(U)$ be a singular-hyperbolic attractor as in
  the statement of the corollary (and Theorem~\ref{thm:morales}) and
  $\V$ be a $C^r$ neighborhood of $X$ satisfying the conclusion of
  Theorem~\ref{thm:morales}: for every $G\in\V$ every attractor
  $\Gamma\subset U$ with respect to $G$ contains some (Lorenz-like)
  singularity -- since these are the only equilibria allowed in
  singular-hyperbolic attractors; see e.g. \cite{MPP04,AraPac2010}.

  Arguing by contradiction, let $A=\Lambda_G(W)$ be an attracting set:
  the maximal invariant subset within $W\subset U$; and let us assume
  that $A$ is non-singular.

  Hence, by the Hyperbolic Lemma~\ref{le:hyplemma}, $A$ becomes a
  hyperbolic locally maximal attracting set for the flow generated by
  $G$. Thus, from the standard uniform hyperbolic theory for flows
  \cite[Corollaries 5.3.21 \& 5.3.22]{fisherHasselblatt12} the set $A$
  satisfies shadowing and periodic orbits in $W$ are dense in $A$.
  Therefore, by ``Spectral Decomposition'' \cite[Proposition
  5.3.33]{fisherHasselblatt12}, $A$ is the disjoint union of finitely
  many basic sets: $A=\sum_i A_i$ where each $A_i$ is compact,
  hyperbolic, transitive and locally maximal.

  In particular, we can find pairwise disjoint open neighborhoods such
  that $A_i\subset W_i\subset W$ and
  $A_i=\bigcap_{t\in\RR}\closure{\phi_t^G(W_i)}$. But if
  $x\in\Lambda_G(W_i)$, then $x\in W_i\subset W$ and
  $\phi_{-t}^G(x)\in W_i\subset W$ for all $t\ge0$. Hence $x\in
  \Lambda_G(W)=A$ and $x\in W_i$; thus $x\cap A\cap W_i=A_i$. This
  shows that $\Lambda_G(W_i)\subset A_i$. Since clearly
  $\Lambda_G(W_i)\supset A_i$ by invariance of $A_i$, we see that
  $A_i$ is a hyperbolic attractor.

  However, such attractors cannot exist by
  Theorem~\ref{thm:morales}. This contradiction shows that every
  attracting set within $U$ must contain some (Lorenz-like)
  singularity for all $G\in\V$, as we wanted to prove.
\end{proof}


\subsection{Number of singular ergodic physical measures}
\label{sec:number-singul-physic}

Here we prove Theorem~\ref{mthm:numberSRB} and
Corollary~\ref{mcor:numberSRB}. 

We present some auxiliary results: the existence of an invariant
stable foliation covering the trapping region of any partially
hyperbolic attracting set; the denseness of the stable manifold of
singularities in the support of an ergodic physical measure; and
finally use these results, considering the ergodic physical measures
whose suport contains some equilibrium, to complete the proofs of the
main Theorem~\ref{mthm:numberSRB} and Corollary~\ref{mcor:numberSRB}.

\subsubsection{Existence of stable foliation covering $U$}
\label{sec:existence-stable-fol}

We recall the following useful property of partially hyperbolic
attracting sets.

\begin{proposition}{\cite[Proposition~3.2]{ArMel17}} \label{prop:Es}
  Let $\Lambda$ be a partially hyperbolic attracting set.  The stable
  bundle $E^s$ over $\Lambda$ extends to a continuous uniformly
  contracting $D\phi_t$-invariant bundle $E^s$ over an open neighborhood
  of $\Lambda$.
\end{proposition}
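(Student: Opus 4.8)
The plan is to extend $E^s$ by a backward cone-contraction argument, exploiting that forward orbits of points in the trapping region $U$ remain trapped. First I would extend the dominated splitting $T_\Lambda M=E^s\oplus E^{cu}$ to a continuous (but non-invariant) splitting $\widetilde E^s\oplus\widetilde E^{cu}$ over an open neighborhood $V\supset\Lambda$, agreeing with the invariant one on $\Lambda$; this is routine using continuity of the bundles on the compact set $\Lambda$ together with a partition of unity. Around $\widetilde E^s$ I introduce, for small $a>0$, the stable cone field $C^s_a(y)=\{v_s+v_c: v_s\in\widetilde E^s_y,\ v_c\in\widetilde E^{cu}_y,\ \|v_c\|\le a\|v_s\|\}$ on $V$.

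The uniform contraction of $E^s$ and the domination hold on $\Lambda$ with strict constants; rewriting them in cone form for the time-$t_1$ map (fixed $t_1>0$) and using continuity of $D\phi_{t_1}$ and of the splitting, the same inequalities persist, with slightly weaker constants, on a smaller neighborhood $V'\subset V$. Since under the forward flow $E^s$ is dominated by $E^{cu}$, under the \emph{backward} flow $E^s$ becomes the dominating direction; consequently $D\phi_{-t_1}$ maps $C^s_a(\phi_{t_1}(y))$ strictly inside $C^s_a(y)$ and contracts its aperture at a uniform exponential rate, whenever $y,\phi_{t_1}(y)\in V'$. Choosing $t_0$ so large that $W:=\phi_{t_0}(U)\subset V'$ — which is possible because $\phi_t(U)\to\Lambda$ in the Hausdorff topology — yields a trapping neighborhood $W$ of $\Lambda$ with $\phi_t(x)\in W\subset V'$ for every $x\in W$ and $t\ge0$.

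For $x\in W$ I then define $E^s_x:=\bigcap_{t\ge0}D\phi_{-t}\big(C^s_a(\phi_t(x))\big)$, which is meaningful precisely because the forward orbit of $x$ stays in $V'$. The nestedness coming from the backward cone invariance and the uniform exponential contraction of the cone aperture force this intersection to be a single $d_s$-dimensional subspace lying in $C^s_a(x)$, coinciding with $E^s_x$ on $\Lambda$. Forward invariance, $D\phi_t(E^s_x)=E^s_{\phi_t(x)}$, follows by reindexing the intersection and noting that the finitely many extra forward-cone terms are supersets, while the uniform contraction $\|D\phi_t\mid E^s_x\|\le C'\lambda'^{\,t}$ is inherited from the persistent cone-form inequality on $V'$. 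The main obstacle is the \emph{continuity} of $x\mapsto E^s_x$: a decreasing intersection of continuous cones only yields upper semicontinuity a priori, so I must upgrade this to genuine continuity by showing that the nested subspaces $D\phi_{-t}\big(C^s_a(\phi_t(x))\big)$ converge to $E^s_x$ uniformly in $x$. This is where the uniformity of the exponential aperture-contraction over the compact set $\closure{W}$ is essential, and it is the delicate step of the argument.
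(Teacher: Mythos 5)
The paper does not prove this statement at all: it is imported verbatim as \cite[Proposition~3.2]{ArMel17}, so there is no in-paper argument to compare against. Your cone-field construction --- extending the splitting non-invariantly, using domination to get strict backward invariance and exponential aperture contraction of the stable cones on a forward-invariant neighborhood $W=\phi_{t_0}(U)$, and defining $E^s_x$ as the nested intersection of pulled-back cones with continuity from the uniformity of the contraction --- is the standard proof and is essentially the argument given in the cited reference; it is correct.
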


Let $\cD^k$ denote the $k$-dimensional open unit disk and let
$\mathrm{Emb}^r(\cD^k,M)$ denote the set of $C^r$ embeddings
$\gamma:\cD^k\to M$ endowed with the $C^r$ distance. In the present
setting, we have $d_s=\dim E^s=1$ in what follows.

\begin{proposition}{\cite[Theorem~4.2 and Lemma~4.8]{ArMel17}}\label{prop:Ws}
  Let $\Lambda$ be a partially hyperbolic attracting set.  There
  exists a positively invariant neighborhood $U_0$ of $\Lambda$, and
  constants $C>0$, $\lambda\in(0,1)$, such that the following are
  true:
  \begin{enumerate}
  \item For every point $x \in U_0$ there is a $C^r$ embedded
    $d_s$-dimensional disk $W^s_x\subset M$, with $x\in W^s_x$, such
    that
  \begin{enumerate}
  \item $T_xW^s_x=E^s_x$.
  \item $\phi_t(W^s_x)\subset W^s_{\phi_tx}$ for all $t\ge0$.
  \item $d(\phi_tx,\phi_ty)\le C\lambda^t d(x,y)$ for all $y\in W^s_x$,
    $t\ge0$.
  \end{enumerate}
\item The disks $W^s_x$ depend continuously on $x$ in the $C^0$
  topology: there is a continuous map
  $\gamma:U_0\to {\rm Emb}^0(\cD^{d_s},M)$ such that $\gamma(x)(0)=x$
  and $\gamma(x)(\cD^{d_s})=W^s_x$.  Moreover, there exists $L>0$ such
  that the Lipschitz constant of $\gamma(x)$ is bounded above
  $\Lip\gamma(x)\le L$ for all $x\in U_0$.

\item The family of disks $\{W^s_x:x\in U_0\}$ defines a topological
  foliation of $U_0$.
\end{enumerate}
\end{proposition}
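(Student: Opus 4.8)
The statement is a version of the stable-manifold theorem adapted to the partially hyperbolic (merely dominated) setting, and the plan is to produce the disks $W^s_x$ as the fixed family of a graph transform and then bootstrap their regularity and continuous dependence. First I would invoke Proposition~\ref{prop:Es} to extend $\es$ to a continuous, uniformly contracting, $D\phi_t$-invariant bundle over an open neighborhood $V$ of $\La$; since $\La$ is an attracting set it comes with a trapping region, so I can choose a positively invariant neighborhood $U_0\subset V$ with $\closure{\phi_t(U_0)}\subset U_0$ for $t>0$. On $U_0$ I fix a continuous complementary bundle $F$ (for instance the normal complement of $\es$), so that along the forward orbit of every $x\in U_0$ the tangent space splits as $\es\oplus F$, with $\es$ uniformly contracted and the splitting dominated by the constants $C,\la$ inherited from partial hyperbolicity.

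Next I would fix $T>0$ large enough that $C\la^{T}<1$ and set $f=\phi_T$. Over each $x\in U_0$ I represent a candidate stable disk as the graph of a Lipschitz map $g_x\colon\es_x(r)\to F_x$ from the $r$-disk in $\es_x$ into the complementary fibre, with $g_x(0)=0$ and uniformly bounded Lipschitz constant; the invariance requirement $f(W^s_x)\subset W^s_{fx}$ then becomes a functional equation for the family $(g_x)_x$. The graph transform $\Gamma$ sending $(g_x)_x$ to the family whose graphs are the $f$-images is well defined on the space of such Lipschitz families, and the key estimate is that $\Gamma$ is a contraction in the $C^0$ (sup) norm: this uses the strong contraction $\|D\phi_T\mid\es_x\|\le C\la^{T}<1$ along the leaves together with the domination inequality $\|D\phi_t\mid\es_x\|\cdot\|D\phi_{-t}\mid\ecu_{\phi_t x}\|\le C\la^{t}$, which forces $\Gamma$ to preserve an invariant cone field around $\es$ and to shrink the admissible Lipschitz constants. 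The unique fixed family then yields, for each $x$, an embedded disk $W^s_x$ tangent to $\es_x$ at $x$ satisfying the invariance (b); the contraction rate (c) follows by iterating the bound along the forward orbit and interpolating between integer multiples of $T$.

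To upgrade the disks to class $C^r$ (item (1)) I would apply the fibre-contraction / $C^r$-section theorem of Hirsch--Pugh--Shub to the lift of $\Gamma$ acting on jets up to order $r$, the relevant normal-contraction bounds again coming from the domination hypothesis; note that only the individual leaves are $C^r$, while the foliation itself need only be continuous, which is exactly the content of item (2). Continuous dependence of $W^s_x$ on $x$ in the $C^0$ topology, and the uniform Lipschitz bound $L$, follow from the uniformity of the contraction of $\Gamma$: fixed points of a continuously varying, uniformly contracting family depend continuously on the parameter, and the invariant cone field provides the bound on $\Lip\gamma(x)$. For item (3), uniqueness of the fixed-point disk through each point shows that two leaves either coincide or are disjoint, and combined with continuous dependence this makes $\{W^s_x:x\in U_0\}$ a topological foliation with $C^r$ leaves. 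I expect the main obstacle to be precisely the contraction estimate for $\Gamma$ in the partially hyperbolic regime: unlike the uniformly hyperbolic case there is no expansion available in the complementary direction $\ecu$, so the whole argument must run on the domination inequality alone, and care is needed both in choosing $U_0$ positively invariant---so that the functional equation for $(g_x)_x$ closes up along entire forward orbits---and in verifying that the cone condition is genuinely forward-invariant under $f$.
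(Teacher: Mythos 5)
The paper does not actually prove this proposition: it is quoted verbatim from Araujo--Melbourne \cite[Theorem~4.2 and Lemma~4.8]{ArMel17}, whose proof is essentially the argument you sketch (extend $E^s$ over a trapping neighborhood via Proposition~\ref{prop:Es}, run a graph transform for a large-time map inside a backward-invariant cone field around $E^s$ using only the domination inequality, and invoke the Hirsch--Pugh--Shub $C^r$ section theorem for leafwise regularity, continuous dependence and the uniform Lipschitz bound). Your sketch is therefore essentially correct and follows the same route as the cited source; the only two points that deserve more care than you give them are the transfer of the contraction and domination constants from $\Lambda$ to all of $U_0$ (which requires continuity of the extended bundle plus shrinking the neighborhood, since the splitting is only assumed dominated over $\Lambda$), and the coherence of the leaves in item~(3), which does not follow from bare uniqueness of the fixed family $(g_x)_x$ but from the dynamical characterization of $W^s_x$ as the set of points whose forward orbit approaches that of $x$ at an exponential rate beating the domination exponent.
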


We can naturally assume that $U=U_0$ in what follows (using the flow
invariance of the foliation). 


\subsubsection{Density of stable leaves of equilibria contained in the
  support of an ergodic physical measure}
\label{sec:denstablesing}

We state and prove the following useful property of
singular-hyperbolic attracting sets of smooth $3$-flows.

\begin{proposition}\label{pr:densestablesing}
  Let $\Lambda$ be a connected singular-hyperbolic attracting set for
  a $C^2$-smooth $3$-vector field $G$. Let $\mu$ be an ergodic
  physical measure supported on $\Lambda$ and \emph{whose support
    contains a singularity}. Then the stable manifold of the
  singularity transversely intersects the unstable manifold of every
  periodic orbit in the support of $\mu$.
\end{proposition}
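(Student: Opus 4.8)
The plan is to exploit the interaction between the one-dimensional stable foliation covering the trapping region and the dynamics of the singular-hyperbolic attracting set. Let $\sigma\in\supp\mu$ be the singularity, which by Proposition~\ref{prop:generaLorenzlike} is Lorenz-like, so its index is $d_s+1=2$ in the $3$-dimensional setting; thus $\dim W^s_\sigma=2$ and $\dim W^u_\sigma=1$. Let $\cO_G(p)$ be a periodic orbit in $\supp\mu$; since $\Lambda$ is singular-hyperbolic, $\cO_G(p)$ is a hyperbolic saddle with $\dim W^u(\cO_G(p))=2$ and $\dim W^s(\cO_G(p))=2$. In $\RR^3$, two surfaces $W^s_\sigma$ and $W^u(\cO_G(p))$ generically intersect along curves, so the transversality is a dimension-counting statement; the real content is to produce a point of intersection and verify that the tangent spaces span $T_zM$.

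First I would use the fact that $\supp\mu$ is the support of an ergodic measure for the invertible flow, hence it admits a point whose forward and backward orbit are both dense in $\supp\mu$ (as invoked earlier, see \cite{Man87}). Because both $\sigma$ and $\cO_G(p)$ lie in this transitive set, the unstable manifold $W^u(\cO_G(p))$ must accumulate on $\sigma$, and by the Inclination (lambda) Lemma a fundamental domain of $W^u(\cO_G(p))$ sweeps across a neighborhood of $W^u_\sigma$ inside the attracting set. The key geometric input is Proposition~\ref{prop:Ws}: the stable foliation $\{W^s_x\}$ covers the trapping region $U$, is $\phi_t$-invariant, and near $\sigma$ its leaves fill out $W^s_\sigma$ (by Remark~\ref{rmk:notLorenzlike}(2), the leaf through $w\in\gamma^s_\sigma\setminus\{\sigma\}$ is tangent to $E^{cs}_w=E^s_w\oplus\RR\{G(w)\}$, which is exactly $T_wW^s_\sigma$). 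So as $W^u(\cO_G(p))$ accumulates near $\sigma$, it must cross the leaves of the stable foliation, and in particular cross $W^s_\sigma$ itself.

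The plan for transversality is then to analyze the crossing directly. At an intersection point $z\in W^s_\sigma\cap W^u(\cO_G(p))$ we have $T_zW^s_\sigma=E^{cs}_z=E^s_z\oplus\RR\{G(z)\}$ and $T_zW^u(\cO_G(p))=\RR\{G(z)\}\oplus E^u_z$ (both are flow-invariant surfaces containing the flow direction, so each tangent plane contains $G(z)$, and the remaining directions are $E^s_z$ and $E^u_z$ respectively by the Hyperbolic Lemma~\ref{le:hyplemma} applied along the regular orbit). Since the splitting $T_zM=E^s_z\oplus\RR\{G(z)\}\oplus E^u_z$ is a direct sum, the two tangent planes together span $T_zM$, giving transversality. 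The main obstacle is therefore not the final linear-algebra step but guaranteeing that the accumulation of $W^u(\cO_G(p))$ actually meets the \emph{strong} part of $W^s_\sigma$ transversely rather than merely limiting onto the equilibrium or sliding along the flow direction; this is where Remark~\ref{rmk:notLorenzlike}(3), asserting $W^{ss}_\sigma\cap\Lambda=\{\sigma\}$, is crucial, since it forces the accumulation of the attracting set on $\sigma$ to occur transversely to $W^{ss}_\sigma$ and hence to produce genuine crossings of $W^s_\sigma$ away from the strong-stable leaf. I would make this precise using the invariant stable foliation together with the $\lambda$-Lemma to control the position and tangent directions of the fundamental domains of $W^u(\cO_G(p))$ as they return near $\sigma$.
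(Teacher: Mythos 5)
Your dimension count and the final linear-algebra verification of transversality are fine, and your instinct that the real difficulty is producing an actual intersection point is correct. But the proposal leaves exactly that difficulty unresolved, and it is the substantive content of the proposition. There are two concrete gaps. First, the accumulation claim: ``both $\sigma$ and $\cO_G(p)$ lie in this transitive set, hence $W^u(\cO_G(p))$ accumulates on $\sigma$'' does not follow from transitivity of $\supp\mu$ alone; a transitive set need not be contained in the closure of the unstable manifold of one of its periodic orbits. The paper obtains this from Pesin/Katok theory ($\supp\mu$ lies in the homoclinic class of a hyperbolic periodic orbit, hence in $\closure{W^u(\cO(p))}$, with all periodic orbits in $\supp\mu$ homoclinically related), together with the separate Lemma~\ref{le:unstablesing}, which shows that the forward saturation of an arc of $W^{uu}(p_0)$ must close up on a singularity --- otherwise it would be a hyperbolic attracting set coinciding with $\supp\mu$, contradicting the presence of a singularity in the support. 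None of that structure appears in your argument.

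Second, and more seriously: accumulation of $W^u(\cO(p))$ on $\sigma$ does not imply that $W^u(\cO(p))$ meets $W^s(\sigma)$. The stable foliation of Proposition~\ref{prop:Ws} covers the whole trapping region, so the observation that pieces of $W^u(\cO(p))$ ``cross stable leaves'' near $\sigma$ carries no information; what is needed is that one of those pieces meets a leaf lying inside $W^s(\sigma)$, and a local disk of $W^u(\cO(p))$ can pass arbitrarily close to $\sigma$ while remaining entirely on one side of $W^s_{loc}(\sigma)$, never touching it. Converting accumulation on a singularity into a genuine transversal intersection with its stable manifold is precisely the content of the sectional-Anosov connecting lemma of Bautista--Morales \cite[Corollary~1.4]{BM2010}, which is what the paper invokes at this step; it is a theorem in its own right, not something the $\lambda$-Lemma plus the invariant stable foliation delivers in a few lines. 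Your closing sentence acknowledges this obstacle but does not overcome it, so the argument is incomplete exactly where the proposition has its content.
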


\begin{proof}
  Let $\mu$ be an ergodic physical measure supported in $\Lambda$,
  in the setting of the statement of the proposition.

  It is well-known from the Non-Uniform Hyperbolic Theory (Pesin's
  Theory) that the support of a non-atomic
  hyperbolic ergodic probability measure $\mu$ is contained in a
  homoclinic class of a hyperbolic periodic orbit $\cO(p)$; see
  e.g. \cite[Appendix]{KH95} or \cite[Theorem 15.4.3]{BarPes2007}.

  In particular, the set of periodic points $\per(G)\cap\supp\mu$ is
  dense in $\supp\mu$ and all of them are \emph{homoclinically
    related} -- this conclusion also follows from \cite[Theorem
  1.5(8)]{Sataev2010}. This means, more precisely, that given any
  $p,q\in\per(G)\cap\supp\mu$ there exists a transversal intersection
  between the stable and unstable manifolds of $p,q$:
  \begin{align*}
    W^s(p)\pitchfork W^u(q)\neq\emptyset\neq W^u(p)\pitchfork W^s(q).
  \end{align*}

  \begin{lemma}
    \label{le:unstablesing}
    Fix $p_0\in\per(G)\cap\supp\mu$ and let $J=[a,b]$ be an arc on a
    connected component of $W^{uu}_G(p_0)\setminus\{p_0\}$ with
    $a\neq b$.  Then $H=\closure{\cup_{t>0} \phi_t(J)}$ contains a
    singularity of $\Lambda$.
  \end{lemma}

\begin{proof}
  Since $\mu$ is a SRB measure, for $\mu$-a.e. $x$ we have
  $W^u_x\subset\supp\mu$ and
  $W^u_x\pitchfork W^s(\cO(p))\neq\emptyset$.  Thus by the Inclination
  Lemma (see \cite{PM82}) we have
  $W^u(p)\subset\ov{W^u(x)}\subset\supp\mu$.

  Because every periodic point $p_0\in\supp\mu$ is homoclinically
  related to $p$, then we also have
  $W^{uu}(p_0)\subset W^u(p_0)\subset\supp\mu$.

  Note that $H\subset\closure{W^u_0(p_0)}\subset\supp\mu$ and $H$ is a
  compact invariant set by construction, where $W^u_0(p_0)$ is the
  connected component of $W^u(p_0)\setminus\cO(p_0)$ containing
  $J$. In addition, $H$ is clearly connected, since $H$ is also the
  closure of the orbit of the connected set $J$ under a continuous
  flow.

  If $H$ has no singularities, then $H$ is a compact connected
  hyperbolic set of saddle-type. Moreover, $H$ contains the
  strong-unstable manifolds through any of its points, since every
  point in $H$ is accumulated by forward iterates of the arc $J$.

  This means that $H$ is a hyperbolic attracting set and so
  $H=\supp\mu$ by the existence of a dense regular orbit in
  $\supp\mu$.  Consequently, $H$ contains all singularities of
  $\supp\mu$. This contradiction proves that $H$ must contain a
  singularity of $\supp\mu$.
\end{proof}

Fix $p_0$ and $\sigma\in\sing(G)\cap H\cap\supp\mu$ as in the
statement of Lemma~\ref{le:unstablesing}.  We have shown that
$\closure{W^u(p_0)}\cap\sing(X)\neq\emptyset$. Moreover, the flow of $G$ is
inwardly transverse to the boundary of the manifold $\ov{U}$ and has
$\Lambda$ as its maximal invariant subset, which is
singular-hyperbolic. Hence, $G$ in $\ov{U}$ is a sectional-Anosov flow
and we are in the setting of the next result from Bautista and Morales
\cite{BM2010}.

\begin{theorem}{\cite[Corollary 1.4]{BM2010}}
  \label{thm:cor1.4BM}
  If $\cO$ is a periodic orbit of a sectional-Anosov flow $G$ on a
  compact manifold satisfying
  $\closure{W^u(p_0)}\cap \sing(G)\neq\emptyset$, then there exists
  $\sigma\in\sing(G)$ such that
  $W^u(\cO)\pitchfork W^s(\sigma)\neq\emptyset$.
\end{theorem}

Now we take $\cO=\cO_G(p_0)$ and since $G$ is a sectional-Anosov flow
in the trapping region of $\Lambda$, we apply
Theorem~\ref{thm:cor1.4BM} to obtain the existence of
$\sigma\in \sing(G)$ satisfying
$W^u(\cO)\cap W^s(\sigma)\neq\emptyset$. By definition of
singular-hyperbolicity, this implies that
$W^u(p_0)\pitchfork W^s(\sigma)\neq\emptyset$.

This is enough to conclude the proof of Proposition
\ref{pr:densestablesing}. Indeed, since all periodic orbits in
$\supp\mu$ are homoclinically related, it is enough to obtain
$W^u(p_0)\pitchfork W^s(\sigma)\neq\emptyset$ for one periodic point
$p_0\in\supp\mu$.
\end{proof}


\subsubsection{Proof of the main theorem and corollary}
\label{sec:atmost2}

Here we show that a Lorenz-like equilibrium can be accumulated by at
most two supports of ergodic physical measures inside a
singular-hyperbolic attracting set.

\begin{lemma}\label{le:221}
  Given a Lorenz-like singularity $\sigma_0$ in a singular-hyperbolic
  attracting set $\Lambda=\Lambda_G(U)$ in a trapping region $U$ for a
  $3$-dimensional $C^2$ vector field $G$, then there are at most two
  distinct ergodic physical measures whose support contains $\sigma_0$.
\end{lemma}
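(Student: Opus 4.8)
The plan is to reduce the statement to a counting problem for a one-dimensional piecewise expanding quotient map attached to $\sigma_0$, and then to show that the discontinuity coming from $\sigma_0$ can be adjacent to the support of at most two ergodic absolutely continuous invariant measures. First I record the local picture. Since $\sigma_0$ is Lorenz-like in a $3$-flow we have $\dim E^s_{\sigma_0}=2$, so the unstable manifold $W^u(\sigma_0)$ is a one-dimensional curve whose complement $W^u(\sigma_0)\setminus\{\sigma_0\}$ has exactly two separatrices $\ell^+,\ell^-$, while $W^s(\sigma_0)$ is a codimension-one surface. By Proposition~\ref{prop:Ws} the trapping region carries a continuous one-dimensional stable foliation; its leaf through $\sigma_0$ is tangent to $E^s_{\sigma_0}$ and hence coincides locally with $W^{ss}_{loc}(\sigma_0)$, so by item (3) of Remark~\ref{rmk:notLorenzlike} this leaf meets $\Lambda$ only at $\sigma_0$. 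This isolation is what forces $\sigma_0$ to behave as an isolated discontinuity of the quotient dynamics, approached by $\Lambda$ only from its two sides and never ``through'' it.

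Next I would fix a small cross-section $\Sigma$ transverse to the flow through which orbits of $\Lambda$ enter a neighborhood of $\sigma_0$. The curve $\Gamma=W^s(\sigma_0)\cap\Sigma$ is a leaf of the stable foliation restricted to $\Sigma$ and separates $\Sigma$ into two half-sections $\Sigma^+,\Sigma^-$, whose orbits leave the neighborhood of $\sigma_0$ shadowing $\ell^+$ and $\ell^-$ respectively. Quotienting $\Sigma$ by the contracting stable leaves produces an interval $I$ and a piecewise expanding map with long branches whose discontinuity associated to $\sigma_0$ is the point $c$, the image of $\Gamma$. Since $\mu$ is an SRB measure (Theorem~\ref{thm:appv-attracting}), its conditionals along unstable leaves are absolutely continuous and transverse to the stable foliation, so the measure $m_\mu$ induced by $\mu$ on $I$ is an ergodic invariant probability absolutely continuous with respect to Lebesgue, and the standard correspondence between ergodic a.c. measures of the quotient and ergodic physical/SRB measures of the flow (as in the geometric Lorenz setting) recovers $\mu$ uniquely from $m_\mu$. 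Moreover, thanks to the isolation of $\Gamma$ noted above, the condition $\sigma_0\in\supp\mu$ translates exactly into $c\in\overline{\supp m_\mu}$.

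Then I would conclude by a one-dimensional argument. The density of an ergodic a.c. invariant measure of a piecewise expanding map with long branches has bounded variation and is strictly positive almost everywhere on its support, which is a finite union of intervals; consequently the supports of two distinct, hence mutually singular, ergodic a.c. measures have disjoint interiors. If $c\in\overline{\supp m_\mu}$, then $\supp m_\mu$ contains a one-sided neighborhood $(c,c+\delta)$ or $(c-\delta,c)$ of $c$, and each of these two one-sided neighborhoods lies in the support of at most one ergodic a.c. measure, for otherwise two distinct ergodic a.c. measures would both be equivalent to Lebesgue on a common subinterval and thus fail to be mutually singular. Hence at most two ergodic a.c. measures have $c$ in the closure of their support, and by the injective correspondence $\mu\mapsto m_\mu$ at most two ergodic physical measures contain $\sigma_0$ in their support.

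I expect the main obstacle to be the rigorous construction of the quotient map and of the correspondence $\mu\leftrightarrow m_\mu$ in this abstract setting, where no global cross-section is assumed: one must build a singular cross-section adapted to $\sigma_0$, verify that the stable foliation descends to a genuine expanding one-dimensional map with $\sigma_0$ as its only local discontinuity, and control the weak-stable direction so that $\supp\mu$ accumulates $\sigma_0$ only from the two sides of $W^s_{loc}(\sigma_0)$. As a preliminary input guaranteeing that both sides of $c$ are genuinely populated I would first establish $W^u(\sigma_0)\subset\supp\mu$, deriving it from Proposition~\ref{pr:densestablesing} together with the Inclination Lemma applied at the transverse intersection $W^u(p_0)\pitchfork W^s(\sigma_0)$; since $W^s(\sigma_0)$ has codimension one, $W^u(p_0)$ crosses it and its forward iterates accumulate on both separatrices, so both one-sided neighborhoods of $c$ occur and the bound $2$ is exactly the worst case.
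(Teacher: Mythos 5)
Your reduction to a one-dimensional quotient map rests on the wrong dichotomy, and this is a genuine gap, not merely the technical obstacle you flag at the end. The factor $2$ in the lemma does not come from the two sides $\Sigma^{\pm}$ of $\Gamma=W^s(\sigma_0)\cap\Sigma$ inside a single ingoing cross-section (equivalently, from the two unstable separatrices $\ell^{\pm}$); it comes from the two sides of $W^{ss}_{loc}(\sigma_0)$ inside $W^{s}_{loc}(\sigma_0)$, i.e.\ from the two weak-stable half-directions along which $\Lambda$ may accumulate $\sigma_0$. A Lorenz-like singularity admits \emph{two} ingoing cross-sections, one on each side of the weak-stable axis (the sections $S$ and $S_-$ of Subsection~\ref{sec:overvi-constr}), and the example constructed there realizes $s=2s_L$ precisely because $\mu_+$ enters every neighborhood of $\sigma_0$ through $S$ while $\mu_-$ does so through $S_-$. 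For that example your equivalence ``$\sigma_0\in\supp\mu\iff c\in\overline{\supp m_\mu}$'' fails: $\supp\mu_-$ contains $\sigma_0$ but does not accumulate $\Gamma$ inside $\Sigma=S$, so $\mu_-$ is invisible to your quotient map. Repairing this by working with both ingoing sections turns your count into $2\times2=4$, not $2$. Worse, your own final paragraph makes the single-section count internally inconsistent: since $W^u(p_0)\pitchfork W^s(\sigma_0)\neq\emptyset$ forces, via the Inclination Lemma, accumulation on \emph{both} separatrices, every measure that your section does see populates \emph{both} one-sided neighborhoods of $c$; combined with ``each one-sided neighborhood supports at most one ergodic absolutely continuous invariant measure'' this yields at most \emph{one} measure per ingoing section -- which is the true statement, but it cannot be extracted from the one-dimensional mutual-singularity argument alone, and it is not where your claimed bound of two comes from.

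The paper proves exactly this ``at most one per weak-stable side'' bound directly in the flow, without any quotient map. By Proposition~\ref{pr:densestablesing} and the Inclination Lemma, a $\mu_i$-generic centre-unstable leaf $W_i$ transversely crosses $W^s_{loc}(\sigma_0)$; since $W^{ss}(\sigma_0)\cap\Lambda=\{\sigma_0\}$ (Remark~\ref{rmk:notLorenzlike}) the forward iterates of $W_i$ must approach $\sigma_0$ along the weak-stable direction, hence from one of the two components of $W^s_{loc}(\sigma_0)\setminus W^{ss}_{loc}(\sigma_0)$. If $W_1$ and $W_2$ approach from the same component, the uniformly sized, absolutely continuous stable foliation of Proposition~\ref{prop:Ws} provides a holonomy matching positive-Lebesgue-measure sets of $\mu_1$- and $\mu_2$-generic points on common stable leaves, forcing $\mu_1=\mu_2$; a third measure would force two of the three onto the same side. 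If you wish to keep a sectional formulation, the set you must quotient over is the pair of ingoing sections (one per weak-stable side), and you still need this holonomy argument to show that each of them carries at most one physical measure accumulating $\Gamma$.
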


Using this we are ready to prove Theorem~\ref{mthm:numberSRB}.

\begin{proof}[ Proof of Theorem~\ref{mthm:numberSRB}]
  Let then $\cP$ be the family of ergodic physical measures supported
  in $\Lambda$ whose support contains some (Lorenz-like) singularity,
  and $\cL$ be the set of Lorenz-like singularities in
  $\Lambda$. Proposition~\ref{prop:generaLorenzlike} ensures that
  there is a relation $R$ between $\cP$ and $\cL$ associating to each
  element of $\cP$ the singularities it contains from $\cL$:
  \begin{align*}
    \mu R \sigma \iff \sigma \in\supp\mu.
  \end{align*}
  Lemma~\ref{le:221} implies that
  $R^{-1}\sigma:=\{\eta\in \cP: \eta R \sigma\}$ satisfies
  $n(\sigma):=\#\big( R^{-1}\sigma\big)\in\{0,1,2\}$. Hence we obtain
$
    \cP = \bigcup_{\sigma\in \cL} R^{-1}\sigma
 $
  and since the union is not necessarily disjoint, we conclude
  \begin{align*}
    s=\#\cP \le \sum_{\sigma\in\cL} n(\sigma) \le 2\#\cL=2\cdot s_L.
  \end{align*}
  This completes the proof, depending only on Lemma~\ref{le:221}.
\end{proof}

The following is straightforward.

\begin{proof}[Proof of Corollary~\ref{mcor:numberSRB}]
  In a trapping neighborhood $U$ of a singular-hyperbolic attractor
  $\Lambda_X(U)$ of a $C^1$-smooth $3$-vector field $X$, there exists a
  $C^1$ neighborhood $\U$ of $X$ so that for all $G\in\U$
  there are no hyperbolic attracting sets with respect to $G$, after
  Corollary~\ref{cor:Morales}.

  Hence, all attracting sets for $G\in\U$ are
  singular-hyperbolic and contain the continuation of some equilibrium
  from the original attractor $\Lambda$, which is necessarily
  Lorenz-like, by Proposition~\ref{prop:generaLorenzlike}.

  Therefore, the number of singularities in $\Lambda_G(U)$ is the same
  as the number of Lorenz-like singularities in the set. Hence, for a
  $C^2$ vector field $G\in\U$ (these form a $C^1$ dense subset of
  $\U$; see e.g. \cite[Chapter 0]{PM82}) we can apply
  Theorem~\ref{mthm:numberSRB} to obtain that the number of ergodic
  physical measures supported in $\Lambda=\Lambda_G(U)$ is bounded by
  twice the number of singularities in $\Lambda$.
\end{proof}

To finish, we present a proof of Lemma~\ref{le:221}.

\begin{proof}[Proof of Lemma~\ref{le:221}]
  Given an ergodic physical probability measure $\mu_1$ so that
  $\supp\mu_1\subset\Lambda$ and $\sigma\in\supp\mu_1\cap\sing(G)$,
  then Proposition~\ref{pr:densestablesing} implies that the unstable
  manifold of $\mu_1$-a.e.  point crosses the stable manifold of
  $\sigma$ and so accumulates $\sigma$.

  Indeed, ergodic physical measures are (non-uniformly) hyperbolic
  measures and also non-atomic, because they are also SRB measures.
  Hence their support is contained in a homoclinic class of a
  hyperbolic periodic orbit $\cO(p)$; see \cite[Theorem
  15.4.3]{BarPes2007}.  In our singular-hyperbolic setting these
  periodic orbits are hyperbolic saddles with index $1$\footnote{The
    index of a hyperbolic periodic orbit is the dimension of the
    stable direction}.
  
  This ensures that for $\mu$-a.e. $x$ there exists
  $p\in\per(G)\cap\Lambda$ so that
  $W^s(p)\pitchfork W^u(x)\neq\emptyset$.  Since
  $W^u(p)\pitchfork W^s(\sigma_0)\neq\emptyset$, the Inclination Lemma
  now ensures that we also have
  $W^u(x)\pitchfork W^s(\sigma)\neq\emptyset$.

  Thus we find a center unstable leaf $W_1$ through a $\mu_1$ generic
  point which transversely crosses the local stable manifold of
  $\sigma$ and so, by the Inclination Lemma, become arbitrarily
  close to $\sigma$.

  According to Remark~\ref{rmk:notLorenzlike}, singular-hyperbolic
  attracting sets satisfy $W^{ss}(\sigma)\cap\Lambda=\{\sigma\}$ for
  all Lorenz-like singularities $\sigma\in\sing(G)$. Then the
  accumulation of $\sigma$ by the trajectory of $W_1$ is performed along the
  one-dimensional weak-stable direction of $\sigma_0$; see the left
  hand side of Figure \ref{fig:transversalWs}.

  We can repeat the above arguments for any other ergodic physical
  measure $\mu_2$ obtaining a center unstable leaf $W_2$ through a
  $\mu_2$ generic point which also transversely crosses the stable
  manifold of some Lorenz-like equilibria $\sigma'$.

  If $\sigma=\sigma'$, then the future trajectories of $W_i,i=1,2$
  both accumulate $\sigma$ and so accumulate each other; see Figure
  \ref{fig:transversalWs}.

\begin{figure}[htpb]
  \centering
  \includegraphics[width=10cm]{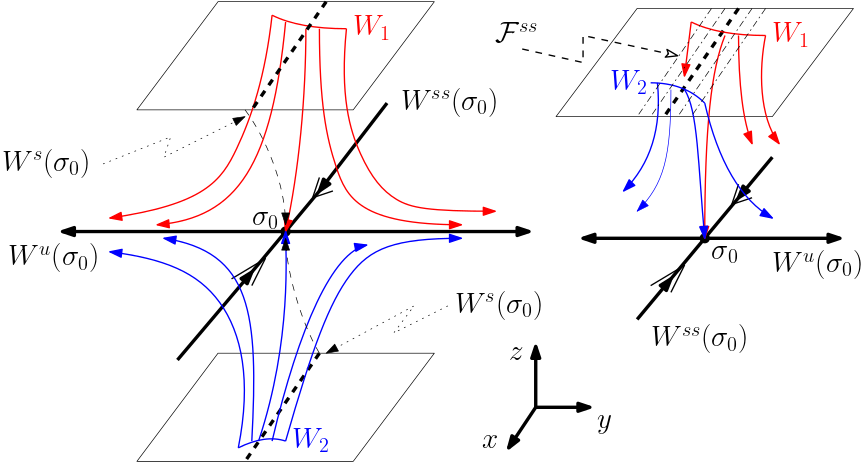}
  \caption{\label{fig:transversalWs} On the left hand side:
    the unstable manifolds $W_1,W_2$ associated to generic
    points of $\mu_1,\mu_2$ on opposite sides of
    $W^s(\sigma_0)$. On the right hand side: the unstable
    manifolds $W_1,W_2$ associated to generic points of
    $\mu_1,\mu_2$ on the same side of $W^s(\sigma_0)$ and a
    cross-section to the flow very close to $\sigma_0$. }
\end{figure}

We note that the local strong-stable manifold $W^{ss}_{loc}(\sigma_0)$
divides the local stable manifold $W^s_{loc}(\sigma_0)$ in two
``sides''.  More precisely, let $h:B(\sigma_0,r)\to T_{\sigma_0}M$ for
some $r>0$ be the homeomorphism conjugating
$h\circ\phi_t=e^{t DG_{\sigma_0}}\cdot h$ the flow $\phi_t$ of $G$
with the linear flow given by $DG_{\sigma_0}$ according to the
Hartman-Grobman Theorem; see e.g. \cite{PM82}. By a linear change of
coordinates on $T_{\sigma_0}M$, we may assume without loss of
generality that the eigenspaces associated to the eigenvalues
$\lambda_1\le\lambda_2<0<\lambda_3$ are respectively the $x$, $z$ and
$y$ axis, as depicted in Figure~\ref{fig:transversalWs}. Then we set,
for a small enough $\rho>0$
\begin{align*}
  B(\sigma_0)^+=h^{-1}\big(B(0,\rho)\cap\{z>0\}\big)
  \qand
  B(\sigma_0)^-=h^{-1}\big(B(0,\rho)\cap\{z<0\}\big)
\end{align*}
the ``top'' and ``down'' sides of a neighborhood of the singularity
$\sigma_0$.

Now we observe that
\emph{if $\sigma_{\text{0}}$ is accumulated by $W_1$ and $W_2$ along
  the same side of the weak-stable direction}, as in the right hand
side of Figure~\ref{fig:transversalWs} then, since
\begin{itemize}
\item both unstable leaves belong to $\Lambda$ (because it
  is an attracting set), and
\item the stable leaves through points of $\Lambda$ have
  uniform size (by Proposition~\ref{prop:Ws}),
\end{itemize}
we conclude that the stable foliation $\cF^{ss}$ of
$\Lambda$ transversely crosses both leaves near $\sigma_0$.


Moreover, it is well-known that the stable foliation for $C^2$
partially hyperbolic flows is absolutely continuous (see
e.g. \cite{PS82} and \cite{ArMel17})), and so a positive Lebesgue
measure subset of $W_1$ will be sent through the stable holonomy to a
positive Lebesgue measure subset of $W_2$.  Since $\mu_i$ is an
ergodic SRB measure, a full measure subset of $W_i$ is formed by
$\mu_{\text{i}}$-generic points, $i=1,2$.  Then we can find a
$\mu_1$-generic point $x_1$ and a $\mu_2$-generic point $x_2$ in the
same stable leaf. This implies that $\mu_1=\mu_2$.

\emph{The only possible way to avoid this phenomenon} is for
$W_1,W_2$ to accumulate $\sigma_0$ along different sides of
the local stable manifold of $\sigma_0$, as in the left hand
side of Figure~\ref{fig:transversalWs}.

However, if there existed still another ergodic physical measure
$\mu_3$ whose support contains the same equilibrium $\sigma_0$, then
some pair of the measures $\mu_i, i=1,2,3$ would accumulate $\sigma_0$
through central-unstable leaves along the same side of $\sigma_0$, and
so this pair of measures would coincide.

This shows that at most a pair of distinct ergodic physical measures
can share a singularity on their respective supports. The proof of the
lemma is complete.
\end{proof}

\begin{figure}[htpb]
  \centering
  \includegraphics[width=8cm]{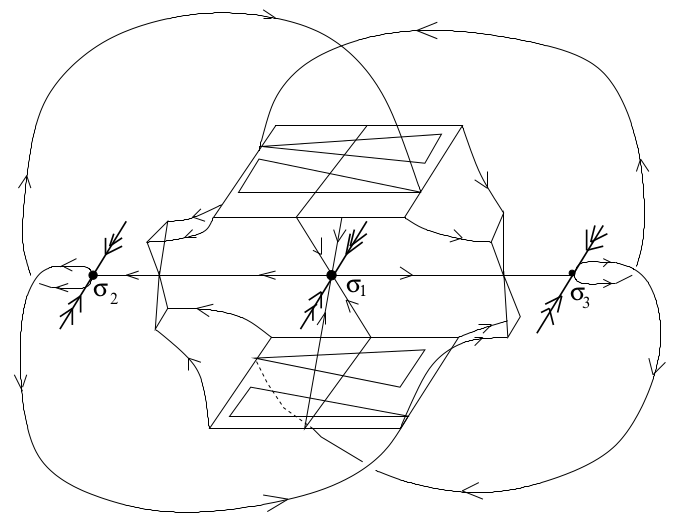}
  \caption{\label{fig:trans2sides}Singular-hyperbolic attractor whose
    singularities are accumulated by both sides by the support of the
    unique physical measure.}
\end{figure}

\begin{remark}\label{rmk:doubleacc}
  There are examples of singular-hyperbolic attractors with
  Lorenz-like singularities accumulated on both sides; see e.g.  the
  proof of~\cite[Theorem B, pg. 345-346]{Morales07} and
  Figure~\ref{fig:trans2sides}.
\end{remark}



\def\cprime{$'$}

\bibliographystyle{abbrv}


\end{document}